\documentclass{article}
\usepackage{graphicx}
\usepackage{tikz}

\usepackage{amsmath}
\usepackage{amssymb}
\usepackage{mathrsfs}
\usepackage[margin=1.5in]{geometry}
\newcommand{\R}{\mathbb{R}}

\newcommand{\p}{\partial}

\newcommand{\eps}{\varepsilon}
\newcommand{\jbrac}[1]{\langle #1 \rangle}
\newcommand{\ip}[1]{\left(#1\right)}

\newcommand{\les}{\lesssim}
\newcommand{\wkly}{\rightharpoonup}

\newcommand{\Imag}{\operatorname{Im}}

\usepackage{graphicx}
\usepackage{amsthm}
\usepackage{xcolor}
\usepackage{hyperref}

\newtheorem{lem}{Lemma}
\newtheorem{thm}{Theorem}
\newtheorem{prop}{Proposition}
\newtheorem{remark}{Remark}
\newtheorem{cor}{Corollary}
\newtheorem{defn}{Definition}

\usepackage[backend=biber, style=nature, sorting=ynt]{biblatex}

\title{Modified scattering for semiclassical Bose-Fermi mixtures}
\author{Kieran Cavanagh\footnote{Department of Mathematics, Pennsylvania State University, University Park, PA, USA. Email: kvc5984@psu.edu}}
\date{September 2026}

\begin{document}

\maketitle

\begin{abstract}
    The Vlasov-Hartree system models a mixture of bosons and fermions interacting via Coulomb forces in which the bosons are described quantum-mechanically, while the fermions are described classically. For small initial data, we prove that the system exhibits modified scattering, i.e. the bosonic and fermionic subsystems each converge to solutions of the free Schr\"odinger and Vlasov equations respectively, but with coupled logarithmic corrections owing to the long-range nature of Coulomb interactions. We use a mixed Lagrangian and Fourier-based approach to derive explicit descriptions of the asymptotic dynamics of the solution and associated fields and Lagrangian trajectories. Notably, due to our purely Lagrangian analysis on the Vlasov side, we require no derivatives on the initial fermion density, the asymptotic fermionic profile is as regular as the initial data, and convergence occurs in almost the same space as the initial data.
\end{abstract}

\tableofcontents

\section{Introduction}

\subsection{The Vlasov-Hartree system}

In this paper, we study the asymptotic behavior of the Vlasov-Hartree system, which is a mean-field model for a mixture of infinitely many bosons and fermions at zero temperature interacting via a potential $V$, in which the bosons are described quantum-mechanically and the fermions are described classically. The system reads

\begin{equation}\label{VH}
\begin{split}
    \partial_t f + v\cdot\nabla_x f + E\cdot\nabla_v f = 0, \quad E(t,x) = -\nabla_x V*|\phi(t,x)|^2,\\
    i\partial_t \phi + \tfrac12 \Delta_x \phi = (V*\rho)\phi, \quad \rho(t,x) = \int f(t,x,v)dv.
\end{split}
\end{equation}

Here, $(x,v)\in \R^3\times\R^3$ and $f$ is the classical distribution function of the fermions, while $\phi$ is the effective wavefunction for the bosons. The field $E$ generated by the bosons exerts a force on the fermions, while the potential $V*\rho$ generated by the fermions drives the dynamics of the bosons. In this sense, only fermion-boson interactions are considered, as fermion-fermion and boson-boson interactions as modeled by the Vlasov-Poisson or Hartree equations respectively have already been considered extensively in the literature. Moreover, we consider long-range interactions, given by the Coulomb potential $$V(x) = \frac{\gamma}{4\pi|x|}, \quad \gamma \in \{-1,1\}.$$
When $\gamma=1$, the interaction is repulsive, and when $\gamma=-1$ the interaction is attractive. Because we will work in a perturbative regime where conservation laws play a minimal role, we set $\gamma=1$ from here on. All results will hold in an analogous fashion when $\gamma=-1$.\\

Recently, C\'ardenas, Miller, and Pavlovi\'c \cite{cardenas_effective_2025} derived the Vlasov-Hartree system from the finite particle system in a particular nontrivial scaling regime where the bosons are much lighter than the fermions, but are more numerous. To be precise, if $N$ is the number of bosons, $M$ is the number of fermions, and $m_B,m_F$ are the masses of the bosons and fermions respectively, they prove that under the scaling
$$\frac{M}{N} = \frac{m_B}{m_F} = \hbar,$$
the Vlasov-Hartree system arises from the $N,M\to \infty$, $\hbar\to 0$ limit. For more physical background on the Vlasov-Hartree system, we refer to \cite{cardenas_effective_2025} and our earlier work \cite{cavanagh_global_2026}.\\

From a mathematical perspective, this system is an example of a kinetic transport equation coupled to a dispersive equation, which appear in many other contexts, including relativistic plasmas (e.g. the Vlasov-Maxwell system) and general relativity (e.g. the Vlasov-Einstein system). Understanding the asymptotic dynamics of these equations is often a difficult task requiring delicate analysis. One might expect solutions to scatter, i.e. for the asymptotic behavior to be well-approximated as $t\to\infty$ by \emph{free solutions} (solutions in which the particles experience no force) but for long-range potentials, such as the Coulomb potential that we consider in this paper, the particles do not disperse fast enough, causing deviation from the free dynamics. Nonetheless, in many cases, including for the present system, one can establish convergence to an asymptotic dynamics modulo a correction, which is known as \emph{modified scattering}.\\

The Cauchy problem for the Vlasov-Hartree system has only very recently received attention in \cite{cavanagh_global_2026} in which global well-posedness and time decay estimates are established for large data and the very recent preprint \cite{huang_long-time_2026} in which similar results to the present paper are obtained using different methods. For the purposes of motivating the main results we will mainly discuss the closely related Vlasov-Poisson and Hartree equations, since their analysis forms the basis of the methods of the present paper. The former equation describes either a self-gravitating system of particles governed by Newton's laws or an electrostatic plasma, while the latter describes a Bose-Einstein condensate.\\

\textbf{a. The Hartree equation.} The global well-posedness theory was first developed by Chadam and Glassey \cite{chadam_global_1975}, and the impossibility of linear scattering was established by Glassey in \cite{glassey_asymptotic_1977}. For long range interactions, Hayashi and Tsusumi \cite{hayashi_scattering_1987} established the linear scattering theory for short-range interactions. Later, for Coulomb interactions, Ginibre and Ozawa \cite{ginibre_long_1993} first constructed modified wave operators --- operators which map (modified) asymptotic states to initial data --- while Hayashi and Naumkin \cite{hayashi_asymptotics_1998} were the first to prove the sharp $L^\infty$ decay estimate for solutions alongside modified scattering. The works of Ginibre and Velo \cite{ginibre_long_2000} and Nakanishi \cite{nakanishi_modified_2002} further developed the scattering theory. Later, Kato and Pusateri \cite{kato_new_2011} gave a new proof of the results of \cite{hayashi_asymptotics_1998} inspired by the spacetime resonance method.\\

\textbf{b. The Vlasov-Poisson system.} Global well-posedness for large initial data was established by Pfaffelmoser \cite{pfaffelmoser_global_1992} and Perthame and Lions \cite{lions_propagation_1991}, while small data global well-posedness with sharp decay estimates was established by Bardos and Degond \cite{bardos_global_1985}. The modified scattering theory for the Vlasov-Poisson system came much later from Choi and Kwon \cite{choi_modified_2016}, and later works from Ionescu, Pasauder, Widmayer, and Wang \cite{ionescu_asymptotic_2022} and Pankavich \cite{pankavich_asymptotic_2022} derived more explicit descriptions of the asymptotic dynamics. Interestingly, in contrast to Hartree, that solutions to Vlasov-Poisson decay at the same rate as free solutions was known much earlier, by Bardos and Degond \cite{bardos_global_1985} in 1985, with extensions in \cite{hwang_optimal_2011}, \cite{smulevici_small_2016}, \cite{duan_sharp_2023}, \cite{wang_decay_2023}, but proving modified scattering seemed to be harder than for Hartree, largely due to the difficulty in identifying the proper long-range corrections. Most of the above results apply only in the case of small initial data, except \cite{pankavich_asymptotic_2022} in which an apriori decay assumption on the electric field $E$ (without smallness) is sufficient to obtain modified scattering. This assumption is known to hold for small solutions or for spherically symmetric solutions \cite{pankavich_exact_2021}. The past several years has seen a flurry of work on the asymptotic behavior of the Vlasov-Poisson system, for instance see \cite{flynn_scattering_2023}, \cite{schlue_inverse_2025}, \cite{bigorgne_homeomorphic_2026} for the construction of modified wave operators, \cite{pausader_stability_2021}, 
\cite{bigorgne_modified_2025}, \cite{kepka_modified_2025}, \cite{huang_scattering_2026}, \cite{pausader_stability_2026} for the study of asymptotic behavior under the influence of a point charge or external potential, \cite{huang_scattering_2026-1}, \cite{hong_modified_2026} for very long-range interactions, and \cite{ionescu_stability_2023}, \cite{ionescu_nonlinear_2024} for stability of spatially homogeneous steady states in $\R^3$.

\subsection{Main results} 

The goal of this work is to study the asymptotic behavior of the Vlasov-Hartree system in the small initial data regime. Importantly, we consider data with possibly \emph{discontinuous} initial fermion density $f_0$. Ground states (specifically, steady states which minimize energy under the Pauli exclusion principle $0\leq f\leq 1$) for the fermionic component of the Vlasov-Hartree system are in general indicator functions due to the fact that the system is at zero temperature; see for instance \cite{fournais_semi-classical_2018} or \cite{cardenas_effective_2025}. As a result, a low-regularity setting is the most physically natural one and enables future study of the stability of such ground states.\\

Our first result concerns decay estimates for small data, in which we obtain sharp decay estimates. 

\begin{thm}[Decay estimates]\label{decay}
    Suppose $f_0\in L^\infty_{x,v}$ with compact support and $\phi_0\in H^s, |x|^s\phi_0\in L^2$ for some $s\in (\frac32,2)$. There exists $\eps_0$ sufficiently small such that if
    $$\|f_0\|_{L^1_{x,v}\cap L_{x,v}^\infty}  + \|\phi_0\|_{H^s_x} + \||x|^s\phi_0\|_{L^2_x} \leq \eps_0,$$
    then the densities decay at the same rate as free solutions, i.e.
    $$\|\rho(t)\|_{L^\infty}\les \eps_0 \jbrac{t}^{-3}, \quad \|\phi(t)\|_{L^\infty}\les \eps_0 \jbrac{t}^{-3/2},$$
    which in particular implies the following decay rates on the fields:
    \begin{align*}
        \||\nabla|^r V*\rho(t)\|_{L^\infty} &\les \eps_0 \jbrac{t}^{-1-r} \quad  \forall r\in [0,2),\\
        \|E(t)\|_{L^\infty} &\les \eps_0^2 \jbrac{t}^{-2}, \quad \|\nabla_x E(t)\|_{L^\infty} \les \eps_0^2 \jbrac{t}^{-3}\ln\jbrac{t}.
    \end{align*}
\end{thm}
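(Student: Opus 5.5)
We argue by a bootstrap (continuity) argument on a time interval $[0,T]$. The bootstrap assumptions are
$$\|\rho(t)\|_{L^\infty}\le 2C\eps_0\jbrac{t}^{-3},\qquad \|\phi(t)\|_{L^\infty}\le 2C\eps_0\jbrac{t}^{-3/2},$$
together with a weighted-energy bound on the boson profile $g(t)=e^{-it\Delta/2}\phi(t)$. Concretely, we would assume
$$\|g(t)\|_{H^s}+\||x|^s g(t)\|_{L^2}\le 2C\eps_0\jbrac{t}^{\delta}$$
for some small $\delta>0$; since $|x|^s g = e^{-it\Delta/2}|J|^s\phi$ with $J=x+it\nabla$, this is an estimate on the vector field $J$. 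We then close each assumption with constant $C$ once $\eps_0$ is small.

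\textbf{Field bounds.} The field estimates follow from the density bounds. For the fermions, mass conservation gives $\|\rho\|_{L^1}\les\eps_0$, and the bootstrap gives $\|\rho\|_{L^\infty}\les\eps_0\jbrac{t}^{-3}$. Splitting the kernel of $|\nabla|^r V$ (homogeneous of degree $-1-r$) at radius $R\sim\jbrac{t}$ yields
$$\||\nabla|^rV*\rho\|_{L^\infty}\les \|\rho\|_\infty R^{2-r}+\|\rho\|_{L^1}R^{-1-r}\les\eps_0\jbrac{t}^{-1-r}, \qquad r<2.$$
For the bosons, $|\phi|^2$ has $L^1$ norm $\les\eps_0^2$ (mass conservation) and $L^\infty$ norm $\les\eps_0^2\jbrac{t}^{-3}$. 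The same splitting gives $\|E\|_\infty\les\eps_0^2\jbrac{t}^{-2}$. For $\nabla E$, the kernel is a Calderón–Zygmund kernel of degree $-3$. We split into $|y|<\jbrac{t}^{-1}$, $\jbrac{t}^{-1}<|y|<\jbrac{t}$, and $|y|>\jbrac{t}$. The middle region gives $\eps_0^2\jbrac{t}^{-3}\ln\jbrac{t}$. The inner region needs a little Hölder regularity of $|\phi|^2$, which we get from $\|\nabla\phi\|_\infty$ via the dispersive/vector field bound (or, crudely, from interpolation with $H^s$, $s>3/2$, since the polynomial loss is absorbed by the tiny radius). The outer region is controlled by $L^1$.

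\textbf{Fermion decay.} We would use a purely Lagrangian argument, with no derivatives on $f_0$. Let $(X,V)(t;x,v)$ be the characteristics. Since $\|E\|_\infty\les\eps_0^2\jbrac{t}^{-2}$ is integrable, $V(t)\to V_\infty$ with $|V(t)-v|\les\eps_0^2$ and $|X(t)-x-tv|\les\eps_0^2\jbrac{t}$ up to a logarithm. The key step is the Jacobian estimate $\det\nabla_v X(t)\gtrsim t^3$. Write $\nabla_vX=t(I+M)$; differentiating the characteristic ODE gives
$$|M(t)|\les \int_0^t s\,\|\nabla E(s)\|_\infty\,ds\les\eps_0^2\int_0^t \jbrac{s}^{-2}\ln\jbrac{s}\,ds\les\eps_0^2 .$$
Then $\rho(t,x)=\int f_0(X(-t)\ldots)\,dv$. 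Changing variables $v\mapsto X(t;x_0,v)$ at fixed $x_0$, and using the compact support of $f_0$ in $x$, gives $\rho\les\|f_0\|_\infty t^{-3}$, which improves the bootstrap for $t\ge1$; for $t\le1$ the bound is trivial.

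\textbf{Boson decay (main obstacle).} This is the hard step, since the potential $V*\rho$ decays only like $\eps_0/t$, which is critical and produces logarithmic phase growth. Following Kato–Pusateri/Hayashi–Naumkin, we use the factorization
$$\phi(t)=\frac{e^{i|x|^2/2t}}{(it)^{3/2}}\hat g(x/t)+O\bigl(t^{-3/2-\alpha}\|\,|x|^s g\|_{L^2}\bigr), \qquad \alpha<s-3/2 .$$
So it suffices to bound $\|\hat g\|_{L^\infty}$ uniformly in $t$. We derive the ODE
$$i\p_t\hat g(t,\xi)=t^{-1}\,\Psi(t,\xi)\,\hat g(t,\xi)+R(t,\xi),$$
where $\Psi(t,\xi)=t(V*\rho)(t,t\xi)$ is real and bounded. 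The remainder satisfies $|R|\les\eps_0 t^{-1-\beta}$, using $\||\nabla|^rV*\rho\|_\infty\les\eps_0 t^{-1-r}$ for some $r>0$ together with the weighted bound on $g$. Since $\Psi$ is real, $|\hat g|$ is controlled by integrating out the phase, giving $\|\hat g(t)\|_\infty\les\eps_0$.

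The weighted energy bound comes from commuting $J$ with the equation. We have $[J,V*\rho]=it\nabla V*\rho$, bounded by $\eps_0$ times $t\cdot t^{-2}=\eps_0t^{-1}$, which gives the slow $t^{\delta}$ growth. Fractional $s$ would be handled through $|J|^s=M(t)|t\nabla|^sM(-t)$. I expect the remainder estimate $R$ with fractional weights, and checking that $\Psi$ varies slowly enough in time, to be the delicate part.
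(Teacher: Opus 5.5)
Your outline is sound, but its central step takes a genuinely different route from the paper's.

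\textbf{The paper's route.} The paper bootstraps only on the decay of $E$ and $\nabla_x E$, and checks that these are continuous in $t$. It gets $\rho$ and $V*\rho$ decay from the Jacobian bound along backward characteristics. It gets $\ln^2\langle t\rangle$ growth of $\||J|^s\phi\|_{L^2}$ from a commutator estimate whose top-order term cancels because $V*\rho$ is real. It does not use the factorization $e^{it\Delta/2}=M(t)D(t)\mathcal{F}M(t)$. Instead it computes the Duhamel term in Fourier space through the transport structure. The leading term is $t^{-1}(V*m)(t,\xi)\hat\psi$, with $m$ the scattering mass. The remainder is controlled by mixed moments of the Vlasov profile $f(t,x+vt,v)$ and of $\psi$ (Lemma \ref{remest}).

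\textbf{Why your route closes.} Your rest-frame phase $\Psi(t,\xi)=t(V*\rho)(t,t\xi)$ differs from $(V*m)(t,\xi)$ only by $O(t^{-1}\ln^5 t)$ (Proposition \ref{asympfermpot}). Write $g$ for your profile (the paper's $\psi$). The remainder is bounded by $\eps_0 t^{-1-\alpha}\|\langle x\rangle^s g\|_{L^2}$ for $\alpha<\frac12 s_{gap}$. This uses $|e^{ia}-1|\le 2|a|^\alpha$ as in Lemma \ref{fraunhofer} and a fractional Leibniz rule in $H^s_\xi$, provided
\[
\|(V*\rho)(t,t\,\cdot)\|_{L^\infty}+\||\nabla_\xi|^s[(V*\rho)(t,t\,\cdot)]\|_{L^\infty}\les\eps_0 t^{-1}.
\]
So you need the derivative decay for $r$ up to $s$, not merely for some $r>0$. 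Lemma \ref{potdecayest} supplies this, since $s<2$. Also, no slow time-variation of $\Psi$ is needed for the decay bound. Reality of $\Psi$ alone makes the integrating factor unimodular.

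\textbf{What each route buys.} Yours is the classical Hayashi--Naumkin argument, and it sees the fermions only through decay of $V*\rho$ and its derivatives. The paper's route gives a phase expressed directly through $V*m$. That is exactly what produces the $\ln(t)(V*m_\infty)$ correction and the $C^\delta$ profile convergence in Theorem \ref{modscat}. It also gives logarithmic growth of the weighted norm. A crude Gronwall on $|J|^s$, which is what your plan uses, gives only $t^{C\eps_0}$ growth; that suffices here only because of smallness.

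\textbf{Two minor repairs.} First, $\|\nabla\phi\|_{L^\infty}$ is not available for $s<2$. In the $\nabla E$ bound, use the $C^{s-3/2}$ regularity with inner radius $\langle t\rangle^{-N}$ instead. Second, the bootstrap hypothesis on $\|\rho\|_{L^\infty}$ is redundant, since you rederive it from the field bounds.
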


Our next result concerns the asymptotic behavior. We show that both the bosonic and fermionic subsystems exhibit modified scattering, and we give explicit descriptions of the asymptotic dynamics and long-range corrections. In order to state the results we need some notation. First, define
\begin{equation}\label{sgap}
    s_{gap} = s-\frac32 > 0.
\end{equation}

The quantity $s_{gap}$ measures the amount that $s$ is above the Sobolev embedding threshold and also is the H\"older regularity of the data for $\phi$ by Morrey's inequality.\\

We also define the \emph{scattering mass} or \emph{spatial average} $m(t,v)$ (see \cite{ionescu_asymptotic_2022}, \cite{pankavich_asymptotic_2022}), which will play an important role in the description of the asymptotic dynamics, as it determines how much average force is exerted on a boson with velocity $v$.

$$m(t,v) := \int f(t,x,v)dx.$$

We introduce the Lagrangian flow map 
$$\Phi_{t,s}(x,v):=(X(t,s,x,v),V(t,s,x,v)),$$
where $X,V$ solve the characteristic ODEs (\ref{charodes}), and let $\mathcal{S}_t\subset \R^6$ be the support of $f(t)$.

\begin{thm}[Modified scattering]\label{modscat}

Let $\delta<\min(\frac13 s_{gap},\frac18)$. Under the hypotheses of Theorem \ref{decay},\\ 

    (i) (Hartree asymptotics) There exists an asymptotic profile $\phi_\infty\in L^1\cap L^\infty \cap C^\delta$, a real phase function $\Psi_\infty\in L^\infty$, and $m_\infty \in L^\infty$ with compact support such that
    \begin{equation}\label{phiasymp}
        \Big\|\phi(t,x) - \frac{e^{i|x|^2/2t}}{(it)^{3/2}} \phi_\infty\big(\tfrac{x}{t}\big) \exp\left(-i\Psi_\infty\big(\tfrac{x}{t}\big) - i\ln(t)(V*m_\infty)\big(\tfrac{x}{t}\big) \right)\Big\|_{L^p_x} \les \eps_0 t^{-\delta - 3(\frac12-\frac1p)},
    \end{equation}
    for any $2\leq p\leq \infty$. Also, the potential $V*\rho$ driving the boson dynamics has the self-similar expansion
    \begin{equation}\label{VrhoSSasymp}
        \big\|(V*\rho)(t,x) - t^{-1}(V*m_\infty)\big(\tfrac{x}{t}\big)\big\|_{L^\infty_x} \les \eps_0 t^{-2}\ln^5\jbrac{t},
    \end{equation}
    and $m_\infty$ arises as the weak limit in the sense of measures of $m(t)$:
    \begin{equation}\label{masymp}
        m(t,v)dv\wkly m_\infty(v)dv \quad \text{ as } t\to\infty.
    \end{equation}

    (ii) (Vlasov asymptotics) Let $E_\infty(v)=-(\nabla V*|\phi_\infty|^2)(v)$ be the asymptotic field profile generated by the bosons and $$\mathcal{M}_t(x,v)=(x+vt-\ln(t)E_\infty(v),v)$$
    the modified free flow. Then, for any $s>0$ there exists an asymptotic flow map $\mathcal{A}_{\infty,s}:\mathcal{S}_s\to\mathcal{A}_{\infty,s}(\mathcal{S}_s)$ which is a $C^1$ measure-preserving diffeomorphism such that
    \begin{equation}\label{Aasymp}
        \|\mathcal{A}_{\infty,s}-\mathcal{M}_t^{-1}\circ\Phi_{t,s}\|_{L^\infty_{x,v}}\les t^{-\delta},
    \end{equation}
    and an asymptotic profile $f_\infty\in L^\infty_{x,v}$ with compact support defined by
    $$f_\infty = f(s)\circ \mathcal{A}_{\infty,s}^{-1}$$
    such that
    \begin{equation}\label{fasymp}
        \lim_{t\to\infty}\|f(t)\circ \mathcal{M}_t-f_\infty\|_{L^p_{x,v}}=0, \quad \forall 1\leq p < \infty.
    \end{equation}
    Moreover, the force acting on the fermions satisfies
    \begin{equation}\label{Easymp}
        \|E(t,x) - t^{-2}E_\infty\big(\tfrac{x}{t}\big)\|_{L^\infty_x} \les \eps_0^2 t^{-2-\delta},
    \end{equation}
    and the spatial fermion density satisfies
    \begin{equation}\label{rhoasymp}
        t^3 \rho(t,tv)dv \wkly m_\infty(v)dv \quad \text{ as } t\to\infty,
    \end{equation}
    weakly in the sense of measures.
\end{thm}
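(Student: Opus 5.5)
The proof takes Theorem \ref{decay} as given and treats the two subsystems in turn, feeding asymptotic information across. First comes the fermionic side. I would integrate the characteristic ODEs (\ref{charodes}) using $\|E(t)\|_{L^\infty}\les\eps_0^2\jbrac{t}^{-2}$. This shows that $V(t,s,x,v)$ converges as $t\to\infty$ at rate $t^{-1}$, and that $X(t)-tV(t)$ grows at most logarithmically. Then $m(t,v)=\int f(t,x,v)\,dx$ is the pushforward of $f(s)$ under $V(t,s,\cdot)$, up to the $x$-marginal. Hence $m(t)dv\wkly m_\infty dv$, where $m_\infty$ is the pushforward of $f(s)$ under $V_\infty=\lim_t V(t,s,\cdot)$. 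This limit map is measure-preserving in the sense needed, and $m_\infty\in L^\infty$ follows once $V_\infty$ is shown to be a near-identity diffeomorphism in $v$ for fixed $x$ (using $\nabla E$ decay, $\int t\,\|\nabla E\|\,dt<\infty$ after the $t^{-3}\ln t$ bound).

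For (\ref{VrhoSSasymp}), I would write
\[
(V*\rho)(t,x)=\int V\bigl(x-X(t,s,y,w)\bigr)f(s,y,w)\,dy\,dw,
\]
expand $X=tV_\infty+O(\ln^k t)$, and compare with $t^{-1}(V*m_\infty)(x/t)$. The key point is that $V*$ (Coulomb) of a bounded compactly supported measure is Lipschitz-like up to logarithms, which yields $t^{-2}\ln^5 t$. I would use the standard trick of splitting near and far from the singularity of $|x|^{-1}$ at scale $t^{-1}$ (after rescaling) to handle the non-Lipschitz kernel with only $L^\infty$ density.

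The bosonic side follows Kato--Pusateri. I would set $u=e^{-it\Delta/2}\phi$, so $\hat u$ solves
\[
i\partial_t\hat u=\mathcal F\bigl(e^{-it\Delta/2}(V*\rho)\phi\bigr),
\]
and use the factorization $e^{it\Delta/2}=M_tD_t\mathcal F M_t$. With (\ref{VrhoSSasymp}) in hand, the nonlinearity equals $t^{-1}(V*m_\infty)(\xi)\hat u(t,\xi)$ plus an error that is $O(t^{-1-\delta})$ in $L^\infty_\xi$. The error is controlled through the weighted norm $\||x|^s u\|_{L^2}$, which grows slowly (bootstrapped in Theorem \ref{decay}), interpolated with the $C^{s_{gap}}$ Hölder regularity from Morrey. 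This is where $\delta<s_{gap}/3$ enters. I would then integrate the phase: $\hat u(t)\exp\bigl(i\ln t\,(V*m_\infty)\bigr)$ is Cauchy, and the residual phase contribution $\int^t \bigl((V*\rho)-\text{model}\bigr)$ converges to give $\Psi_\infty$. This yields $\phi_\infty$ and (\ref{phiasymp}) in $L^\infty$; the $L^2$ rate comes from the weighted bound, and interpolation gives $2\le p\le\infty$. Feeding $|\phi|^2\approx t^{-3}|\phi_\infty(x/t)|^2$ into $E=-\nabla V*|\phi|^2$ gives (\ref{Easymp}), with rate $t^{-\delta}$ from the profile error and the Coulomb kernel's scaling.

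Finally the Vlasov asymptotics follow from (\ref{Easymp}). Along characteristics,
\[
\dot V=t^{-2}E_\infty(X/t)+O(t^{-2-\delta}), \qquad X/t=V_\infty+O(t^{-1}\ln t),
\]
so $V=V_\infty-t^{-1}E_\infty(V_\infty)+O(t^{-1-\delta})$ and $X=tV_\infty-\ln t\,E_\infty(V_\infty)+X_\infty+O(t^{-\delta})$. Thus $\mathcal A_{\infty,s}:=(X_\infty,V_\infty)$ satisfies (\ref{Aasymp}). Measure preservation passes to the limit from Liouville, and the $C^1$ property follows from differentiating the ODE, using $\nabla E$ decay and $\nabla E_\infty$ bounded because $\phi_\infty\in C^\delta\cap L^1\cap L^\infty$. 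For (\ref{fasymp}) with only $f_0\in L^\infty$, I would approximate $f(s)$ in $L^p$ by continuous functions. For continuous data, uniform convergence of the flow maps gives convergence, and the measure-preserving property gives a uniform-in-$t$ $L^p$ bound on the difference; the density argument then concludes. (\ref{rhoasymp}) then follows by testing $t^3\rho(t,tv)$ against continuous functions and changing variables through the flow. I expect the main obstacle to be (\ref{VrhoSSasymp}) with merely bounded discontinuous $f$: the Coulomb kernel's singularity must be handled without derivatives of $f$, relying only on the Lagrangian description and Jacobian bounds of the flow.
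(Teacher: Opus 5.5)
The genuine gap is the $C^1$ claim, and hence the diffeomorphism claim, for $\mathcal{A}_{\infty,s}$ in (ii). You say it follows by differentiating the characteristic ODEs using the decay of $\nabla_x E$ and boundedness of $\nabla E_\infty$. Those ingredients are not enough, because $\|\nabla_x E(t)\|_{L^\infty}\les\eps_0^2\jbrac{t}^{-3}\ln\jbrac{t}$ is critical for derivatives of the renormalized position. Since $|\nabla_v X(t)|\sim t$,
\[
\tfrac{d}{dt}\nabla_v\big(X(t)-tV(t)\big) = -t\,(\nabla_x E)(t,X(t))\,\nabla_v X(t) = O(t^{-1}\ln t),
\]
which is not integrable; this is the $\ln^2\jbrac{t}$ growth in Lemma \ref{Ybds}. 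The derivatives of the first component of $\mathcal{M}_t^{-1}\circ\Phi_{t,s}$ converge only because this term cancels against $\frac{d}{dt}$ of the $\ln(t)\,\nabla_{x,v}[E_\infty(\cdot)]$ correction. Seeing that cancellation requires the leading-order law $(\nabla_x E)(\tau,X(\tau))=\tau^{-3}\nabla E_\infty(V_\infty)+O(\tau^{-3-\delta})$, which needs two things you do not supply.

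First, you need a gradient analogue of (\ref{Easymp}), namely the second estimate of Proposition \ref{Eerrest}. It does not follow from (\ref{Easymp}), since $\nabla^2V*$ is not bounded on $L^\infty$. The paper gets it from $C^\delta$ convergence of $|\hat\psi(t)|^2$ to $|\phi_\infty|^2$ via (\ref{holderest}), i.e.\ H\"older-level convergence of the modified profile. Your Hartree argument gives only $L^2_\xi\cap L^\infty_\xi$ convergence, and you assert $\phi_\infty\in C^\delta$ without deriving it. This H\"older step, not the $L^\infty$ remainder bound, is where $\delta<\min(\frac13 s_{gap},\frac18)$ enters. You could also obtain it by interpolating your $L^\infty_\xi$ rate against the logarithmically growing $C^{s_{gap}}$ bound on $\hat u$.

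Second, $E_\infty$ is only $C^{1,\delta}$, so differentiating $\ln(t)E_\infty(V(t))$ in $x,v$ and then in $t$ produces $\nabla^2 E_\infty$. The paper avoids this by freezing the velocity: it works with $Y+\ln(t)E_\infty(V_\infty)$ and bounds $\nabla E_\infty(V(\tau))-\nabla E_\infty(V_\infty)$ by $[\nabla E_\infty]_{C^\delta}|V(\tau)-V_\infty|^\delta$. Your $C^0$ expansion already evaluates $E_\infty$ at $V_\infty$, which is the right structure, but the derivative-level estimate is where the work lies.

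Finally, a diffeomorphism also needs global injectivity, which you do not address. Uniform limits of injective maps need not be injective; the paper uses the Brouwer-type Lemma \ref{toplem} together with the Jacobian bound.

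Two smaller points.

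\begin{itemize}
\item Your $\mathcal{A}_{\infty,s}:=(X_\infty,V_\infty)$ is not the limit of $\mathcal{M}_t^{-1}\circ\Phi_{t,s}$. The map $\mathcal{M}_t^{-1}$ subtracts $tV(t)$, not $tV_\infty$, and your own expansion gives $t(V_\infty-V(t))\to E_\infty(V_\infty)$. So the first component of the limit is $X_\infty+E_\infty(V_\infty)$.
\item In the density argument for (\ref{fasymp}), $f(t)\circ\mathcal{M}_t=f(s)\circ\mathcal{A}_{t,s}^{-1}$ involves the inverse maps. Uniform convergence of $\mathcal{A}_{t,s}$ must therefore be transferred to $\mathcal{A}_{t,s}^{-1}$, e.g.\ via uniform Lipschitz bounds on the inverses. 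Alternatively, as the paper does, pair against test functions so only forward maps appear, then upgrade weak to strong $L^2$ convergence using conservation of $\|f(t)\circ\mathcal{M}_t\|_{L^2}$.
\end{itemize}

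The rest of your plan is a legitimate alternative route. Comparing $V*\rho$ with $t^{-1}(V*m_\infty)(x/t)$ directly through the flow works. The ingredients are the displacement $X-tV_\infty=O(\ln t)$, a near/far split at radius $\sim\ln t$, and the $t^{-3}$ bounds on both densities. It even yields a better logarithm than the paper's two-step route through $V*m(t)$, the profile $g$, and Lemma \ref{scatmass}. Likewise, the physical-space factorization argument, fed with (\ref{VrhoSSasymp}) and $\||\nabla|^rV*\rho\|_{L^\infty}\les t^{-1-r}$, does give $i\p_t\hat u=t^{-1}(V*m_\infty)\hat u+O(t^{-1-\delta})$. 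The paper instead extracts $t^{-1}(V*m)(t,\xi)$ by an exact Fourier computation exploiting the transport structure. That computation runs inside the bootstrap before any fermionic asymptotics are known, and its remainder comes with the $C^\delta$ bound (Lemma \ref{remest}) that the derivative step above requires.
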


\begin{remark}
     Smallness of the initial data does not seem to be a structural requirement for Theorem \ref{modscat} to hold; one could assume apriori decay of $E$ and derivatives without smallness assumptions and still obtain modified scattering by using the techniques of Pankavich in \cite{pankavich_asymptotic_2022} to analyze the Vlasov flow, with no modifications to the analysis on the Hartree side. We primarily assume smallness in order to bootstrap the decay of $E$, and because it simplifies the analysis on the Vlasov side. One of the ingredients making smallness not necessary is the logarithmic growth of the weighted norms used to control the error between the actual and approximate dynamics, as opposed to $t^{O(\text{data})}$ growth of the weighted norms as in \cite{hayashi_asymptotics_1998}. For details see Lemma \ref{vlasovweighted} and Proposition \ref{EEhartree}.
\end{remark}
\begin{remark}
    Technically, the only statement requiring $\delta<\min(\frac13 s_{gap},\frac18)$ is $W\in C^\delta$ (see Lemma \ref{remest}); the other statements hold as long as $\delta<\frac12 s_{gap}$.
\end{remark}

We also have the following corollary if $f_0$ is assumed to be more regular. Notably, smallness of $\|f_0\|_{W^{1,\infty}_{x,v}}$ is not necessary.
\begin{cor}\label{higherreg}
    If in addition to the hypotheses of Theorem \ref{decay} and \ref{modscat}, $f_0$ is in $W^{1,\infty}_{x,v}$, then 
    \begin{equation}
        \|m(t)-m_\infty\|_{L^1\cap L^\infty}\les \eps_0^2 \jbrac{t}^{-1}\ln^9\jbrac{t},
    \end{equation}
    and moreover one gets the following asymptotic expansion for $\rho$:
    \begin{equation}
        \big\|\rho(t,x)-t^{-3}m_\infty\big(\tfrac{x}{t}\big)\|_{L^p_x}\les t^{-4+\frac3p} \ln^6\jbrac{t}, \quad 1\leq p \leq \infty.
    \end{equation}
    Finally, convergence to the asymptotic Vlasov profile holds in $L^1_{x,v}\cap L^\infty_{x,v}$ with an explicit rate:
    \begin{equation}
        \|f(t,x+vt-\ln(t)E_\infty(v),v)-f_\infty(x,v)\|_{L^1_{x,v}\cap L^\infty_{x,v}}\les \eps_0^2 \jbrac{t}^{-\delta}.
    \end{equation}
\end{cor}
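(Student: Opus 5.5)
The plan is to exploit the Lagrangian description of $f$ together with the Jacobian control of the flow map $\Phi_{t,s}$, now differentiating the initial datum. We write $m(t,v)=\int f(t,x,v)\,dx$ and, after the change of variables $x\mapsto X(t,0,x,v)$, compare $f(t)\circ\mathcal{M}_t$ with $f_\infty$. The third estimate of the corollary is the basic one and should be proved first. By \eqref{Aasymp} with $s=0$, $\mathcal{M}_t^{-1}\circ\Phi_{t,0}$ is within $t^{-\delta}$ of $\mathcal{A}_{\infty,0}$ in $L^\infty$, and all maps involved are $C^1$ and measure preserving. Using $f(t)=f_0\circ\Phi_{0,t}$ and $f_\infty=f_0\circ\mathcal{A}_{\infty,0}^{-1}$, we get
\[
f(t)\circ\mathcal{M}_t-f_\infty=f_0\circ\big(\mathcal{M}_t^{-1}\circ\Phi_{t,0}\big)^{-1}-f_0\circ\mathcal{A}_{\infty,0}^{-1}.
\]
By the mean value theorem this difference is bounded pointwise by $\|\nabla f_0\|_{L^\infty}$ times the distance between the two inverse maps. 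That distance is $\les t^{-\delta}$, since the inverses inherit the estimate \eqref{Aasymp} via the uniform $C^1$ bounds on $\mathcal{A}_{\infty,0}$, which are available from the proof of Theorem \ref{modscat}. Every function involved is supported in a fixed compact set, because the supports of $f(t)\circ\mathcal{M}_t$ stay bounded (the modified flow undoes the dispersion up to $o(1)$). So the $L^\infty$ bound implies the $L^1$ bound. The factor $\eps_0^2$ comes from writing the deviation of $\mathcal{M}_t^{-1}\circ\Phi_{t,0}$ from its limit as a time integral of $E$ terms of size $\eps_0^2 s^{-1-\delta}$, via \eqref{Easymp}. Since smallness of $\nabla f_0$ is never used, $\|f_0\|_{W^{1,\infty}}$ only enters the implicit constant.

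For $m(t)-m_\infty$ we want a rate better than $t^{-\delta}$, so we will not go through $f_\infty$. Instead we differentiate in time:
\[
\p_t m(t,v)=-\int E\cdot\nabla_v f\,dx=\nabla_v\cdot\int E(t,x)f(t,x,v)\,dx .
\]
We move the derivative onto the characteristics rather than onto $f$. Substituting $x=X(t,0,x_0,v_0)$ and using that on the support $x\approx tv$, we write $E(t,x)=E(t,tv)+O(\|\nabla E\|_{L^\infty}|x-tv|)$. Here $|x-tv|\les\ln t$ on the support by the characteristic estimates. The bound $\|\nabla E\|_{L^\infty}\les\eps_0^2t^{-3}\ln t$ from Theorem \ref{decay} then gives an error of order $t^{-3}\ln^2t$, which is integrable. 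The main term $\nabla_v\cdot(E(t,tv)m(t,v))$ has $E(t,tv)=O(t^{-2})$, which is also integrable. Handling the outer $\nabla_v$ requires bounding $\nabla_v m$ and $\nabla_v$ of the error in $L^1\cap L^\infty$. For this we use $\nabla f_0\in L^\infty$ together with the bounds on $\nabla_{x,v}\Phi$, which grow at most like $t\ln^k t$ in the $x$-direction, since $\nabla_v X\sim t$. Integrating $\p_t m$ from $t$ to $\infty$ then gives $\|m(t)-m_\infty\|\les\eps_0^2t^{-1}\ln^9t$, where $\ln^9$ counts the accumulated logarithms.

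The main obstacle is this bookkeeping of logarithms and the fact that the naive estimate of $\nabla_v$ of the $x$-integral costs a factor $t$. I would first try integrating by parts in $x$ to trade $\nabla_v$ for $t^{-1}\nabla_x$ along free transport: $\nabla_v(f\circ\text{free})=t\nabla_x+\dots$, and the $t\nabla_x$ part integrates to zero in $x$. This keeps the decay at $t^{-2}$ times logarithms.

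Finally, for $\rho$, write
\[
\rho(t,x)=\int f(t,x,v)\,dv=t^{-3}\int f_0\big(\Phi_{0,t}(x,v)\big)\,dv .
\]
We change variables $v\mapsto x_0$ at fixed $x$, using $x=X(t,0,x_0,v_0)\approx x_0+tv_0$ with Jacobian $t^{-3}(1+O(t^{-1}\ln^kt))$. This is the standard Bardos--Degond argument, quantified with the bounds on $\nabla\Phi$. It identifies $t^3\rho(t,x)$ with $m(t,x/t)$ up to $O(t^{-1}\ln^k t)$, where the shift $x_0/t$ in velocity is controlled by $\|\nabla_v m\|_{L^\infty}$. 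Combined with the bound on $m(t)-m_\infty$, this gives the $L^\infty$ estimate $t^{-4}\ln^6t$. The $L^p$ estimates follow by interpolating with the $L^1$ bound, using that the support lies in $|x|\les t$, which produces the factor $t^{3/p}$.
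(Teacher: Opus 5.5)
\textbf{Gap in the estimates for $m$ and $\rho$.} Both estimates rest on one fact: derivatives of the free-transport profile $g(t,x,v)=f(t,x+vt,v)$ grow only logarithmically,
\[
\|\jbrac{x}\nabla_{x,v}g(t)\|_{L^1_{x,v}\cap L^\infty_{x,v}}\les\|f_0\|_{W^{1,\infty}_{x,v}}\ln^6\jbrac{t}
\]
(Lemma \ref{vlasovweightedderiv}). You neither state nor prove this, and the bounds you do invoke are too weak. Flow derivatives growing like $t$, or even Proposition \ref{geoflow}, only give $\nabla_v[X(0,t,x+vt,v)]=t\nabla_xX(0,t)+\nabla_vX(0,t)=O(\eps t)$, because the cancellation between the two terms holds only to leading order.

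After the integration by parts you allude to,
\[
\p_t m=-\int E(t,x+vt)\cdot\nabla_v g\,dx-t\int(\nabla\cdot E)(t,x+vt)\,g\,dx .
\]
The second term is harmless. With $\nabla_v g=O(\eps t)$, however, the first term is only $O(t^{-1}\ln^3\jbrac{t})$, which is not integrable. The same is true of the term $E(t,tv)\cdot\nabla_v m$ in your Taylor splitting, since $\nabla_v m=\int\nabla_v g\,dx$. Integrating by parts removes the $t\nabla_x$ piece but says nothing about what remains.

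What is missing is a Gronwall argument run directly on $\mathcal{X}(s)=X(s,t,x+vt,v)$, starting from the terminal data $\nabla_v\mathcal{X}(t)=tI$, $\nabla_v\mathcal{V}(t)=I$:
\[
|\nabla_v\mathcal{X}(s)-sI|\les\eps_0^2\int_s^t\tau(\tau-s)\jbrac{\tau}^{-3}\ln\jbrac{\tau}\,d\tau\les\eps_0^2\ln^2\jbrac{t}.
\]
This gives $|\nabla_{x,v}g|\les\|f_0\|_{W^{1,\infty}}\ln^2\jbrac{t}$ on an $x$-support of radius $\les\ln\jbrac{t}$. Hence $\p_t m=O(t^{-2}\ln^k\jbrac{t})$, which yields the $t^{-1}$ rate.

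\textbf{The $\rho$ estimate.} It needs the same pointwise bound, and two steps of your sketch fail as written.
\begin{itemize}
\item Compare the exact identity $t^3\rho(t,x)=\int g(t,y,\tfrac{x-y}{t})\,dy$ with $m(t,\tfrac{x}{t})=\int g(t,y,\tfrac{x}{t})\,dy$. The velocity shift $y/t$ depends on the integration variable, so the error is bounded by $t^{-1}\int|y|\sup_v|\nabla_v g(t,y,v)|\,dy$. This is not controlled by $\|\nabla_v m\|_{L^\infty}$.
\item The Bardos--Degond Jacobian is not $t^3(1+O(t^{-1}\ln^k t))$. Even with the refined bound, $|\det\nabla_vX(0,t,x,v)|=t^3\big(|\det\nabla_xX(0,t,x,v)|+O(t^{-1}\ln^2\jbrac{t})\big)$. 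The quantity $\det\nabla_xX(0,t,x,v)-1$ is of size $\eps$, generated at early times, and does not decay in $t$.
\end{itemize}
Your passage to $L^p$ via the support bound $|x|\les t$ is fine.

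\textbf{The third estimate.} Your argument is correct and genuinely different from the paper's. The paper integrates $\p_t h$ for $h(t,x,v)=g(t,x-\ln(t)E_\infty(v),v)$, using Proposition \ref{Eerrest} and Lemma \ref{vlasovweightedderiv}. Yours needs only (\ref{Aasymp}), the Lipschitz bound on $f(s)$, and a Lipschitz bound on $\mathcal{A}_{\infty,s}^{-1}$, so it bypasses the refined derivative estimates altogether. The inverse bound uses $|\det\mathcal{D}\mathcal{A}_{\infty,s}|\geq\frac12$ from Proposition \ref{Xlim}, not just $C^1$ bounds. Two adjustments are needed:
\begin{itemize}
\item Take $s\geq 1$ rather than $s=0$, as Proposition \ref{Xlim} requires.
\item Handle the mismatch between the supports $\mathcal{A}_{t,s}(\mathcal{S}_s)$ and $\mathcal{A}_{\infty,s}(\mathcal{S}_s)$, for instance by running the construction on a neighborhood of $\overline{\mathcal{S}_s}$.
\end{itemize}
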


\begin{remark}
    Quantitative convergence rates towards $m_\infty$ and $f_\infty$ can be obtained without requiring more regularity on $f_0$ if one instead seeks convergence in a weaker norm. For instance, under the hypotheses of Theorem \ref{modscat}, the convergence of the flow implies that one has $W_1(f(t)\circ \mathcal{M}_t, f_\infty)\les t^{-\delta}$ where $W_1$ is the $1$-Wasserstein distance.
\end{remark}

\subsection{Difficulties and sketch of the proof}

\textbf{a. The Hartree part.} The key difficulty in the analysis of the Hartree part is obtaining the decay estimate $\|\phi(t)\|_{L^\infty}\les t^{-3/2}$. Once this is established, the required decay estimates for $E(t)$ follow from standard potential estimates and one can prove the necessary decay of $\rho(t)$ and $V*\rho(t)$ by showing, as in Bardos-Degond \cite{bardos_global_1985}, that the Vlasov characteristics behave like those of free transport plus lower order terms, see Proposition \ref{geoflow} and Lemma \ref{rhodecay}. So, the proof of Theorem \ref{decay} may be reduced to the $L^\infty$ decay estimate for $\phi$.\\

As is expected for long-range problems, the nonlinearity $(V*\rho)\phi(t)$ decays too slowly to treat it as purely perturbative. We draw inspiration from the analysis of the usual Hartree equation (see \cite{hayashi_asymptotics_1998}, \cite{kato_new_2011}) to get around this issue. To factor out the effects of the free dynamics, introduce the \emph{profiles} $$\psi(t) := e^{-it\Delta/2}\phi(t), \quad g(t,x,v) := f(t,x+vt,v).$$

The Fraunhofer formula (Lemma \ref{fraunhofer})

$$\phi(t,x) = \frac{e^{i|x|^2/2t}}{(it)^{3/2}}\widehat\psi(t,\tfrac{x}{t}) + o(t^{-3/2}),$$

which can be thought of as a stationary phase refinement of the usual $L^1-L^\infty$ dispersive estimate for $e^{it\Delta/2}$, reduces the $L^\infty$ decay estimate for $\phi$ to a uniform bound on $\|\hat\psi(t)\|_{L^\infty_{\xi}}$, since the remainder is controlled by weighted $L^2$ norms of $\psi$. We control these weighted norms using energy estimates with the Galilean vector field $J:=x+it\nabla_x$, which commutes with the linear Schr\"odinger equation and captures the $L^2$ evolution of moments along the linear flow, i.e. $\|J^s\phi\|_{L^2}=\||x|^s\psi\|_{L^2}$, see Proposition \ref{EEhartree} for details.\\

This brings us to the central difficulty: obtaining a uniform bound on $\|\hat\psi(t)\|_{L^\infty_\xi}$. Following the work of Hayashi-Naumkin \cite{hayashi_asymptotics_1998} and Kato-Pusateri \cite{kato_new_2011}, one needs to extract a phase correction, which requires determining the leading dynamics of the evolution equation for $\hat\psi$. To do so, we perform an explicit computation in Fourier space akin to \cite{kato_new_2011} or the proof of Fraunhofer's formula, making crucial use of the transport structure of the Vlasov equation, to find
\begin{equation}\label{hartreeleading}
    \p_t \hat\psi(t,\xi) = -it^{-1}(V*m)(t,\xi) \hat\psi(t,\xi) + O(t^{-1-\delta}).
\end{equation}
Then, provided one can control the remainder (which is delicate and requires bounds on $L^p$ moments of both $\psi$ and $g$, see Lemma \ref{remest}), an integrating factor can be used to remove the first term, so that the modified profile $$\hat{w}(t,\xi):= e^{iP(t,\xi)}\hat\psi(t,\xi), \quad P(t,\xi):=\int_1^t s^{-1}(V*m)(s,\xi)ds,$$ satisfies $\p_t \hat{w}(t,\xi) = O(t^{-1-\delta})$ and so one can integrate to obtain a uniform bound on $\|\hat{w}(t)\|_{L^\infty_\xi}$ and therefore $\|\hat{\psi}(t)\|_{L^\infty_\xi}$ since $P$ is real.\\

As a result, one also obtains $L^2_\xi\cap L^\infty_\xi$ convergence of $\hat{w}(t)$ to an asymptotic profile. In fact, for later purposes we show convergence in $C^\delta_\xi$ where $\delta$ satisfies the conditions of Theorem \ref{modscat}. This, when combined with Fraunhofer's formula, is almost enough to establish the asymptotic formula (\ref{phiasymp}), but we still need to show that the phase $P(t,\xi)$ can be well-approximated by $\ln(t)(V*m_\infty)(\xi)$. This follows from the convergence of the velocity characteristics on the Vlasov side combined with some delicate potential estimates, see Proposition \ref{scatmass}.\\

\textbf{b. The Vlasov part.} The next key difficulty is proving convergence to an asymptotic Vlasov profile $f_\infty$ without assuming any differentiability on the initial data $f_0$. We cannot use the Vlasov equation since that would involve controlling derivatives, so we prove convergence of the (suitably renormalized) particle trajectories instead. The convergence of the velocity characteristics is straightforward and follows from the decay estimate $\|E(t)\|_{L^\infty}\les t^{-2}$. However, since this decay estimate is critical from the standpoint of the spatial characteristics---indeed, integrating $E(t) = O(t^{-2})$ twice as Newton's laws would suggest leads to $O(\ln t)$ growth of the spatial characteristics---one needs to extract the leading dynamics of $E$ in order to prove convergence of the spatial characteristics. We use a similar Fourier-based calculation to extract the leading dynamics of $E$ along the free Vlasov flow:
$$E(t,x+vt) = -t^{-2}[\nabla V * |\hat\psi(t)|^2](v) + o(t^{-2})$$

Since $|\hat\psi(t)|^2$ converges to the asymptotic profile $|\phi_\infty|^2$, this leads to the definition of $E_\infty(v)$ in the statement of Theorem \ref{modscat}. The error is controlled by weighted norms of the Vlasov profile. Then, since $E(t,x+vt)\approx t^{-2}E_\infty(v)$, Newton's laws suggest that the fermion's phase-space evolution should behave like $$\mathcal{M}_t(x,v) := (x+vt-\ln(t)E_\infty(v),v),$$ which we call the \emph{modified free flow}. In this sense, we prove convergence of the particle trajectories \emph{in the frame of the modified free flow}, i.e. we show that $\mathcal{M}_t^{-1}\circ\Phi_{t,s}$ has a limit. An analogous argument has been carried out for Vlasov-Poisson in Pankavich \cite{pankavich_asymptotic_2022}.\\

However, one encounters several difficulties due to the low-regularity setting. First, the arguments of \cite{pankavich_asymptotic_2022} require $E_\infty\in C^2$ which is far more regularity than we have here, since $\phi_\infty\in C^\delta$ and elliptic regularity gives $E_\infty\in C^{1,\delta}$. This is because $\mathcal{M}_t^{-1}\circ\Phi_{t,s}$ requires evaluating $E_\infty$ along the velocity characteristics and hence taking a time derivative and a gradient to establish $C^1$ convergence puts two derivatives on $E_\infty$. To get around this, we ``freeze" the velocity at infinity and use the slight amount of H\"older regularity of $\nabla E_\infty$ to estimate the deviation between $\nabla E_\infty(V(t))$ and $\nabla E_\infty(V_\infty)$; see Proposition \ref{Xlim}. The second key difficulty is that one needs to show that $f(t)\circ \mathcal{M}_t = f(s)\circ (\mathcal{M}_t^{-1}\circ\Phi_{t,s})^{-1}$ converges to a limit in $L^p$ as $t\to\infty$, but the convergence of the renormalized flow may not behave well with respect to precomposition by the possibly discontinuous function $f(s)$. The measure-preserving structure of the asymptotic flow is essential here and allows us to use a density argument to upgrade the weak convergence in the sense of measures that one gets fairly easily to strong $L^2$ convergence, from which $L^p$, $1\leq p<\infty$ convergence follows by interpolation.

\subsection{Key contributions}

\textbf{a. The low-regularity Lagrangian setting.} Notably, we require no derivatives on the initial Vlasov data $f_0$, and essentially the minimum regularity on the initial Hartree data $\phi_0$. In fact, from the standpoint of regularity, we prove modified scattering in the essentially the same class of data that we proved local and global well-posedness in our earlier work \cite{cavanagh_global_2026}. It is unlikely one can push the required regularity on $\phi_0$ down further without risking non-uniqueness, since $\phi(t)\in H^s$ for $s<3/2$ is not enough to ensure $E$ is Lipschitz and hence produce a unique Lagrangian flow. We remark that every existing result on modified scattering for Vlasov-Poisson requires at least one derivative on the initial data.\\

Because we require no derivatives on $f$, we are forced to use a purely Lagrangian framework for the Vlasov analysis and cannot use tools like vector fields or PDE methods more broadly. However, our Lagrangian framework has the benefit of respecting the natural Lagrangian structure of the Vlasov equation. While classical, we believe this approach is still very fruitful, especially for low-regularity problems like the present one. Recent works \cite{flynn_scattering_2023}, \cite{pausader_stability_2021}, \cite{kepka_modified_2025}, \cite{pausader_stability_2026}, similarly leverage the Hamiltonian structure of the Vlasov-Poisson system to study its asymptotic behavior, illustrating the utility of such an approach. Our low-regularity framework can hopefully be used to tackle other low-regularity problems, such as the stability of (discontinuous) ground states for the Vlasov-Hartree system, or the problem of modified scattering for Vlasov-Poisson system with minimal derivative assumptions on the initial data, which was listed as a problem of interest in \cite{pausader_stability_2026-1}.\\

\textbf{b. Detailed asymptotics.} Another benefit of our approach is that we can relatively easily obtain detailed asymptotics of most quantities associated to the solution since most quantities can be written in terms of the characteristics, which converge. Quantities which enjoy elliptic regularization have asymptotics in stronger topologies with explicit rates (e.g. (\ref{VrhoSSasymp}), (\ref{Easymp})), while quantities which do not enjoy elliptic regularization generally have asymptotics in weaker topologies (e.g. (\ref{masymp}), (\ref{rhoasymp})), although these weak asymptotics can generally be upgraded to strong ones with explicit rates if one assumes $f_0$ is more regular; we give a sample of such results in Corollary \ref{higherreg}.

\subsection{Future questions}

\textbf{a. Inverse scattering.} To complete the analysis of the asymptotic behavior, it would be desirable to construct (modified) wave operators which map asymptotic states to initial data, which would help answer the question of which final states are ``reachable" by the nonlinear dynamics. This has been accomplished for the Vlasov-Poisson and Hartree equations separately, see for instance \cite{ginibre_long_1993}, \cite{ginibre_long_2000}, \cite{nakanishi_modified_2002} for Hartree and \cite{flynn_scattering_2023}, \cite{schlue_inverse_2025}, \cite{bigorgne_homeomorphic_2026} for Vlasov-Poisson.\\

\textbf{b. Stability of other equilibria.} An interesting, although likely very difficult, problem concerns the stability of and dynamical behavior of the Vlasov-Hartree system near nonzero equilibrium solutions. For spatially homogeneous equilibria to the Vlasov-Poisson system, this falls in the realm of Landau damping \cite{ionescu_stability_2023} \cite{ionescu_nonlinear_2024}, but much less is known for non-homogeneous equilibria. For the Vlasov-Hartree system, the fermionic part of the ground state is given by (see \cite{fournais_semi-classical_2018}, \cite{cardenas_effective_2025}) $f(x,v)=\chi_{\mathcal{E}(x,v)<\mathcal{E}_F}$, where $\mathcal{E}$ is the per-particle energy and $\mathcal{E}_F$ is a threshold constant called the Fermi energy. As previously mentioned, this is one motivation for our study of low-regularity solutions, since in general such an equilibrium would be discontinuous. We also note that our previous work \cite{cavanagh_global_2026} shows that nontrivial equilibria with finite mass, energy, and spatial moments only exist in the case of attractive interactions ($\gamma=-1$).\\

\textbf{c. Relativistic variants.} It would be interesting to consider a relativistic variant of the present system, in which $v$ in the Vlasov equation is replaced by $\hat{v}:=
v (1+|v|^2)^{-1/2}$ and the Schr\"odinger operator is replaced by a Klein-Gordon operator. Modified scattering for the analogous relativistic Hartree equation, also called the boson star equation, was carried out in \cite{pusateri_modified_2014}. We remark that a different coupling of a Klein-Gordon equation to a relativistic transport equation has been studied in \cite{nguyen_new_2024}, in which a Lagrangian approach was used to analyze the interaction between the Klein-Gordon waves and Vlasov particles.

\subsection{Plan of the paper}

In Sections \ref{prelim} through \ref{boson} we develop the necessary tools to prove Theorem \ref{modscat} and along the way prove Theorem \ref{decay}. In particular, the preliminaries including the bootstrap argument used to prove Theorem \ref{decay} are covered in Section \ref{prelim}, while the estimates pertaining to the fermions and bosons are proved in Sections \ref{ferm} and \ref{boson} respectively. Then, in Section \ref{asympbehavior} we complete the proof of Theorem \ref{modscat} and also prove Corollary \ref{higherreg}.\\

\textbf{Acknowledgments.} I would like to thank my advisor, Anna Mazzucato, for support and encouragement.

\section{Preliminaries}\label{prelim}

\subsection{Notation}
\begin{itemize}
    \item We will use the Fourier transform and its inverse frequently. 

$$\hat{f}(\xi) = (\mathcal{F}f)(\xi) := \int e^{-ix\cdot\xi} f(x)dx, \quad (\mathcal{F}^{-1}f)(x) := (2\pi)^{-3}\int e^{ix\cdot\xi}f(\xi)d\xi.$$

\item For $x\in \R^n$ we denote $\jbrac{x} = \sqrt{2+|x|^2}$; note that $\ln\jbrac{0} \neq 0$ with this notation.

\item For the Lebesgue spaces $L^p$, we will often write $L^p_x$ if $x$ is the integration variable, and similarly for other variables.
\item In the context of regularity indices, $s$ will always refer to the $L^2$ regularity index of $\phi_0$, although we also use it as a time variable. In context, it will be clear whether we mean $s$ as a regularity index or as a time variable.
\end{itemize}

\subsection{Potential estimates}

The following lemma will be used often to control the expressions involving the Coulomb potential.

\begin{lem}\label{potest}
    Let $0<\alpha<3$.\\
    
    (i) Denote $r_\alpha = \frac{3}{3-\alpha}$. Then as long as $1\leq p<r_\alpha<q\leq \infty$, 
    \begin{equation*}
        \||x|^{-\alpha}*u\|_{L^\infty} \les \|u\|_{L^p}^{\theta}\|u\|_{L^q}^{1-\theta}, \quad \theta = \frac{\frac{1}{r_\alpha}-\frac1q}{\frac1p - \frac1q}.
    \end{equation*}
    
    (ii) For any $ 3< p\leq \infty$ and $0<r<R$, we have
    \begin{equation*}
        \|\nabla_x^2 (V*u)\|_{L^\infty} \les r^{1-3/p}\|\nabla u\|_{L^p} + (1+\ln(R/r))\|u\|_{L^\infty} + R^{-3}\|u\|_{L^1}.
    \end{equation*}
\end{lem}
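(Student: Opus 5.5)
For part (i), I will split the convolution at a radius $R>0$ to be chosen, writing
\[
|x|^{-\alpha}*u=(|x|^{-\alpha}\mathbf 1_{|x|<R})*u+(|x|^{-\alpha}\mathbf 1_{|x|\ge R})*u .
\]
The near part I bound by H\"older with $u\in L^q$, using that $|x|^{-\alpha}\mathbf 1_{|x|<R}\in L^{q'}$ exactly when $\alpha q'<3$. Since $q'<r_\alpha'=3/\alpha$ is equivalent to $q>r_\alpha$, this gives a bound $\les R^{3/q'-\alpha}\|u\|_{L^q}$. The far part I bound by H\"older with $u\in L^p$, using $|x|^{-\alpha}\mathbf 1_{|x|\ge R}\in L^{p'}$ when $\alpha p'>3$, i.e.\ $p<r_\alpha$. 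This gives $\les R^{3/p'-\alpha}\|u\|_{L^p}$, where the exponent is negative. (When $p=1$ the far kernel is simply bounded by $R^{-\alpha}$.) I will then optimize in $R$, choosing $R$ so that the two terms are equal, i.e.\ $R^{3/p-3/q}=\|u\|_{L^p}/\|u\|_{L^q}$. A direct computation of the exponents should produce $\|u\|_{L^p}^\theta\|u\|_{L^q}^{1-\theta}$, with $\theta$ determined by $\theta(\frac1p-\frac1q)=\frac1{r_\alpha}-\frac1q$; this is the scaling-forced exponent. The only point needing care is checking the signs of the two exponents, which is exactly the condition $p<r_\alpha<q$.

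For part (ii), I will write $\p_i\p_j(V*u)=K_{ij}*u+c\,\delta_{ij}u$, where $K_{ij}$ is the Calder\'on--Zygmund kernel $\p_i\p_j\frac{1}{4\pi|x|}$ taken in the principal value sense. The local term is trivially bounded by $\|u\|_{L^\infty}$. I will split $K$ into three regions. On $|y|<r$ I use the cancellation of $K$ on spheres (mean zero) to write
\[
\int_{|y|<r}K(y)\bigl(u(x-y)-u(x)\bigr)\,dy .
\]
Then $|u(x-y)-u(x)|\le |y|^{1-3/p}\|\nabla u\|_{L^p}$ by Morrey's inequality, which is valid since $p>3$. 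Against $|K|\sim|y|^{-3}$ this integrates to $\les r^{1-3/p}\|\nabla u\|_{L^p}$. On $r\le|y|<R$ I bound $|K|\les|y|^{-3}$ directly, giving $\les\ln(R/r)\|u\|_{L^\infty}$. On $|y|\ge R$ I use $|K|\le R^{-3}$, giving $R^{-3}\|u\|_{L^1}$. Summing the three regions and the local term yields the claim.

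The main subtlety is justifying the cancellation step and the principal value representation for $u$ that is merely in $L^1\cap W^{1,p}$. I would first prove the estimate for smooth compactly supported $u$ and then pass to the limit by density, using that all the norms on the right-hand side are continuous along the approximation.
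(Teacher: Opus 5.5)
Your proof is correct, and it is the standard argument. The paper gives no proof of this lemma; it calls it well known and defers to its earlier work. The usual proof is exactly yours: an optimized radial cutoff of the kernel for (i), and for (ii) a near/intermediate/far split using the mean-zero property of $\nabla^2\frac{1}{4\pi|x|}$ on spheres together with Morrey's inequality.

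One step needs tidying, the final approximation. $C_c^\infty$ is not norm-dense in $L^\infty$ (relevant to the $\|u\|_{L^\infty}$ term) or in $W^{1,\infty}$ (relevant when $p=\infty$), so the claim that the right-hand side norms are continuous along the approximation is not literally right. Two easy fixes work. You can mollify and truncate, which does not increase the right-hand side in the limit, and then pass to the limit using weak-$*$ lower semicontinuity of $\|\nabla^2(V*u)\|_{L^\infty}$. Or you can skip approximation altogether: Morrey already makes $u$ H\"older continuous, which is enough for the classical principal-value formula for $\nabla^2(V*u)$.
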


\begin{proof} The lemma is well-known; for a complete proof see for instance our earlier work \cite{cavanagh_global_2026}.
\end{proof}

\subsection{Bootstrap argument}

First, recall the notion of a Lagrangian weak-mild solution to (\ref{VH}) and the global existence theorem from our previous work \cite{cavanagh_global_2026}:

\begin{defn}
    We say $(f,\phi)$ is a Lagrangian weak-mild solution to (\ref{VH}) on $[0,T]$ if 
    \begin{itemize}
        \item $(f,\phi)$ solves (\ref{VH}) in the sense of distributions;
        \item $(f,\phi)\in L^\infty([0,T];L^1_{x,v}\cap L^\infty_{x,v})\times C([0,T];H^s_x)$ for some $s>3/2$ and $f(t)$ is compactly supported for every $t\in [0,T]$;
        \item $\phi$ is a mild solution to the Hartree equation in (\ref{VH}), i.e. 
        $$\phi(t) = e^{it\Delta/2}\phi(0) - i\int_0^t e^{i(t-s)\Delta/2}[(V*\rho)\phi](s)ds;$$
        \item Energy is conserved for almost every $t\in [0,T]$, i.e.
        $$\mathcal{E}(t) := \frac12 \int |\nabla_x\phi(t)|^2 dx + \frac12 \iint |v|^2 f(t)dxdv + \int (V*\rho(t))|\phi(t)|^2 dx = \mathcal{E}(0).$$
     \end{itemize}
\end{defn}

\begin{thm}[\cite{cavanagh_global_2026}]\label{globalex}
    Let $f_0\in L^\infty_{x,v}$ with compact support and $\phi_0\in H^s_x$ for some $s\in (3/2,2)$, and assume $\gamma\in \{-1,1\}$. Then for any $T<\infty$ there exists a unique Lagrangian weak-mild solution to (\ref{VH}) on $[0,T]$ with initial data $(f_0,\phi_0)$.
\end{thm}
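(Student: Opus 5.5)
The plan is to build the solution by a Lagrangian fixed-point argument on short time intervals. The solution is then continued to arbitrary $T$ using a priori bounds that depend only on conserved quantities and on the growth of the velocity support. The key point is that $s>3/2$ makes the bosonic field Lipschitz. Given $\phi\in C([0,T];H^s_x)$, set $u=|\phi|^2\in L^1\cap L^\infty$. Then $\nabla u\in L^p$ for some $p>3$, since $\phi\in L^\infty$ and $\nabla\phi\in H^{s-1}\subset L^{p}$ with $p=3/(5/2-s)>3$. By Lemma \ref{potest}(ii), $\nabla_x E=-\nabla^2(V*u)$ is bounded, with a norm controlled by $\|\phi\|_{H^s}$. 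Hence the characteristic ODEs (\ref{charodes}) have a unique global measure-preserving flow $\Phi_{t,0}$. We set $f(t)=f_0\circ\Phi_{t,0}^{-1}$, which needs no regularity of $f_0$ and keeps compact support. Then $\rho(t)\in L^1\cap L^\infty$, and Lemma \ref{potest}(i) (applied to $|\nabla|^rV*\rho$, $r<2$) gives $V*\rho\in W^{r,\infty}$ for every $r<2$. This is enough to bound $\|(V*\rho)\phi\|_{H^s}\les \|V*\rho\|_{W^{s,\infty}}\|\phi\|_{H^s}$ for $s<2$, which is presumably why the restriction $s<2$ appears. The Hartree equation with this given potential is then solved in $C([0,T];H^s)$ by Duhamel.

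For local existence, I would iterate the map $\phi\mapsto f\mapsto \rho\mapsto \phi^{new}$ on a small time interval. Because $f_0$ may be discontinuous, I would not try to contract in $L^\infty_{x,v}$. Instead I would obtain uniform bounds in $H^s$ and prove contraction in a weak norm: $L^\infty_tL^2_x$ for $\phi$, paired with the sup-distance between flows $\|\Phi^1-\Phi^2\|_{L^\infty}$ on the (compact) support for $f$.

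The difference of densities is handled in duality, or in the style of Loeper via Wasserstein distance. Writing $\int (\rho^1-\rho^2)\chi\,dx=\iint f_0\,[\chi(X^1)-\chi(X^2)]$, we only need $V*(\rho^1-\rho^2)$ in $L^\infty$. This is controlled by $\|\nabla V*\rho\|_{L^\infty}$-type bounds times the flow distance. The flow distance is in turn controlled by Gronwall with the Lipschitz constant of $E$ plus $\|E^1-E^2\|_{L^\infty}\les \|\phi^1-\phi^2\|_{L^2}$, the latter via Lemma \ref{potest}(i) and interpolation with $H^s$. Uniqueness follows from the same estimate. Regularity gives $\phi\in C_tH^s$ in the limit, and energy conservation is checked by the standard regularization argument.

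For globalization I need $\|\phi(t)\|_{H^s}$ to stay finite on $[0,T]$. The inputs are the following.
\begin{itemize}
\item Mass conservation gives $\|\phi\|_{L^2}$ constant, and $\|f\|_{L^p}$ is constant for all $p$.
\item Energy conservation bounds $\|\nabla\phi\|_{L^2}$. For $\gamma=1$ this is immediate since the interaction term is nonnegative. For $\gamma=-1$ I would absorb the interaction term using $\int (V*\rho)|\phi|^2\les \|\rho\|_{L^{6/5}}\|\phi\|^2_{L^{12/5}}$. Then $\|\rho\|_{L^{6/5}}$ is bounded by kinetic-energy interpolation, $\|\rho\|_{L^{5/3}}\les \|f\|_\infty^{2/5}(\iint|v|^2f)^{3/5}$, together with Gagliardo--Nirenberg for $\phi$. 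This yields sublinear powers of the energies and hence a uniform bound.
\item With $\phi$ bounded in $H^1$, $\|E(t)\|_{L^\infty}\les \|\phi\|_{L^2}\|\nabla\phi\|_{L^2}$ is bounded, so the velocity support grows at most linearly. Consequently $\|\rho(t)\|_{L^\infty}\les \|f_0\|_\infty(1+t)^3$.
\item Then $\|V*\rho\|_{W^{s,\infty}}\le C(T)$, and the $H^s$ energy estimate $\frac{d}{dt}\|\phi\|_{H^s}\les \|V*\rho\|_{W^{s,\infty}}\|\phi\|_{H^s}$ gives at most exponential growth via Gronwall.
\end{itemize}
This rules out blow-up of the local existence time on $[0,T]$.

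The main obstacle is the contraction/uniqueness step with discontinuous $f_0$. There one must close a Gronwall loop in which the flow difference feeds back into $\|\phi^1-\phi^2\|_{L^2}$ only through $V*(\rho^1-\rho^2)$, using Lipschitz bounds on $V*\rho$ rather than any regularity of $f$. The commutator estimate for $(V*\rho)\phi$ in $H^s$ with only $V*\rho\in W^{r,\infty}$, $r<2$, is the secondary technical point.
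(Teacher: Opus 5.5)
\emph{Note:} this theorem is not proved in the present paper; it is quoted from the author's earlier work. I can therefore only judge your argument on its own terms. Your globalization part is sound: the Lipschitz field for $s>3/2$, the energy bound for both signs of $\gamma$, linear growth of the velocity support, and the role of $s<2$ through $|\nabla|^sV*\rho\in L^\infty$ all check out.

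\textbf{The gap is in the contraction/uniqueness step, and it is not where you place it.} You pair the sup-distance between flows with $L^\infty_tL^2_x$ control of $\phi^1-\phi^2$. This forces the claim $\|E^1-E^2\|_{L^\infty}\lesssim\|\phi^1-\phi^2\|_{L^2}$, which is false.

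We have $E^1-E^2=-\nabla V*\mathrm{Re}\big((\phi^1-\phi^2)\overline{(\phi^1+\phi^2)}\big)$ with $|\nabla V|\sim|x|^{-2}$. So Lemma \ref{potest}(i) needs the density in some $L^q$ with $q>3$, i.e.\ $\phi^1-\phi^2\in L^q$ with $q>3$, and $L^2$ does not give this. Interpolating with the uniform $H^s$ bound only yields $\|E^1-E^2\|_{L^\infty}\le C(R)\|\phi^1-\phi^2\|_{L^2}^{\alpha}$ with $\alpha<1$. The loss is genuine. Take a difference of $L^2$-size $\epsilon$, concentrated at scale $\lambda^{-1}$ near a point where $\phi^1+\phi^2\neq0$; this is admissible in an $H^s$-ball of radius $R$ for $\lambda\lesssim(R/\epsilon)^{1/s}$. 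It changes $E$ by about $\epsilon\lambda^{1/2}$ in $L^\infty$. With a sublinear term, the loop $D'\lesssim D+D^{\alpha}$, $D(0)=0$, gives neither a contraction nor $D\equiv0$. So your iteration fails, and so does uniqueness, which is part of the statement.

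\textbf{Repair via the Wasserstein route you mention but do not carry out.} Replace the sup-distance by Loeper's quadratic distance $Q(t)=\iint f_0\big(|X^1-X^2|^2+|V^1-V^2|^2\big)\,dy\,dw$.
\begin{itemize}
\item Field difference. By Cauchy--Schwarz and measure preservation, it enters only through $\int\rho^2|E^1-E^2|^2\,dx\le\|\rho^2\|_{L^\infty}\|E^1-E^2\|_{L^2}^2$. By Hardy--Littlewood--Sobolev, $\|E^1-E^2\|_{L^2}\lesssim\||\phi^1|^2-|\phi^2|^2\|_{L^{6/5}}\le\|\phi^1-\phi^2\|_{L^2}\|\,|\phi^1|+|\phi^2|\,\|_{L^3}$, which is linear.
\item Schr\"odinger side. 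We have $\|(V*(\rho^1-\rho^2))\phi^2\|_{L^2}\le\|V*(\rho^1-\rho^2)\|_{L^6}\|\phi^2\|_{L^3}\lesssim\|\nabla V*(\rho^1-\rho^2)\|_{L^2}$. Loeper's estimate bounds the last quantity by $\max_i\|\rho^i\|_{L^\infty}^{1/2}Q^{1/2}$, since $W_2(\rho^1,\rho^2)^2\le Q$ up to normalization.
\item Closure. $Q+\|\phi^1-\phi^2\|_{L^2}^2$ then obeys a linear Gronwall inequality, with constants depending on $\|\nabla E^1\|_{L^\infty}$, $\|\rho^i\|_{L^\infty}$ and $\|\phi^i\|_{H^1}$.
\end{itemize}

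\textbf{Alternative repair.} Keep the sup-distance, but measure $\phi^1-\phi^2$ in a norm controlling $L^p$ for some $p>3$. One option is a Strichartz norm such as $L^2_tL^6_x$. Another is $H^\sigma$ with $\tfrac12<\sigma<1$; then you need $\||\nabla|^\sigma V*(\rho^1-\rho^2)\|_{L^\infty}\lesssim$ flow distance, which follows from the near/far splitting in the proof of Lemma \ref{scatmass}.

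\textbf{Minor point.} The bound $\|E\|_{L^\infty}\lesssim\|\phi\|_{L^2}\|\nabla\phi\|_{L^2}$ is false by scaling. Hardy's inequality gives $\|E\|_{L^\infty}\lesssim\|\nabla\phi\|_{L^2}^2$, which is all your globalization needs.
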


\begin{remark}
Recall that the regularity assumption on $\phi$ implies that $E$ is Lipschitz (actually $C^{1,\alpha}$ for $\alpha = s-\frac32$ by elliptic regularity and Morrey's embedding) and so the Lagrangian flow is well-defined.
\end{remark}

The proof of Theorem \ref{decay} will follow from the following bootstrap argument. We assume the the field $E$ decays at the rate of a free solution, via the assumption that 

$$z(t) := \sup_{\tau\in[0,t]}\Big( \jbrac{\tau}^2\|E(\tau)\|_{L^\infty}+\jbrac{\tau}^3\ln^{-1}\jbrac{\tau}\|\nabla_x E(\tau)\|_{L^\infty}\Big)\leq \eps$$

up to time $T$. Then, provided $\eps$ is small, the decay estimates for $E$ will be used to prove that the nonlinear Lagrangian flow closely approximates the flow associated to free transport, which in particular implies that the fermion density $\rho$ and the associated potential $V*\rho$ decay at the same rate as a free solution. These estimates give bounds on spatial moments of the profiles $g,\psi$, which allow us to construct a phase correction and thereby prove the decay estimate for $\phi$ as outlined in the introduction. This will imply the following improved bootstrap estimate for $E$, namely
$$z(t) \les \eps_0^2(1+\eps)^2,$$
which will close the bootstrap upon taking $\eps_0$ (the size of the initial data) sufficiently small. The sketch of the argument is outlined in Figure \ref{fig:bootstrapfig}.

\begin{figure}[h]
    \centering
    \caption{Sketch of the bootstrap argument used to obtain the decay estimates}
    \begin{tikzpicture}
  \node[draw] (node3) at (2,3) {Decay of $E$};
  \node[draw] (node4) at (2,2) {Lagrangian bounds};
  \node[draw] (node6) at (-0.75,0) {Moments of $g$};
  \node[draw] (node5) at (2,1) {Decay of $\rho$, $V*\rho$};
  \node[draw] (node1) at (2,0) {Moments of $\psi$};
  \node[draw] (node7) at (4,-1.98) {$\phi\in H^s$};
  \node[draw] (node2) at (2,-1.98) {Decay of $\phi$};
  \node[draw] (node8) at (2,-1) {Phase correction};
  \draw[->, semithick] (node3.south) -- (node4.north);
  \draw[->, semithick] (node4.south) -- (node5.north);
  \draw[->, semithick] (node5.south) -- (node1.north);

  \node[draw] (node9) at (2,-3) {Decay of $E$};

  \draw[->, semithick] (node5.east) -- (4.02,1) -- (node7.north);

  \draw[->, semithick] (node4.west) -- (-0.75,2) -- (node6.north);

  \draw[->, semithick] (node6.south) -- (-0.75,-1) -- (node8.west);
  \draw[->, semithick] (node1.south) -- (node8.north);

  \draw[semithick, ->] (node7.south) -- (4,-3) -- (node9.east);
  \draw[->, semithick] (node2.south) -- (node9.north);
  \draw[->, semithick] (node8.south) -- (node2.north);
  \draw[->, semithick, loosely dashed] (2.64,-3.28) .. controls (6.5,-4.5) and (5.32,3.5) .. (node3.east);
  \node[draw=black, fill=white, inner sep=2.43pt, node font=\footnotesize, rounded corners] at (5,0) {Bootstrap};
\end{tikzpicture}
    \label{fig:bootstrapfig}
\end{figure}

We make the above discussion rigorous in the following proposition, whose proof is the main content of the next few sections.

\begin{prop}\label{bootstrap}
    Let $T>0$ be arbitrary. Suppose $z(t)\leq \eps$ for all $t\in [0,T]$. Then there exists $\eps_*>0$ such that if $\eps\leq \eps_*$, the following decay estimates hold for all $t\in[0,T]$:
    \begin{gather}
        \|\rho(t)\|_{L^\infty}\les \eps_0\jbrac{t}^{-3}, \quad \||\nabla|^r V*\rho(t)\|_{L^\infty}\les \eps_0\jbrac{t}^{-1-r}, \label{rhodecaybs}\\
        \|\phi(t)\|_{L^\infty}\les \eps_0(1+\eps)\jbrac{t}^{-3/2}.\label{phidecaybs}
    \end{gather}
    And as a consequence,
    \begin{equation}\label{improvedbs}
        z(t)\leq C\eps_0^2(1+\eps)^2 \quad \forall t\in[0,T].
    \end{equation}
\end{prop}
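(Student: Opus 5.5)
The plan follows the chain in Figure \ref{fig:bootstrapfig}: Lagrangian bounds, then decay of $\rho$ and $V*\rho$, then moments of $g$ and $\psi$, then the phase correction for $\hat\psi$, then decay of $\phi$, and finally the improved bound on $E$.

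\textbf{Step 1: Lagrangian bounds.} Under $z(t)\le\eps$, I integrate the characteristic ODEs. Since $|E|\le\eps\jbrac{t}^{-2}$ is integrable, $V(t,0,x,v)$ differs from $v$ by $O(\eps)$. For the spatial part, $X(t,0,x,v)=x+vt+O(\eps\ln\jbrac t)$. For the derivatives I differentiate the ODE and use $\|\nabla_x E\|_{L^\infty}\le\eps\jbrac t^{-3}\ln\jbrac t$, which is integrable even after multiplying by $t$. A Gronwall argument then gives $\nabla_v X(t)= t(I+O(\eps))$ and $\nabla_v V = I+O(\eps)$.

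\textbf{Step 2: decay of $\rho$ and $V*\rho$.} Write $\rho(t,x)=\int f_0(\Phi_{0,t}(x,v))\,dv$ and change variables in $v$. Because $v\mapsto X(t,0,x_0,v)$ has Jacobian $\approx t^3$, this gives $\|\rho(t)\|_{L^\infty}\les\|f_0\|_{L^\infty_xL^1_v}\,t^{-3}$ as in Bardos--Degond. Compact support of $f_0$ handles $\|f_0\|_{L^\infty_xL^1_v}\les \eps_0$. Mass conservation gives $\|\rho\|_{L^1}\le\eps_0$. Lemma \ref{potest}(i) with $\alpha=1+r$, interpolating between $L^1$ and $L^\infty$, then yields $\||\nabla|^rV*\rho\|_{L^\infty}\les\eps_0\jbrac t^{-1-r}$. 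The same Lagrangian description gives polynomial-in-$\ln t$ bounds on $L^p$ moments of $g$. The reason is that $g(t,x,v)=f(t,x+vt,v)$ has support of size $O(1+\eps\ln\jbrac t)$ in $x$.

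\textbf{Step 3: moments of $\psi$, phase correction, decay of $\phi$.} I run energy estimates for $\|\phi\|_{H^s}$ and $\|J^s\phi\|_{L^2}=\||x|^s\psi\|_{L^2}$. The commutator $[J,V*\rho]=it\nabla(V*\rho)$ is $O(\eps_0)$ by Step 2 with $r=1$. The fractional power $J^s$ needs the usual fractional Leibniz/commutator estimates, using the $r\in[0,2)$ range in Step 2. This should give growth at most $\eps_0\ln^C\jbrac t$, or $\jbrac t^{C\eps_0}$ in the worst case.

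Next I derive (\ref{hartreeleading}) by writing $\p_t\hat\psi(t,\xi)=-i\,\mathcal F[e^{-it\Delta/2}((V*\rho)\phi)](\xi)$. I express $\rho$ through $g$, so that $V*\rho$ involves $\hat V(\eta)\hat g(t,\eta,\eta t)$, and expand around $\eta=0$ as in Kato--Pusateri. The main term is $t^{-1}(V*m)(t,\xi)\hat\psi$. The remainder is $O(t^{-1-\delta})$, controlled by the moments of $\psi$ and $g$ from the previous steps. Here Lemma \ref{remest} is the target statement, and this step is the main obstacle. The danger is that the loss from $|x|^s$ moments must beat the $\ln$ growth, which forces $\delta<s_{gap}/2$ and careful H\"older interpolation of $|\eta|^{s_{gap}}$ gains.

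Since $P$ is real, integrating the equation for $\hat w=e^{iP}\hat\psi$ gives $\|\hat\psi(t)\|_{L^\infty}\le\|\hat\psi(0)\|_{L^\infty}+C\eps_0\sum t^{-1-\delta}\les\eps_0(1+\eps)$. The Fraunhofer formula (Lemma \ref{fraunhofer}) then bounds the remainder by $t^{-3/2-\delta'}\||x|^s\psi\|_{L^2}$, which gives (\ref{phidecaybs}).

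\textbf{Step 4: improved bootstrap.} Write $E=-\nabla V*|\phi|^2$. Lemma \ref{potest}(i) with $\alpha=2$, using $\||\phi|^2\|_{L^1}\le\eps_0^2$ and $\||\phi|^2\|_{L^\infty}\les\eps_0^2(1+\eps)^2t^{-3}$, gives $\|E\|_{L^\infty}\les\eps_0^2(1+\eps)^2t^{-2}$.

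For $\nabla E$, I use Lemma \ref{potest}(ii) with $u=|\phi|^2$, $r=t^{-N}$, $R=t$. This needs $\|\nabla|\phi|^2\|_{L^p}$ for some $p>3$. I bound it by $\|\phi\|_{L^\infty}\|\nabla\phi\|_{L^p}$, using $H^s\subset W^{1,p}$ for $p$ close to $3$ with polynomial growth of $\|\phi\|_{H^s}$, which the factor $r^{1-3/p}$ absorbs. The $\ln(R/r)$ term yields $\eps_0^2t^{-3}\ln t$ and the $R^{-3}$ term yields $\eps_0^2t^{-3}$. Together these give (\ref{improvedbs}).
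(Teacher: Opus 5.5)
Your proposal is correct and follows essentially the same chain as the paper (Proposition \ref{geoflow}, Lemmas \ref{rhodecay}, \ref{potdecayest}, \ref{vlasovweighted}, Proposition \ref{EEhartree}, Lemma \ref{remest} with Proposition \ref{profilebd}, Lemma \ref{fraunhofer}, Lemma \ref{Edecay}). Your rescaling $\zeta=t\eta$ inside $\hat g(t,\eta,t\eta)$ is the same computation the paper does via Plancherel in $(v,\eta)$, and the remainder is closed with the same moment bounds; at high dual frequency these require Plancherel in $v$, since $g$ has no $v$-regularity. The only slips are cosmetic: the change of variables $v\mapsto X(0,t,x,v)$ gives $\|f_0\|_{L^1_xL^\infty_v}$ rather than $\|f_0\|_{L^\infty_xL^1_v}$ (harmless by compact support), and the integration of $\partial_t\hat w$ should start at $t=1$, with $[0,1]$ handled by $\|\hat\psi\|_{L^\infty}\le\|\psi\|_{L^1}$.
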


\begin{proof}[Proof of Theorem \ref{decay} assuming Proposition \ref{bootstrap}]
    We need to first verify that $z(t)$ is continuous. From the existence theorem in \cite{cavanagh_global_2026}, $\phi\in C([0,T];H^s)$ and therefore by Sobolev embedding $\phi\in C([0,T];L^2\cap L^\infty)$. Moreover, using Lemma \ref{potest}, we get
    \begin{align*}
        \|E(t)-E(t')\|_{L^\infty} &\les \big\||\phi(t)|^2-|\phi(t')|^2 \big\|_{L^1}^{1/3} \big\||\phi(t)|^2-|\phi(t')|^2 \big\|_{L^\infty}^{2/3}\\
        &\les \|\phi\|_{C([0,T];L^2\cap L^\infty)} \|\phi(t)-\phi(t')\|_{L^2}^{1/3}\|\phi(t)-\phi(t')\|_{L^\infty}^{2/3}.
    \end{align*}
    Therefore, $E\in C([0,T];L^\infty)$. Similarly, using the second claim in Lemma \ref{potest} with $r=1$, $R=2$, and $p=\frac{6}{3-2(s-1)}=\frac{6}{5-2s}$ (which is larger than $3$ since $s>3/2$), so that $\dot{H}^{s-1}\hookrightarrow L^p$, we get
    \begin{align*}
        \|\nabla_x E(t)-\nabla_x E(t')\|_{L^\infty} &\les \|\nabla_x(|\phi(t)|^2-|\phi(t')|^2)\|_{L^p} + \||\phi(t)|^2-|\phi(t')|^2\|_{L^\infty} + \||\phi(t)|^2-|\phi(t')|^2\|_{L^1}\\
        &\les \||\phi(t)|^2-|\phi(t')|^2\|_{H^s}\\
        &\les \|\phi\|_{C([0,T];H^s)}\|\phi(t)-\phi(t')\|_{H^s}.
    \end{align*}
    This proves that $\nabla_x E\in C([0,T];L^\infty)$.\\
    
    Now we prove that the bootstrap proposition implies the theorem. Without loss of generality, assume $\eps_*\leq 1$. We will prove that if $\eps_0$ is chosen small enough, then $z(t)\leq \eps_*$ on $[0,T]$. To this end, suppose there exists a finite positive time $T_*:=\inf\{t\in [0,T] : z(t)>\eps_*\}$, so that $z(t)\leq \eps_*$ whenever $t\in [0,T_*)$. By continuity, $z(T_*)\leq \eps_*$ as well, so (\ref{improvedbs}) gives $z(t)\leq C\eps_0^2(1+\eps_*)^2$ on $[0,T_*]$. By choosing $\eps_0^2\leq \frac{\eps_*}{2C(1+\eps_*)^2}$, we get $z(t)\leq \eps_*/2$ on $[0,T_*]$, contradicting the definition of $T_*$ as we can extend the interval on which $z(t)\leq \eps_*$ by continuity. As a result, the decay estimates stated in Theorem \ref{decay} follow from (\ref{rhodecaybs}),(\ref{phidecaybs}) after applying Proposition \ref{bootstrap} again.
\end{proof}

\section{Analysis of the fermionic part}\label{ferm}

\subsection{Properties of the characteristic flow}
For any $s,t\geq 0$, define the characteristic ODEs
\begin{equation}\label{charodes}
    \begin{cases}
        \frac{d}{ds} X(s,t,x,v) = V(s,t,x,v) & X(t,t,x,v) = x,\\
        \frac{d}{ds}V(s,t,x,v) = E(s,X(s,t,x,v)) & V(t,t,x,v)=v.
    \end{cases}
\end{equation}
When $s>t$ we call $(X,V)$ \emph{forward} characteristics, and when $s<t$ we call them \emph{backward} characteristics. In any case, they generate the nonlinear flow 
$$\Phi_{s,t}(x,v):=(X(s,t,x,v),V(s,t,x,v)).$$

Since $f$ is a Lagrangian solution, it is constant along the flow, i.e. for any $s,t$,
$$f(t,x,v) = f(s,X(s,t,x,v),V(s,t,x,v)).$$
Furthermore, since $(v,E)\in C^1_{x,v}$ and is divergence-free, $\Phi_{s,t}$ is a measure-preserving $C^1$ diffeomorphism. It is straightforward to check that $\Phi_{t,s}\circ\Phi_{s,\tau}=\Phi_{t,\tau}$, which implies $\Phi_{s,t}^{-1}=\Phi_{t,s}$. When $E=0$, one can compute the free flow explicitly: 
$$\mathscr{F}_{s,t}(x,v) := (x-(t-s)v,v).$$

This is the flow associated to free transport. We observe that $\nabla_v (x-(t-s)v) = (s-t)I$, and this property is responsible behind dispersion associated to the free Vlasov equation, since it indicates that the map $v\mapsto X(s,t,x,v)$ is injective for $s\neq t$; i.e. particles spread out since different velocities get mapped to different positions.\\ 

The next proposition is well-known (see for instance \cite{bardos_global_1985}) and essentially says that, provided $E$ is small and decays fast enough, the nonlinear flow is close to that of free transport, so one has the same lower bound on $\det\nabla_v X$ as one has for free transport, which will be used in the dispersive estimate for $\rho$. The other estimates will be used later to construct the asymptotic characteristics. 

\begin{prop}\label{geoflow}
    Assume 
    $$\|E(t)\|_{L^\infty}\leq \eps \jbrac{t}^{-2}, \quad \|\nabla_x E(t)\|_{L^\infty} \leq \eps\jbrac{t}^{-3} \ln\jbrac{t}.$$
    Then for all $s,t$ we have
    $$|\nabla_v X(s,t,x,v)-(s-t)I|\les \eps|t-s|, \quad |\nabla_v V(s,t,x,v) - I| \les \eps,$$
    $$|\nabla_x X(s,t,x,v) - I|\les \eps, \quad |\nabla_x V(s,t,x,v)|\les \eps.$$
    Also, if $\eps$ is chosen small enough, for all $s,t$ there holds
    $$\frac12 \leq |t-s|^{-3}|\det \nabla_v X(s,t,x,v)| + |\det \nabla_v V(s,t,x,v)|\leq \frac32. $$
\end{prop}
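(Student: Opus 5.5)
We differentiate the characteristic ODEs (\ref{charodes}) in $(x,v)$ and run a Gronwall-type argument on the deviation from the free flow. Fix $t$ and treat $s$ as the evolution variable. Set
$$A(s)=\nabla_v X(s,t,x,v)-(s-t)I,\qquad B(s)=\nabla_v V(s,t,x,v)-I .$$
Differentiating (\ref{charodes}) gives
$$\tfrac{d}{ds}A=B,\qquad \tfrac{d}{ds}B=\nabla_xE(s,X)\big(A+(s-t)I\big),\qquad A(t)=B(t)=0 .$$
Integrating twice,
$$B(s)=\int_t^s \nabla_xE(\tau,X)\big(A(\tau)+(\tau-t)I\big)\,d\tau,\qquad A(s)=\int_t^s B(\tau)\,d\tau .$$
Use the hypothesis $|\nabla_xE(\tau)|\le\eps\jbrac{\tau}^{-3}\ln\jbrac{\tau}$ together with the bootstrap ansatz $|A(\tau)|\le C\eps|\tau-t|$. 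Then
$$|B(s)|\les \eps\int_{[t,s]}\jbrac{\tau}^{-3}\ln\jbrac{\tau}\,|\tau-t|\,d\tau .$$
The key elementary bound is
$$\sup_{s,t}\int_{[t,s]}\jbrac{\tau}^{-3}\ln\jbrac{\tau}\,|\tau-t|\,d\tau<\infty .$$
To see it, split into $|\tau-t|\le 2\jbrac{\tau}$, where the weight $|\tau-t|$ is harmless, and $|\tau-t|>2\jbrac{\tau}$. In the second region $\jbrac{\tau}\ls |t|$, so $|\tau-t|\les\jbrac{t}$; since also $|\tau|\les\jbrac{t}$ there, the integrand's $\jbrac{\tau}^{-3}\ln\jbrac\tau$ decay keeps the integral bounded uniformly. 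Note that $\int\jbrac{\tau}^{-2}\ln\jbrac\tau\,d\tau<\infty$ covers the first region.

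Given this bound, $|B|\les\eps$, hence $|A(s)|\le\int|B|\les\eps|s-t|$. This closes the ansatz for $\eps$ small, made rigorous by a standard continuity argument in $s$. The analogous system for $\nabla_xX-I$ and $\nabla_xV$ has data $(0,0)$ at $s=t$ and forcing $\nabla_xE\,(\nabla_xX)$. Note $\tfrac{d}{ds}\nabla_xV=\nabla_xE\,\nabla_xX$, and the forcing is integrable with weight $|\tau-t|$ in the same way, giving $|\nabla_xV|\les\eps$ and $|\nabla_xX-I|\les\eps$.

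\textbf{Main obstacle: the uniform-in-$(s,t)$ integral bound.} This is where the critical decay rate of $\nabla_xE$ matters. In the hardest case $t$ is large and $s\to 0$ (backward characteristics), so the weight $|\tau-t|$ is as large as $t$ near $\tau=0$. My first attempt would be to bound $|\tau-t|\le|\tau|+|t|$ and handle the $|t|$ part by dividing through by $|s-t|$: what we actually need is
$$|A(s)|\le\int_{[t,s]}|B|\les\eps|s-t|,$$
which only requires $|B|\les\eps$. For that, $\eps\int\jbrac\tau^{-3}\ln\jbrac\tau\,|\tau-t|\,d\tau$ must be bounded. If the $|t|$ term makes this fail, I would instead prove the weaker $|B(s)|\les\eps\,|s-t|\,/\jbrac{\min}$-type bounds and reorganize accordingly. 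In practice, I expect the bounded-$B$ route to work after splitting $\tau$ at $t/2$, using the $\ln$ loss only for $\tau\ge t/2$.

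Finally, for the determinant bound, write $\nabla_vX=(s-t)(I+O(\eps))$ when $|s-t|\ge1$. Then
$$|t-s|^{-3}\det\nabla_vX=1+O(\eps)\qquad\text{and}\qquad \det\nabla_vV=1+O(\eps),$$
with constants independent of $s,t$. Since each of the two terms in the stated sum is therefore $1+O(\eps)$, choosing $\eps$ small gives the two-sided bound $\frac12\le\cdot\le\frac32$.
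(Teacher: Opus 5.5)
Your argument breaks at the ``key elementary bound'' $\sup_{s,t}\int_{[t,s]}\jbrac{\tau}^{-3}\ln\jbrac{\tau}\,|\tau-t|\,d\tau<\infty$, which is false. For $s=0$ the integral is at least $\int_0^1(t-\tau)\jbrac{\tau}^{-3}\ln\jbrac{\tau}\,d\tau\gtrsim t$. In the region $|\tau-t|>2\jbrac{\tau}$ you bound the weight by $\jbrac{t}$, which gives a bound of size $\jbrac{t}$, not $O(1)$. Splitting at $\tau=t/2$ cannot help: the large contribution comes from $\tau=O(1)$, where the weight is $\approx t$ and $\nabla_xE$ has size $\eps$.

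Nor is this a defect of the method. The conclusion $|\nabla_vV(s,t,x,v)-I|\les\eps$ is itself false for backward characteristics. Take a field obeying the hypotheses with $\nabla_xE\approx\eps M$ along the trajectory for $\tau\in[0,1]$ and $E\equiv0$ for $\tau\ge1$. Then $\nabla_vX(\tau,t)=-(t-\tau)I$ for $\tau\in[1,t]$, and $\nabla_vV(0,t,x,v)\approx I+\eps(t-\tfrac12)M$. For $M=-cI$ this matrix degenerates at some $t\sim\eps^{-1}$, so the bound on $|\det\nabla_vV|$ fails too.

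The same example run forward from time $0$ refutes your claim that the $x$-system goes ``in the same way''. In $\nabla_xX(s)-I=\int_t^s(s-\tau)\nabla_xE\,\nabla_xX\,d\tau$ the weight is $|s-\tau|$, not $|\tau-t|$, and one gets $\nabla_xX(s,0)\approx I+\eps(s-\tfrac12)M$. So the $\nabla_vV$ and $\det\nabla_vV$ bounds for $s<t$, and the $\nabla_xX$ bound for $s>t$, cannot be proved uniformly in $(s,t)$ by any argument. The paper's proof has the same flaw: its last step for $\nabla_vV$ asserts $\int_s^t(t-\tau)\jbrac{\tau}^{-3}\ln\jbrac{\tau}\,d\tau\les1$, and the $x$-derivatives are omitted.

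What survives is the $\nabla_vX$ bound, in both time directions, but it must not be routed through $\sup|B|$. Keep the double integral $A(s)=\int_t^s(s-\tau)\,\nabla_xE(\tau,X(\tau))\nabla_vX(\tau)\,d\tau$; this is Taylor's formula, the paper's route, or equivalently Fubini in $\int_{[t,s]}|B|$. For $\tau$ between $s,t\ge0$, both $|s-\tau|$ and $|\tau-t|$ are at most $|t-s|$, while one of them equals $\tau-\min(s,t)\le\tau$. Hence $|s-\tau|\,|\tau-t|\le|t-s|\,\tau$. Since $\tau\jbrac{\tau}^{-3}\ln\jbrac{\tau}$ is integrable, your bootstrap $|A(\tau)|\le C\eps|\tau-t|$ then closes and gives $|A(s)|\les\eps|t-s|$. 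This yields $|t-s|^{-3}|\det\nabla_vX|\in[\tfrac12,\tfrac32]$, which is the bound Lemma \ref{rhodecay} actually needs.

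The other bounds hold only direction by direction:
\begin{itemize}
\item $|\nabla_vV-I|\les\eps$ for $s\ge t$, but for $s\le t$ only $|\nabla_vV(s,t)-I|\les\eps(t-s)\jbrac{s}^{-2}\ln\jbrac{s}$. This is enough for the case $t\le1$ used in Lemma \ref{rhodecay}.
\item $|\nabla_xX-I|\les\eps$ for $s\le t$, but only $\les\eps(s-t)\jbrac{t}^{-2}\ln\jbrac{t}$ for $s\ge t$.
\item $|\nabla_xV|\les\eps$ in both directions.
\end{itemize}

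Finally, the displayed determinant ``sum'' can only mean two separate bounds, since each term is $1+O(\eps)$ and their sum is close to $2$. Your closing sentence blurs this.
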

\begin{proof}
    The proof follows similarly to the analysis in \cite{bardos_global_1985}. We suppress the dependence of the characteristics on $t,x,v$, so that
    \begin{equation*}
        \begin{cases}
            \nabla_v \dot{X}(s) = \nabla_v V(s), &\nabla_v X(t) = 0;\\
            \nabla_v \dot{V}(s) = \nabla_v X(s)\nabla_x E(s,X(s)),  &\nabla_v V(t) = I.
        \end{cases}
    \end{equation*}
    Then, Taylor's theorem yields 
    \begin{align*}
        \nabla_v X(s) &= \nabla_v X(t) + (s-t)\nabla_v V(t) + \int_s^t  (\tau-s)\nabla_v X(\tau)\nabla_x E(\tau,\mathcal{X}(\tau))d\tau\\
        &= (s-t)I + \int_s^t (\tau-s)\nabla_v X(\tau)\nabla_x E(\tau,X(\tau))d\tau.
    \end{align*}
    This implies 
    \begin{align*}
        |\nabla_v X(s) - (s-t)I| &\leq \int_s^t (t-\tau)(\tau-s)|\nabla_x E(\tau,X(\tau))|d\tau + \int_s^t (\tau-s)|\nabla_v X(\tau)-(\tau-t) I||\nabla_x E(\tau,X(\tau))|d\tau\\
        &\leq \int_s^t (t-\tau)(\tau-s)\eps \jbrac{\tau}^{-3}\ln\jbrac{\tau} d\tau + \int_s^t (\tau-s)\eps \jbrac{\tau}^{-3}\ln\jbrac{\tau} |\nabla_v X(\tau)-(\tau-t) I|d\tau.
    \end{align*}
    Since $\tau \mapsto (\tau-s) \jbrac{\tau}^{-3}\ln\jbrac{\tau}$ is integrable, Gronwall yields
    \begin{align*}
        |\nabla_v X(s) - (s-t)I| &\les \eps \int_s^t (t-\tau)(\tau-s)\jbrac{\tau}^{-3}\ln\jbrac{\tau} d\tau \les \eps (t-s).
    \end{align*}
    Similarly, 
    \begin{align*}
        |\nabla_v V(s) - I| &\leq \int_s^t |\nabla_v X(\tau)\nabla_x E(\tau,X(\tau))d\tau| \\
        &\leq \int_s^t (1+\eps)(\tau-t) |\nabla_x E(\tau,X(\tau))| d\tau\\
        &\les \int_s^t \eps(1+\eps)(\tau-t)\jbrac{\tau}^{-3}\ln\jbrac{\tau} d\tau \les \eps.
    \end{align*}
    Bounding $x$ derivatives is nearly identical, so we omit the proof. To obtain the lower bound on the determinants, we write 
    \begin{align*}
        |\det\nabla_v X(s)| &= \left|(t-s)^3\det\left(\big( \nabla_v X(s)(t-s)^{-1} - I\big) + I\right)\right|,\\
        |\det\nabla_v V(s)| &= \left| \det\big((\nabla_v V(s)-I) +I\big) \right|.
    \end{align*}
    However, $| \nabla_v X(s)(t-s)^{-1} - I|+|\nabla_v V(s)-I| \les \eps$, so provided that $\eps$ is chosen sufficiently small, the continuity of $M\mapsto|\det M|$ gives the desired bounds.\\
\end{proof}

We will also need an estimate on the support of the profile $g(t,x,v) = f(t,x+vt,v)$. To this end, denote 
\begin{align*}
    \mathcal{X}(s,t,x,v) = X(s,t,x+vt,v), \quad \mathcal{V}(s,t,x,v) = V(s,t,x+vt,v).
\end{align*}
By the method of characteristics, 
\begin{equation}\label{gcharrep}
    g(t,x,v) = f_0(\mathcal{X}(0,t,x,v),\mathcal{V}(0,t,x,v)).
\end{equation}

\begin{lem}\label{gcharbds}
    There holds 
    \begin{align*}
        \sup\{|x|:g(t,x,v)\neq 0 \text{ for some } v\} &\leq k+C\eps\ln\jbrac{t},\\
        \sup\{|v|:g(t,x,v)\neq 0 \text{ for some } x\} &\leq k+C\eps,
    \end{align*}
    where $k$ is such that $f_0$ is supported in $|x|,|v|\leq k$.
\end{lem}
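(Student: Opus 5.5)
We work under the bootstrap hypothesis of Proposition \ref{bootstrap}, so $\|E(\tau)\|_{L^\infty}\leq \eps\jbrac{\tau}^{-2}$ on $[0,T]$. By (\ref{gcharrep}), $g(t,x,v)\neq 0$ forces $(\mathcal{X}(0,t,x,v),\mathcal{V}(0,t,x,v))$ to lie in the support of $f_0$, so $|\mathcal{X}(0)|\leq k$ and $|\mathcal{V}(0)|\leq k$. The strategy is to run the characteristics \emph{forward} from time $0$: for such points, $(x+vt,v)=\Phi_{t,0}(y,w)$ with $(y,w):=\Phi_{0,t}(x+vt,v)$ and $|y|,|w|\leq k$. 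We then bound $v$ and $x=X(t,0,y,w)-tV(t,0,y,w)$ in terms of $(y,w)$ using only the decay of $E$.

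\textbf{Velocity bound.} From (\ref{charodes}),
$$V(t,0,y,w)=w+\int_0^t E(\tau,X(\tau,0,y,w))\,d\tau.$$
This gives $|v|\leq |w|+\eps\int_0^\infty\jbrac{\tau}^{-2}d\tau\leq k+C\eps$, which is the second claim.

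\textbf{Position bound.} Integrating once more,
$$X(t)=y+tw+\int_0^t (t-\tau)E(\tau,X(\tau))\,d\tau .$$
Subtracting
$$tV(t)=tw+t\int_0^t E(\tau,X(\tau))\,d\tau$$
gives the key cancellation
$$x=X(t)-tV(t)=y-\int_0^t \tau\, E(\tau,X(\tau))\,d\tau .$$
Consequently
$$|x|\leq k+\eps\int_0^t \tau\jbrac{\tau}^{-2}\,d\tau\leq k+C\eps\ln\jbrac{t}.$$

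The only subtle point is precisely this cancellation of the $t\int E$ terms. If one instead bounds $X(t)$ and $tV(t)$ separately, the best available estimate is $|x|\lesssim t\eps$, which is far too weak. Writing the difference through the integral identity with weight $\tau$, which is the time integral of $\frac{d}{d\tau}(X-\tau V)=-\tau E$, makes the estimate immediate.

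Both bounds hold uniformly in $x$, $v$ and $t\in[0,T]$ with constants independent of $T$, which gives the lemma.
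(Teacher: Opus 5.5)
Your proof is correct and is essentially the paper's argument. The paper writes $\mathcal{X}(0,t,x,v)-x=\int_0^t\int_s^t E(\tau,\mathcal{X}(\tau,t,x,v))\,d\tau\,ds$, which after Fubini is exactly your identity $y-x=\int_0^t \tau E(\tau,X(\tau))\,d\tau$ along the same trajectory, and the velocity bound is handled identically.
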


\begin{proof}
    We have
    \begin{align*}
        \mathcal{X}(0,t,x,v) = x + \int_0^t (v-\mathcal{V}(s,t,x,v)) ds = x + \int_0^t \int_s^t E(\tau,\mathcal{X}(\tau,t,x,v))d\tau.
    \end{align*}
    Therefore, using the bootstrap on $E$,
    $$|\mathcal{X}(0,t,x,v)-x|\leq \int_0^t \int_s^t C\eps \jbrac{\tau}^{-2}d\tau \leq C\eps \ln\jbrac{t}.$$
    If $|x|>k+C\eps\ln\jbrac{t}$, then this implies
    $$|\mathcal{X}(0,t,x,v)|\geq |x|-|\mathcal{X}(0,t,x,v)-x|> k.$$
    Hence, if $|x|>k+C\eps\ln\jbrac{t}$, then $g(t,x,v)=0$ from the compact support of $f_0$ and the representation (\ref{gcharrep}). This gives the desired $x$ support bound. For the $v$ support bound, we estimate similarly to obtain
    $$|\mathcal{V}(0,t,x,v)-v|\leq \int_0^t C\eps\jbrac{s}^{-2}ds \leq C\eps$$
    and therefore a similar argument used to bound the $x$ support yields the result.    
\end{proof}

Finally, as a consequence of the support bounds we have the following estimate for weighted norms of $g$.

\begin{lem}\label{vlasovweighted}
    For any $1\leq p\leq \infty$ and $0\leq q<\infty$, there holds
    $$\|\jbrac{x}^q g(t)\|_{L^p_{x,v}}\les \eps_0 \ln^{q+3/p}\jbrac{t}.$$
\end{lem}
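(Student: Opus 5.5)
The plan is to combine the $L^\infty$ bound on $g$, which transport preserves, with the support bounds of Lemma \ref{gcharbds}.

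\textbf{Pointwise and $L^1$ bounds.} From the characteristic representation (\ref{gcharrep}) we have $\|g(t)\|_{L^\infty_{x,v}}\le\|f_0\|_{L^\infty}\le\eps_0$. The map $(x,v)\mapsto(x+vt,v)$ preserves Lebesgue measure, and so does the Lagrangian flow $\Phi_{0,t}$ (Proposition \ref{geoflow}). Hence $\|g(t)\|_{L^p_{x,v}}=\|f_0\|_{L^p_{x,v}}\le\eps_0$ for every $1\le p\le\infty$, using interpolation between $L^1$ and $L^\infty$.

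\textbf{Inserting the weight.} By Lemma \ref{gcharbds}, $g(t)$ is supported in the set
$$\{|x|\le k+C\eps\ln\jbrac{t}\}\times\{|v|\le k+C\eps\}.$$
On this set $\jbrac{x}\les \ln\jbrac{t}$, since $\ln\jbrac{t}\ge\ln\sqrt2>0$ and so the constant $k$ can be absorbed. Therefore
$$\|\jbrac{x}^q g(t)\|_{L^p_{x,v}}\le \sup_{\mathrm{supp}\,g(t)}\jbrac{x}^q\,\|g(t)\|_{L^p}\les \eps_0\ln^q\jbrac{t}.$$

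\textbf{Recovering the claimed power.} The bound above already has exponent $q$, which is at most $q+3/p$. Since $\ln\jbrac{t}$ is bounded below by a positive constant, $\ln^q\jbrac{t}\les\ln^{q+3/p}\jbrac{t}$, and the stated estimate follows.

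Alternatively, the claimed form comes out naturally from the $L^\infty$ bound alone. One estimates
$$\|\jbrac{x}^q g\|_{L^p}\le \|g\|_{L^\infty}\Big(\int_{\mathrm{supp}\,g}\jbrac{x}^{qp}\Big)^{1/p}.$$
The $x$-volume of the support is $\les\ln^3\jbrac{t}$ and the $v$-volume is $O(1)$, which gives $\eps_0\ln^{q+3/p}\jbrac{t}$.

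\textbf{Main obstacle.} There is no real obstacle. The only care needed is that the implicit constants depend on $k$, the support radius of $f_0$, and that this dependence is absorbed into $\les$.
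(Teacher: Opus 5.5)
Your proof is correct, and your ``alternative'' argument is exactly the paper's. The paper bounds $g$ pointwise by $\|f_0\|_{L^\infty}$ and multiplies $\ln^q\jbrac{t}$ by the $L^p_x$ norm of the indicator of the support from Lemma \ref{gcharbds}, whose $x$-volume is $O(\ln^3\jbrac{t})$.

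Your main route differs slightly. Instead of $\|g(t)\|_{L^\infty}\,|\mathrm{supp}\,g(t)|^{1/p}$, you use that both the shear $(x,v)\mapsto(x+vt,v)$ and the Lagrangian flow preserve measure. This gives $\|g(t)\|_{L^p}=\|f_0\|_{L^p}\le\eps_0$, so you only pay $\sup\jbrac{x}^q\les\ln^q\jbrac{t}$ on the support. The result is the sharper bound $\eps_0\ln^{q}\jbrac{t}$, with no $\ln^{3/p}$ loss.

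The cost is invoking measure preservation of $\Phi_{0,t}$. That property is stated in the paragraph introducing the flow in Section 3.1, as a consequence of the divergence-free structure of $(v,E)$, not in Proposition \ref{geoflow} as you cite. The sharper bound is harmless but not needed in the paper, since all logarithmic factors are later absorbed by the open condition on $\delta$.
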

\begin{proof}
    Using Lemma \ref{gcharbds}, $g$ is supported in $|x|\les \ln\jbrac{t}$ and $|v|\les 1$, so 
    $$\|\jbrac{x}^q g(t)\|_{L^p_{x,v}} \les \|f_0\|_{L^\infty_{x,v}}\|\ln^q\jbrac{t} \chi_{|x|\les\ln\jbrac{t}}\|_{L^p_x} \les \eps_0\ln^{q+3/p}\jbrac{t}.$$
\end{proof}

\subsection{Decay estimates for the fermions}

In this section we obtain decay estimates for $\rho$ and the potential induced by the fermions.

\begin{lem}\label{rhodecay}
    There holds
    $$\|\rho(t)\|_{L^\infty} \les \eps_0 \jbrac{t}^{-3}.$$
\end{lem}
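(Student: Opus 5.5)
We follow the classical Bardos--Degond dispersive argument, writing $\rho$ through the characteristics and changing variables in velocity. By the Lagrangian representation, $f(t,x,v)=f_0(X(0,t,x,v),V(0,t,x,v))$, so
$$\rho(t,x)=\int f_0\big(X(0,t,x,v),V(0,t,x,v)\big)\,dv .$$
We treat short and long times separately.

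\textbf{Short times, $t\le 1$.} Here we use the $v$-support of $f(t)$. Since $|V(0,t,x,v)-v|\les \eps$ (integrate $|E|\le\eps\jbrac{\tau}^{-2}$) and $f_0$ is supported in $|v|\le k$, the integrand vanishes unless $|v|\le k+C\eps$. This gives
$$\rho(t,x)\les \|f_0\|_{L^\infty}\les\eps_0,$$
which is the claim, since $\jbrac{t}^{-3}\sim 1$ there.

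\textbf{Long times, $t\ge1$.} We change variables $w=X(0,t,x,v)$ for fixed $x$. By Proposition \ref{geoflow} with $s=0$ we have
$$|\nabla_v X(0,t,x,v)+tI|\les\eps t .$$
For $\eps$ small, the matrix $-t^{-1}\nabla_vX$ is within $O(\eps)$ of the identity uniformly in $v$. So the map $v\mapsto -t^{-1}X(0,t,x,v)$ is a global $C^1$ map with derivative close to $I$, hence injective: its difference with $v$ is a contraction in $v$, or one can integrate along segments. Its Jacobian satisfies
$$|\det\nabla_v X|\ge c\,t^3 .$$
Therefore
$$\rho(t,x)\le \int \big|f_0(X(0),V(0))\big|\,\frac{dw}{|\det\nabla_vX|}\les t^{-3}\int \sup_{v'}|f_0(w,v')|\,dw\les t^{-3}k^3\|f_0\|_{L^\infty}\les \eps_0 t^{-3}.$$
Here we used that $w=X(0)$ lies in the $x$-support ball $|w|\le k$ of $f_0$.

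\textbf{Main obstacle.} The delicate point is the determinant bound. The displayed estimate in Proposition \ref{geoflow} controls a sum of two determinants, so we will instead derive $|\det\nabla_v X(0,t)|\ge \tfrac12 t^3$ directly. We get it from $\nabla_vX(0)=-t(I+O(\eps))$ and continuity of the determinant, as in that proof. We also need global injectivity of $v\mapsto X(0,t,x,v)$ to justify the change of variables; the near-identity derivative estimate, uniform in $v$, gives this.

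Alternatively, we can avoid injectivity altogether. Bound $\rho$ by integrating over the set $\{v: X(0,t,x,v)\in B_k\}$, and note that this set has diameter $\les t^{-1}$ by the lower Lipschitz bound $|X(0,t,x,v_1)-X(0,t,x,v_2)|\ge \tfrac t2|v_1-v_2|$. This bound follows from the same matrix estimate. That gives measure $\les t^{-3}$ immediately, which is perhaps the cleanest route.
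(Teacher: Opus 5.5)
Your proposal is correct and is essentially the paper's Bardos--Degond argument. For large $t$ both change variables $v\mapsto X(0,t,x,v)$ using $|\det\nabla_v X(0,t,x,v)|\gtrsim t^3$ from Proposition \ref{geoflow}, and for $t\le 1$ both use an $O(1)$ bound; you get it from the velocity-support bound where the paper changes variables $v\mapsto V(0,t,x,v)$, which amounts to the same thing. Your explicit derivation of the single determinant bound and of global injectivity (or, in your alternative, the diameter bound on $\{v: X(0,t,x,v)\in B_k\}$) fills in details the paper leaves tacit, since Proposition \ref{geoflow} as stated only bounds a sum of determinants and the paper's change of variables implicitly assumes injectivity.
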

\begin{proof}
    We make the change of variables $\mathcal{A} : v\mapsto X := X(0,t,x,v)$, which has Jacobian $|\det \nabla_v X(0,t,x,v)|\gtrsim t^3$ from Proposition \ref{geoflow}, so that
    \begin{align*}
        \rho(t,x) &= \int f_0(X(0,t,x,v),V(0,t,x,v))dv \\
        &\les t^{-3} \int  f_0(X,V(0,t,x,\mathcal{A}^{-1}(X))) dX \les \|f_0\|_{L^1_x L^\infty_v}t^{-3}.
    \end{align*}
    If $t\leq 1$, then we can make the change of variables $\mathcal{B}: v\mapsto V:= V(0,t,x,v)$, which has Jacobian $|\det \nabla_v V(0,t,x,v)|\gtrsim 1$, so that
    \begin{align*}
        \rho(t,x) &= \int f_0(X(0,t,x,v),V(0,t,x,v))dv \\
        &\les \int f_0(X(0,t,x,\mathcal{B}^{-1}(V)),V) dV \les \|f_0\|_{L^1_v L^\infty_x}.
    \end{align*}
\end{proof}

The next lemma gives decay of the potential induced by the fermions.

\begin{lem}\label{potdecayest}
    For any $r\in [0,2)$ there holds
    \begin{align*}
        \big\||\nabla|^r V*\rho\big\|_{L^\infty} \les \eps_0\jbrac{t}^{-1-r}.
    \end{align*} 
\end{lem}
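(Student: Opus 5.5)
Note that $|\nabla|^r V*\rho = c\,|x|^{-(1-r)}*\rho$ for $r\in(0,1)$, that $r=0$ is the plain Coulomb potential, and that $r\in[1,2)$ needs more care. My plan is to work with the Fourier multiplier directly: $|\nabla|^r V*\rho$ has symbol $|\xi|^{r-2}$. For $0\le r<2$ this is the Riesz potential of order $2-r\in(0,2]$, so
$$|\nabla|^r V*\rho = c_r\,|x|^{-(1+r)}*\rho \qquad (\text{kernel } |x|^{-(3-(2-r))}),$$
with $\alpha = 1+r\in[1,3)$.

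\textbf{Large times.} I apply Lemma \ref{potest}(i) with $\alpha=1+r$, $p=1$ and $q=\infty$. This is admissible since $r_\alpha = \frac{3}{2-r}\in[3/2,\infty)$, so $1<r_\alpha<\infty$. The lemma gives
$$\||\nabla|^r V*\rho\|_{L^\infty}\les \|\rho\|_{L^1}^{\theta}\|\rho\|_{L^\infty}^{1-\theta},\qquad \theta=\frac{1}{r_\alpha}=\frac{2-r}{3}.$$
Mass conservation ($f$ is transported by a measure-preserving flow) gives $\|\rho(t)\|_{L^1}=\|f_0\|_{L^1_{x,v}}\le\eps_0$. Lemma \ref{rhodecay} gives $\|\rho(t)\|_{L^\infty}\les\eps_0\jbrac{t}^{-3}$. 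Hence the bound is
$$\eps_0\jbrac{t}^{-3(1-\theta)} = \eps_0\jbrac{t}^{-3+(2-r)} = \eps_0\jbrac{t}^{-1-r},$$
which is exactly the claim.

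\textbf{Small times.} For $t\le 1$ the same interpolation works, using the $t\le1$ case of Lemma \ref{rhodecay}, where $\|\rho\|_{L^\infty}\les\|f_0\|_{L^1_vL^\infty_x}\les\eps_0$ by compact support and $L^\infty$ smallness. Since $\jbrac{t}\sim1$ there, this is consistent with the claimed rate.

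\textbf{Main obstacle.} The only delicate point is justifying the identification of $|\nabla|^r V*$ with a positive Riesz kernel for $r\ge 1$. There the symbol $|\xi|^{r-2}$ is still locally integrable and the kernel $|x|^{-(1+r)}$ is locally integrable since $1+r<3$, so the identity holds, for instance on Schwartz functions and then by density for $\rho\in L^1\cap L^\infty$. The endpoint $r\to2$ is excluded precisely because $\alpha\to3$ and $r_\alpha\to\infty$. Once this identification is in place, the lemma follows directly from Lemma \ref{potest}(i), conservation of mass, and Lemma \ref{rhodecay}.
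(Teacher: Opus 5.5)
Your proof is correct and is essentially the paper's argument: Lemma \ref{potest}(i) with $\alpha=1+r$, $p=1$, $q=\infty$, combined with conservation of mass and Lemma \ref{rhodecay}, gives $\|\rho\|_{L^1}^{(2-r)/3}\|\rho\|_{L^\infty}^{(1+r)/3}\les \eps_0\jbrac{t}^{-1-r}$. Two small remarks: your opening claim that the kernel is $|x|^{-(1-r)}$ is a slip (the kernel is $|x|^{-(1+r)}$ for all $r\in[0,2)$, as you then correctly use), and the separate small-time case is unnecessary because Lemma \ref{rhodecay} already holds for all $t\ge 0$ with the weight $\jbrac{t}^{-3}$.
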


\begin{proof}
    The proof is direct from the first claim of Lemma \ref{potest} with $p=1$, $q=\infty$, conservation of mass (from the volume preserving property of the characteristic flow), and Lemma \ref{rhodecay} upon noting that $||\nabla|^r V(x)|\les |x|^{-1-r}$:
    $$\big\||\nabla|^r V*\rho(t)\big\|_{L^\infty} \les \|\rho(t)\|_{L^1}^{1-(r+1)/3}\|\rho(t)\|_{L^\infty}^{(r+1)/3} \les \eps_0 \jbrac{t}^{-1-r}.$$
\end{proof}

\subsection{Asymptotic velocity characteristics}

Now we construct the asymptotic velocity characteristics; the analysis here is very similar to \cite{pankavich_asymptotic_2022}. We will make use of the results in the following proposition extensively when constructing the asymptotic spatial characteristics later.

\begin{prop}\label{Vlim}
    The function $$V_\infty(s,x,v):=\lim_{t\to\infty}V(t,s,x,v) = v+\int_s^\infty E(\tau,X(\tau,s,x,v))d\tau$$ is a well-defined function in $C^{1}_{x,v}$ such that, for all $0\leq s\leq t$ and $(x,v)\in \R^6$,
    \begin{gather*}
        |V_\infty(s,x,v)-V(t,s,x,v)|\leq \eps\jbrac{t}^{-1},\\
        |\nabla_v V_\infty(s,x,v) - \nabla_v V(t,s,x,v)|\les \eps\jbrac{t}^{-1}\ln\jbrac{t},\\
        |\nabla_x V_\infty(s,x,v)-\nabla_x V(t,s,x,v)|\les \eps\jbrac{t}^{-2}\ln\jbrac{t}.
    \end{gather*}
    Moreover, by setting $t=s$ in the above, we also have 
    \begin{gather*}
        |\nabla_x V_\infty(s,x,v)| \les \eps\jbrac{s}^{-2}\ln\jbrac{s},\\
        |\nabla_v V_\infty(s,x,v) - I|\les \eps\jbrac{s}^{-1}\ln\jbrac{s},
    \end{gather*}
    which implies that $$|\det \nabla_v V_\infty(s,x,v)-1| \les \eps\jbrac{s}^{-1}\ln\jbrac{s}$$
    and if $\eps$ is small enough, 
    $$|\nabla_v V_\infty(s,x,v)^{-1}|\leq 2,$$
    and $v\mapsto V_\infty(s,x,v)$ is bi-Lipschitz and hence injective: 
    $$|V_\infty(s,x,v_1)-V_\infty(s,x,v_2)|\geq \frac12 |v_1-v_2|.$$ 
\end{prop}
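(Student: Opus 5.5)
The plan is to work directly with the integral representation of the velocity characteristic and the bootstrap decay assumptions on $E$, $\nabla_x E$. Fix $s$ and write $V(t,s,x,v) = v + \int_s^t E(\tau,X(\tau,s,x,v))\,d\tau$. Since $\|E(\tau)\|_{L^\infty}\leq \eps\jbrac{\tau}^{-2}$ is integrable, the limit $t\to\infty$ exists uniformly in $(x,v)$, which gives the formula for $V_\infty$. The first estimate follows from the tail bound $|V_\infty - V(t)|\leq \int_t^\infty \eps\jbrac{\tau}^{-2}d\tau\leq \eps\jbrac{t}^{-1}$. For the derivative estimates, differentiate under the integral:
$$\nabla_{v} V(t,s,x,v) = I + \int_s^t \nabla_x E(\tau,X(\tau))\,\nabla_v X(\tau,s,x,v)\,d\tau ,$$
and similarly with $\nabla_x$. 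Proposition \ref{geoflow} gives $|\nabla_v X(\tau,s)|\les |\tau-s|\les \jbrac{\tau}$ and $|\nabla_x X(\tau,s)|\les 1$.

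The tails are then bounded by
$$\int_t^\infty \eps\jbrac{\tau}^{-3}\ln\jbrac{\tau}\,\jbrac{\tau}\,d\tau\les \eps\jbrac{t}^{-1}\ln\jbrac{t} \quad\text{and}\quad \int_t^\infty \eps\jbrac{\tau}^{-3}\ln\jbrac{\tau}\,d\tau\les\eps\jbrac{t}^{-2}\ln\jbrac{t}.$$
The $\nabla_x$ bound is exactly the claimed rate.

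The differentiated integrals converge uniformly, so the limits of $\nabla V(t)$ exist uniformly on compact sets. They are continuous because $\nabla_x E$ is continuous (it lies in $C^{0,\alpha}$) and the flow is $C^1$. Hence $V_\infty\in C^1$, with derivative equal to the limit of the derivatives.

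\textbf{The main subtlety} is the bound $|\nabla_v X(\tau,s)|\les\jbrac{\tau}$ uniformly in $s$. It holds since $|\tau-s|\leq \tau$ for $\tau\geq s\geq 0$. It also yields the estimates at $t=s$, where $\nabla_v V(s,s)=I$ and $\nabla_x V(s,s)=0$.

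For the consequences, I will use that $|\nabla_v V_\infty - I|\les \eps$ is small. The determinant expansion then gives $|\det \nabla_v V_\infty -1|\les|\nabla_vV_\infty-I|$, and the Neumann series gives $|(\nabla_v V_\infty)^{-1}|\leq 2$ once $C\eps\leq 1/2$. For injectivity, write
$$V_\infty(v_1)-V_\infty(v_2) = (v_1-v_2) + \int_0^1 \big(\nabla_v V_\infty(v_2+\theta(v_1-v_2)) - I\big)(v_1-v_2)\,d\theta .$$
The second term has norm at most $\frac12|v_1-v_2|$ for $\eps$ small, which gives the lower bound. The upper Lipschitz bound is immediate from $|\nabla_v V_\infty|\leq 2$.
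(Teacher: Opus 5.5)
Your proposal is correct and follows essentially the same route as the paper: you bound the tails of $\int_t^\infty E$ and of the differentiated integrals using Proposition \ref{geoflow}, get $C^1$ from uniform convergence of the derivatives, set $t=s$, and then use a perturbation-of-identity argument for the determinant, the inverse and injectivity. Your weight $|\nabla_v X(\tau,s,x,v)|\les|\tau-s|\leq\jbrac{\tau}$ is the right one (the paper's display writes $(\tau-t)$, evidently a typo), and your Neumann-series bound on $(\nabla_v V_\infty)^{-1}$ is a cleaner version of the paper's.
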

\begin{proof}
    The decay of $E$ shows that $V_\infty$ is well-defined. Moreover, the fundamental theorem of calculus yields
    \begin{align*}
        |V_\infty(s,x,v)-V(t,s,x,v)|\leq \int_t^\infty |E(\tau,X(\tau,s,x,v))|d\tau \les \eps \jbrac{t}^{-1}.
    \end{align*}
    Similarly, the decay of $\nabla_x E$ and Propostion \ref{geoflow} give
    \begin{align*}
        |\nabla_x V_\infty(s,x,v) - \nabla_x V(t,s,x,v)| &= \left|\int_t^\infty \nabla_x E(\tau,X(\tau))\nabla_x X(\tau) d\tau\right| \\
        &\les \int_t^\infty \eps\jbrac{\tau}^{-3}\ln\jbrac{\tau} ds \les \eps \jbrac{t}^{-2}\ln\jbrac{t}.
    \end{align*}
    and
    \begin{align*}
        |\nabla_v V_\infty(s,x,v) - \nabla_v V(t,s,x,v)| &= \left|\int_t^\infty \nabla_x E(\tau,X(\tau))\nabla_v X(\tau) d\tau\right| \\
        &\les \int_t^\infty \eps(\tau-t)\jbrac{\tau}^{-3}\ln\jbrac{\tau} ds \les \eps \jbrac{t}^{-1}\ln\jbrac{t}.
    \end{align*}
    Moreover, the fact that $V(t,s,x,v)\in C^{1}_{x,v}$ (since $E\in C^{1,\delta})$ shows that $V_\infty$ is as well. Setting $t=s$ in the above gives 
    \begin{gather*}
        |\nabla_x V_\infty(s,x,v)| \les \eps\jbrac{s}^{-2}\ln\jbrac{s},\\
        |\nabla_v V_\infty(s,x,v) - I|\les \eps\jbrac{s}^{-1}\ln\jbrac{s}.
    \end{gather*}
    The second bound implies that, provided $\eps$ is small enough, $\nabla_v V_\infty$ is an invertible matrix with $|\det(\nabla_v V_\infty)|\geq \frac12$, and $|\det \nabla_v V_\infty(s)-1|\les \eps\jbrac{s}^{-1}\ln\jbrac{s}$ since $M\mapsto\det M$ is Lipschitz. Moreover, the norm of the inverse matrix is uniformly bounded:
    $$|\nabla_v V_\infty(s,x,v)^{-1}|\les \frac{1}{1-|\nabla_v V_\infty(s,x,v)|} \leq \frac{1}{1-C\eps\jbrac{s}^{-1}\ln\jbrac{s}}\les 1.$$
    
    To prove injectivity, obviously the inverse function theorem gives local injectivity, but we need global injectivity. The fundamental theorem of calculus yields
    \begin{align*}
        V_\infty(s,x,v_1)-V_\infty(s,x,v_2) = \int_0^1 \nabla_v V_\infty(s,x,rv_1+(1-r)v_2)(v_1-v_2)dr.
    \end{align*}
    Therefore, 
    \begin{align*}
        |V_\infty(s,x,v_1)-V_\infty(s,x,v_2)| &\geq |v_1-v_2|-\int_0^1 |I-\nabla_v V_\infty(s,x,rv_1+(1-r)v_2)||v_1-v_2|ds\\
        &\geq |v_1-v_2|-C\eps \jbrac{s}^{-1}\ln\jbrac{s}|v_1-v_2|.
    \end{align*}
    This gives injectivity and the bi-Lipschitz property as long as $\eps$ is small enough.
\end{proof}

\begin{remark}
    The proof also shows that injectivity and the Jacobian lower bound on $V_\infty$ also hold when $s\gg 1$ rather than $\eps\ll 1$.
\end{remark}

\subsection{Asymptotic potential generated by the fermions}

In this section we complete the final ingredient needed on the Vlasov side in order to prove modified scattering for the boson subsystem: the asymptotic behavior of the potential $V*\rho$. Let us sketch the results which will be proved in this section.\\

First, using the convergence of the velocity characteristics we show that the scattering mass/spatial average $m(t,v)$ converges to a limit $m_\infty(v)$ (Proposition \ref{scatmasswk}) and as a consequence (Proposition \ref{rhoconvwk}), 
\begin{equation}\label{rhoapprox}
    \rho(t,x)\approx t^{-3} m(t,\tfrac{x}{t}) \approx t^{-3} m_\infty(\tfrac{x}{t}), \quad t\gg 1.
\end{equation}
Note that the first $\approx$ is an $=$ for free solutions by a change of variables. Due to the minimal regularity of $f_0$, these claims hold in the topology of weak convergence of measures.\\

The main result from this section is the strong $L^\infty$ convergence of $V*m(t,v)$ to $V*m_\infty(v)$ in Lemma \ref{scatmass}. Logically speaking, this is the only ingredient necessary for modified scattering of the bosons, but we include the other results because they are of independent interest and to better clarify the asymptotic behavior. As a consequence, we also show that 

$$(V*\rho)(t,x) \approx t^{-1}(V*m_\infty)(\tfrac{x}{t}), \quad t\gg 1,$$

as suggested by (\ref{rhoapprox}). This serves as the key motivation behind why $\ln(t)(V*m_\infty)(\tfrac{x}{t})$ appears in the long-range phase correction.\\

First we show the convergence towards $m_\infty$, proving (\ref{masymp}).

\begin{prop}\label{scatmasswk}
    There exists $m_\infty \in L^\infty$ with compact support such that
    $$m(t,v)dv\wkly m_\infty(v)dv \quad \text{ as }t\to\infty,$$
    weakly in the sense of measures. 
\end{prop}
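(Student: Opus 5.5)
Fix a base time $s$ (say $s=0$) and write $m(t)$ as a pushforward under the Lagrangian flow. For any test function $\varphi\in C_c(\R^3)$ (or $C_b$), measure preservation of $\Phi_{t,s}$ gives
$$\int \varphi(v)\,m(t,v)\,dv=\iint \varphi(v) f(t,x,v)\,dx\,dv=\iint \varphi\big(V(t,s,x,v)\big)\,f(s,x,v)\,dx\,dv .$$
By Proposition \ref{Vlim}, $V(t,s,x,v)\to V_\infty(s,x,v)$ uniformly in $(x,v)$, with rate $\eps\jbrac{t}^{-1}$. Since $f(s)\in L^1_{x,v}$, dominated convergence (or uniform continuity of $\varphi$ with compact support) shows that the right-hand side converges to $\iint \varphi(V_\infty(s,x,v)) f(s,x,v)\,dx\,dv$. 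This defines the limit measure $\mu_\infty$ as the pushforward of $f(s)\,dx\,dv$ under the map $(x,v)\mapsto V_\infty(s,x,v)$. It does not depend on $s$, because it is the weak limit of $m(t)$.

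The remaining task is to show that $\mu_\infty$ is absolutely continuous with an $L^\infty$, compactly supported density $m_\infty$. For each fixed $x$, Proposition \ref{Vlim} says $v\mapsto V_\infty(s,x,v)$ is injective, $C^1$, with $|\det\nabla_vV_\infty-1|\les\eps$. So we can change variables $w=V_\infty(s,x,v)$ in the inner $v$-integral:
$$\int\varphi(V_\infty(s,x,v))f(s,x,v)\,dv=\int_{V_\infty(s,x,\R^3)}\varphi(w)\,\frac{f(s,x,v(w))}{|\det\nabla_vV_\infty(s,x,v(w))|}\,dw .$$
Integrating in $x$ and applying Fubini identifies
$$m_\infty(w)=\int \mathbf 1_{w\in V_\infty(s,x,\R^3)}\,\frac{f(s,x,v)}{|\det\nabla_v V_\infty(s,x,v)|}\Big|_{v=V_\infty(s,x,\cdot)^{-1}(w)}\,dx .$$
The denominator is at least $1/2$, and $f(s)$ is supported in a compact set whose $x$-projection is bounded (Lemma \ref{gcharbds}). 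Hence $\|m_\infty\|_{L^\infty}\les\|f_0\|_{L^\infty}\,|\{x:\ f(s,x,\cdot)\ne0\}|<\infty$. It is convenient to take $s=0$, where the support is just that of $f_0$. Compact support of $m_\infty$ follows from $|V_\infty(0,x,v)-v|\les\eps$ together with the compact velocity support of $f_0$. Measurability of $m_\infty$ needs a brief check: it is the density of the pushforward of $f_0\,dx\,dv$ under the $C^1$ map $(x,v)\mapsto(x,V_\infty(0,x,v))$, followed by marginalizing in $x$. That map is a $C^1$ diffeomorphism onto its image, because it is injective in $v$ for each fixed $x$.

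The main obstacle is this last step. It requires global injectivity of $v\mapsto V_\infty(0,x,v)$ and the Jacobian lower bound, so that the pushforward has a bounded density rather than a singular one. Proposition \ref{Vlim} supplies both. Beyond that, one must upgrade from test functions in $C_c$ to weak convergence of measures. Here no mass escapes, since $V(t,0,x,v)$ stays in a fixed compact set for $(x,v)$ in the support of $f_0$. So tightness holds, and convergence against $C_b$ follows.
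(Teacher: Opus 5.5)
Your proposal is correct and follows the paper's proof essentially step for step. You rewrite $\int\varphi\, m(t)\,dv$ as $\iint \varphi(V(t,0,x,v))f_0\,dx\,dv$ via measure preservation, pass to the limit using the uniform convergence $V(t)\to V_\infty$ from Proposition \ref{Vlim}, and identify $m_\infty$ by changing variables through the injective map $v\mapsto V_\infty(0,x,v)$ with its Jacobian lower bound, which also gives $m_\infty\in L^\infty$ with compact support. Your additional remarks (the indicator of the image, the measurability check, and tightness to test against $C_b$) are harmless refinements that the paper leaves implicit.
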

\begin{proof}
Let $\varphi\in C_c(\R^3)$. Using the measure preserving property of the flow,
    \begin{align*}
        \int \varphi(v)m(t,v)dv &= \iint \varphi(v)f(t,x,v)dxdv\\
        &= \iint \varphi(v)f_0(X(0,t,x,v),V(0,t,x,v))dxdv\\
        &= \iint \varphi(V(t,0,x,v))f_0(x,v)dxdv.
    \end{align*}
    Let $\mathbb{V}_x$ be the map $v\mapsto V_\infty(0,x,v)$. Then, using Proposition \ref{Vlim}, namely using the convergence of $V(t)$ to $V_\infty$
    and the invertibility of $\mathbb{V}_x$, we get
    \begin{align*}
        \lim_{t\to\infty}\int \varphi(v)m(t,v)dv &= \iint \varphi(V_\infty(0,x,v)) f_0(x,v)dxdv\\
        &= \iint \varphi(v)f_0(x,\mathbb{V}_x^{-1}(v))\big|\det (\nabla_v\mathbb{V}_x)(\mathbb{V}_x^{-1}(v))|^{-1}dxdv.
    \end{align*}
    Therefore, we obtain $m(t,v)dv\wkly m_\infty(v)dv$, with 
    \begin{equation}\label{minfdef}
        m_\infty(v)=\int f_0(x,\mathbb{V}_x^{-1}(v))\big|\det (\nabla_v\mathbb{V}_x)(\mathbb{V}_x^{-1}(v))|^{-1}dx.
    \end{equation}
    By the compact $x$ support of $f_0$, the fact $f_0\in L^\infty$, and the Jacobian lower bound in Proposition \ref{Vlim}, we have $m_\infty\in L^\infty$. Moreover, $m_\infty$ is compactly supported as $\mathbb{V}_x$ is bi-Lipschitz so its inverse maps bounded sets to bounded sets. 
\end{proof}

 The next result essentially says that 
$$\rho(t,x)\approx t^{-3}m_\infty\big(\tfrac{x}{t}\big), \quad t\gg 1,$$
and proves (\ref{rhoasymp}).

\begin{prop}\label{rhoconvwk}
    There holds 
    $$t^3\rho(t,tv)dv\wkly m_\infty(v)dv \quad \text{ as } t\to\infty,$$
    weakly in the sense of measures.
\end{prop}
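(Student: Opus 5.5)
Fix a test function $\varphi\in C_c(\R^3)$. The plan is to rewrite $\int \varphi(v)\,t^3\rho(t,tv)\,dv$ as a phase-space integral against $f_0$ and then pass to the limit using Proposition \ref{Vlim}, exactly as in the proof of Proposition \ref{scatmasswk}. Substituting $x=tv$ gives
\begin{equation*}
    \int \varphi(v)\,t^3\rho(t,tv)\,dv = \int \varphi\big(\tfrac{x}{t}\big)\rho(t,x)\,dx = \iint \varphi\big(\tfrac{x}{t}\big) f(t,x,v)\,dx\,dv.
\end{equation*}
Since $f(t)=f_0\circ\Phi_{0,t}$ and the flow preserves measure, this equals
\begin{equation*}
    \iint \varphi\Big(\tfrac{X(t,0,x,v)}{t}\Big) f_0(x,v)\,dx\,dv.
\end{equation*}
It therefore suffices to show $X(t,0,x,v)/t\to V_\infty(0,x,v)$ uniformly on the compact support of $f_0$. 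Once that holds, the integral tends to $\iint \varphi(V_\infty(0,x,v))f_0\,dx\,dv$, which the proof of Proposition \ref{scatmasswk} already identified with $\int\varphi\, m_\infty\,dv$.

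For the convergence, write
\begin{equation*}
    X(t,0,x,v) = x + \int_0^t V(\tau,0,x,v)\,d\tau,
\end{equation*}
so that
\begin{equation*}
    \frac{X(t,0,x,v)}{t} - V_\infty(0,x,v) = \frac{x}{t} + \frac1t\int_0^t \big(V(\tau,0,x,v)-V_\infty(0,x,v)\big)\,d\tau.
\end{equation*}
Proposition \ref{Vlim} gives $|V(\tau,0,x,v)-V_\infty(0,x,v)|\le \eps\jbrac{\tau}^{-1}$, so the integral term is $O(\eps\, t^{-1}\ln\jbrac{t})$. The term $x/t$ is $O(k/t)$ on $\operatorname{supp} f_0$. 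Both bounds are uniform over $(x,v)\in\operatorname{supp} f_0$.

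Because $\varphi$ is uniformly continuous and $f_0\in L^1_{x,v}$, dominated convergence (or directly uniform continuity times $\|f_0\|_{L^1}$) yields the limit, and the theorem follows. This argument is short and uses only the bootstrap bound on $E$ through Proposition \ref{Vlim}.

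The only subtle point is the identification of the limit with $m_\infty$, and this is already handled in the proof of Proposition \ref{scatmasswk}. Taking that as given, I expect no genuine obstacle.
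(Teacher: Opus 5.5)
Your argument is correct, but it is organized differently from the paper's proof.

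\textbf{The paper's route.} It works with the profile $g$. The change of variables $(u,v)\mapsto(t(v-u),v)$ gives $\int\varphi(v)\,t^3\rho(t,tv)\,dv=\iint\varphi(v+\tfrac{x}{t})\,g(t,x,v)\,dx\,dv$. The $x$-support bound $|x|\les\ln\jbrac{t}$ from Lemma \ref{gcharbds}, together with uniform continuity of $\varphi$, then shows $t^3\rho(t,tv)-m(t,v)\wkly 0$. Finally the statement of Proposition \ref{scatmasswk} is applied as a black box.

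\textbf{Comparison.} In Lagrangian terms, the paper splits the limit into two steps:
\begin{itemize}
\item $X(t)/t\approx V(t)$ with an $O(\ln\jbrac{t}/t)$ error, which is what the support bound on $g$ encodes;
\item $V(t)\to V_\infty$, which happens inside Proposition \ref{scatmasswk}.
\end{itemize}
You merge these into the single estimate $|X(t,0,x,v)/t-V_\infty(0,x,v)|\les (k+\eps\ln\jbrac{t})/t$, obtained by integrating the velocity bound of Proposition \ref{Vlim} in time.

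\textbf{What each buys.} Your version is a shorter, purely Lagrangian one-step argument. It never introduces $g$ or $m(t)$ and uses only Proposition \ref{Vlim} and the compact support of $f_0$. The only cost is that you need the identity $\iint\varphi(V_\infty(0,x,v))f_0\,dx\,dv=\int\varphi\,m_\infty\,dv$. This comes from inside the proof of Proposition \ref{scatmasswk} (equivalently, from formula (\ref{minfdef}) by a change of variables) rather than from its statement, which is harmless.

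The paper's version, in exchange, isolates the intermediate fact $t^3\rho(t,tv)-m(t,v)\wkly 0$. This is a weak form of the first approximation in (\ref{rhoapprox}). It is exactly the comparison that is later made quantitative in Corollary \ref{higherreg} once $f_0\in W^{1,\infty}_{x,v}$.
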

\begin{proof}
Let $\varphi\in C_c(\R^3)$. A change of variables $(u,v)\mapsto(x,v):=(t(v-u),v)$ yields
\begin{align*}
    \int \varphi(v)t^3\rho(t,tv)dv &= \iint \varphi(v)t^3 g(t,t(v-u),u)dudv\\
    &= \iint \varphi(v)g(t,x,v-\tfrac{x}{t})dxdv\\
    &= \iint \varphi(v+\tfrac{x}{t})g(t,x,v)dxdv.
\end{align*}
Therefore, using the support bounds from Lemma \ref{gcharbds} we get
\begin{align*}
    \left|\int \varphi(v)\big( t^3\rho(t,tv)-m(t,v)\big) \right| &\leq \iint \big|\varphi(v+\tfrac{x}{t})-\varphi(v) \big|g(t,x,v)dxdv\\
    &\leq \|f_0\|_{L^1_{x,v}}\sup_{\substack{|x|\leq C\ln\jbrac{t}\\ |v|\leq C}}\big|\varphi(v+\tfrac{x}{t})-\varphi(v) \big| \to 0 \quad \text{ as } t\to\infty,
\end{align*}
in view of the uniform continuity of $\varphi$. Finally, the weak convergence of $m(t,v)$ to $m_\infty(v)$ proved in Proposition \ref{scatmasswk} yields the result.
\end{proof}

\begin{remark}
    It is unclear whether this argument can be extended to $L^p$ convergence for $1\leq p<\infty$. Even if one could prove $L^p$ convergence of $m(t,v)$ to $m_\infty(v)$, the growth of the $x$-support makes it difficult to prove that $m(t,v)=\int g(t,x,v)dx$ can be approximated in large time by $t^3\rho(t,tv)=\int g(t,x,v-\tfrac{x}{t})dx$. It is conceivable that this cannot be accomplished without some regularity in $v$.
\end{remark}

Due to the smoothing effect of the Coulomb potential, we can prove the convergence of $V*m(t,v)$ to $V*m_\infty$ in $L^\infty_v$ rather than in a weak topology. This will be used to construct the phase correction in the proof of Proposition \ref{profilebd} later on.
\begin{lem}\label{scatmass}
    The function $V*m_\infty\in L^\infty$ satisfies
    $$\|V*m(t)-V*m_\infty\|_{L^\infty_v} \les  \eps\eps_0 \jbrac{t}^{-1}.$$
\end{lem}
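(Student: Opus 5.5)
We will write both potentials as integrals over the initial phase-space data and compare the integrands pointwise. By the measure-preserving change of variables used in the proof of Proposition \ref{scatmasswk} (applied with $\varphi = V(w-\cdot)$, which is legitimate after truncation since $V\in L^1_{loc}$ and all supports are compact),
\begin{equation*}
    (V*m(t))(w) = \iint V\big(w-V(t,0,x,v)\big) f_0(x,v)\,dxdv, \qquad (V*m_\infty)(w) = \iint V\big(w-V_\infty(0,x,v)\big) f_0(x,v)\,dxdv,
\end{equation*}
where the second identity follows from the definition (\ref{minfdef}) together with the change of variables $v\mapsto \mathbb{V}_x(v)$. The difference is therefore
\begin{equation*}
    \iint \Big[V\big(w-V(t,0,x,v)\big)-V\big(w-V_\infty(0,x,v)\big)\Big] f_0(x,v)\,dxdv,
\end{equation*}
and Proposition \ref{Vlim} gives $|V(t,0,x,v)-V_\infty(0,x,v)|\le \eps\jbrac{t}^{-1}$ uniformly.

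For the pointwise comparison we use the mean value theorem along the segment $V_r := rV(t,0,x,v)+(1-r)V_\infty(0,x,v)$, $r\in[0,1]$. This gives
\begin{equation*}
    \big|V(w-V(t))-V(w-V_\infty)\big| \les \eps\jbrac{t}^{-1}\int_0^1 |w-V_r(x,v)|^{-2}\,dr .
\end{equation*}
It then suffices to show that, uniformly in $w$, $r$ and $t$,
\begin{equation*}
    \iint |w-V_r(x,v)|^{-2}|f_0(x,v)|\,dxdv \les \eps_0 .
\end{equation*}
We will handle this by a change of variables in $v$ for each fixed $x$. The map $v\mapsto V_r(x,v)$ is a convex combination of $v\mapsto V(t,0,x,v)$ and $v\mapsto \mathbb{V}_x(v)$. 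By Propositions \ref{geoflow} and \ref{Vlim}, both have gradients within $C\eps$ of $I$. Hence $\nabla_v V_r$ is within $C\eps$ of $I$, and the same fundamental-theorem-of-calculus argument as in the injectivity proof of Proposition \ref{Vlim} shows that $v\mapsto V_r(x,v)$ is bi-Lipschitz with Jacobian bounded below by $1/2$. Changing variables $u=V_r(x,v)$ then gives
\begin{equation*}
    \int |w-V_r|^{-2}|f_0|\,dv \les \int |w-u|^{-2}\,|\tilde f_0(x,u)|\,du,
\end{equation*}
where $\tilde f_0(x,\cdot) = f_0(x,\cdot)\circ V_r(x,\cdot)^{-1}$ has the same $L^1_v$ norm up to constants and the same $L^\infty_v$ bound as $f_0(x,\cdot)$.

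By Lemma \ref{potest}(i) with $\alpha=2$, $p=1$, $q=\infty$, this inner integral is bounded by $\|f_0(x,\cdot)\|_{L^1_v}^{1/3}\|f_0(x,\cdot)\|_{L^\infty_v}^{2/3}$. Since $f_0$ has compact $x$-support, integrating in $x$ bounds the total by $\les \|f_0\|_{L^\infty_{x,v}}\les \eps_0$. Multiplying by the prefactor $\eps\jbrac{t}^{-1}$ yields the claimed estimate $\eps\eps_0\jbrac{t}^{-1}$. Note that the constant here depends on the support size $k$ of $f_0$; alternatively one can simply use $\|f_0\|_{L^1_xL^\infty_v}$.

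As a consequence, $V*m_\infty\in L^\infty$, since the same argument bounds it by $\eps_0$.

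The main obstacle is the singularity of $\nabla V$ at the point where $w$ meets the intermediate velocity $V_r$. The mean value theorem produces $|w-V_r|^{-2}$, which is integrable only because $V_r$ is a uniformly nondegenerate change of variables in $v$. So the key point to verify carefully is the uniform bi-Lipschitz property of the interpolated maps $V_r$, not just of the endpoint maps. This step relies on the smallness of $\eps$ exactly as in Proposition \ref{Vlim}.
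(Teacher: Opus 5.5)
Your proof is correct, but it takes a different route from the paper's. The paper does not integrate $\nabla V$ along a segment. Instead it sets $R=2\|V_\infty-V(t)\|_{L^\infty_{x,v}}$ and splits the integral into a near region $S$, where both $|w-V(t,0,x,u)|$ and $|w-V_\infty(0,x,u)|$ are at most $R$, and its complement. On $S$ it bounds the two Coulomb terms separately, changing variables through each endpoint map, and gets $\eps_0R^2$. On $S^c$ the two distances are comparable, and $\bigl||a|^{-1}-|b|^{-1}\bigr|=\bigl||b|-|a|\bigr|/(|a||b|)$ gives $\eps_0R\sup_x\int|w-V_\infty(0,x,u)|^{-2}\,du\les\eps_0R$.

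You replace this case analysis with one estimate. Integrating $\nabla V$ along $V_r$ and using Tonelli reduces everything to the uniform Riesz-potential bound $\iint|w-V_r|^{-2}|f_0|\,dx\,dv\les\eps_0$, which follows from Lemma \ref{potest}(i) with $\alpha=2$, $p=1$, $q=\infty$, $\theta=\tfrac13$. The cost is that you need nondegeneracy of the whole interpolated family $v\mapsto V_r(x,v)$, not just of the two endpoint maps. As you observe, this costs nothing extra: the condition $|\nabla_vV_r-I|\le C\eps$ is convex in the matrix, so it is inherited from Proposition \ref{geoflow} for $V(t,0,x,\cdot)$ and Proposition \ref{Vlim} for $\mathbb{V}_x$. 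You correctly cite Proposition \ref{geoflow} for the finite-time map; the paper only cites Proposition \ref{Vlim} there, slightly loosely. The paper's far-region estimate is essentially a discrete version of your mean value argument, and it avoids the interpolated family at the price of the case split. Both give the same $\eps\eps_0\jbrac{t}^{-1}$ rate.

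One detail to state explicitly. When the segment from $V_\infty(0,x,v)$ to $V(t,0,x,v)$ passes through $w$, your pointwise inequality holds only in $[0,\infty]$, with right-hand side $+\infty$. This is harmless: your Tonelli bound shows $\int_0^1|w-V_r(x,v)|^{-2}\,dr<\infty$ for a.e. $(x,v)$ with $f_0(x,v)\neq0$, so this happens only on a null set.
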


\begin{proof}
    We have
    \begin{gather*}
        V*m(t,v) = \iint V(v-u)f(t,x,u)dxdu = \iint V(v-V(t,0,x,u))f_0(x,u)dxdu\\
        V*m_{\infty}(v) = \iint V(v-V_\infty(0,x,u)) f_0(x,u)dxdu.
    \end{gather*}
    The second identity follows from changing variables in (\ref{minfdef}). Define
    $$I=\iint \big|V(v-V_\infty(0,x,u))-V(v-V(t,0,x,u))\big|f_0(x,u)dxdu.$$
    We will split into regions near and far from the singularities. Let $R=2\|V_\infty-V(t)\|_{L^\infty_{x,v}}$ and let $S$ be the near region:
    $$S:=\{(x,u): |v-V(t,0,x,u)|,|v-V_\infty(0,x,u)|\leq R\}.$$
    Also, let $I_1$ be the integral over $S$ and $I_2$ the integral over $S^c$. For $I_1$ we estimate directly, using the compact support of $f_0$ and making the change of variables $w=v-V_\infty(0,x,u)$ in the first term and $w=v-V(t,0,x,u)$ in the second (recalling that both transformations are uniformly bi-Lipschitz by Proposition \ref{Vlim}) to obtain
    \begin{align*}
        I_1&\les \|f_0\|_{L^\infty}\sup_x \Big(\int_{\{u:|v-V_\infty(0,x,u)|\leq R\}}\frac{du}{|v-V_\infty(0,x,u)|}\\
        &\qquad\qquad\qquad\quad+\int_{\{u:|v-V(t,0,x,u)|\leq R\}}\frac{du}{|v-V(t,0,x,u)|}\Big)\\
        &\les \eps_0 \int_{|w|\leq R} \frac{dw}{|w|} \les \eps_0 R^2.
    \end{align*}
    For $I_2$, note that on $S^c$ we either have
    $$|v-V(t,0,x,u)|\geq |v-V_{\infty}(0,x,u)|-\|V_\infty-V(t)\|_{L^\infty_{x,v}} \geq \tfrac12 |v-V_{\infty}(0,x,u)|$$
    or
    $$|v-V_\infty(0,x,u)|\geq |v-V(t,0,x,u)|-\|V_\infty-V(t)\|_{L^\infty_{x,v}} \geq \tfrac12 |v-V(t,0,x,u)|.$$
    Without loss of generality we can focus on the first case $|v-V_\infty(0,x,u)|>R$ as the proof for the other case will be the same. Therefore,
    \begin{align*}
        I_2&\leq \iint_{\{(x,u):|v-V_\infty(0,x,u)|>R\}} \frac{\big||v-V(t,0,x,u)|-|v-V_{\infty}(0,x,u)|\big|}{|v-V_{\infty}(0,x,u)||v-V(t,0,x,u)|}f_0(x,u)dxdu\\
        &\les \eps_0 R \sup_x \int_{\{u:f_0(x,u)\neq 0\}}\frac{du}{|v-V_\infty(0,x,u)|^2}.
    \end{align*}
    If we change variables $w=v-V_\infty(0,x,u)$, then since $v\mapsto V_\infty$ is bi-Lipschitz and $f_0$ has compact support, the integral above is over a compact set. But $|w|^{-2}$ is locally integrable and so we obtain $I_2\les \eps_0 R$. Finally, recalling Proposition \ref{Vlim}, we arrive at
    $$|V*m(t,v)-V*m_\infty(v)|\leq I \les \eps_0 R\les \eps\eps_0 \jbrac{t}^{-1}.$$
\end{proof}

Finally, we prove (\ref{VrhoSSasymp}).

\begin{prop}\label{asympfermpot}
    There holds 
    $$\|V*\rho(t,x)-t^{-1}(V*m_\infty)(\tfrac{x}{t})\|_{L^\infty_x}\les \eps\eps_0 t^{-2} \ln^5\jbrac{t}$$
\end{prop}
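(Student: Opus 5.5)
The plan is to pass to the profile $g(t,x,v)=f(t,x+vt,v)$ and rescale by $t$, so that $V*\rho$ becomes $t^{-1}$ times an average of the Coulomb kernel over velocities. This splits the error into a \emph{dispersive} part, coming from the spatial spread of $g$, and a \emph{scattering mass} part, which Lemma \ref{scatmass} already controls.

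\textbf{Step 1 (rescaling).} Change variables $y\mapsto y+ut$ and use the homogeneity $V(tz)=t^{-1}V(z)$. This gives
\begin{equation*}
(V*\rho)(t,x)=\iint V(x-y-tu)\,g(t,y,u)\,dy\,du=t^{-1}\iint V\big(\tfrac{x}{t}-u-\tfrac{y}{t}\big)g(t,y,u)\,dy\,du .
\end{equation*}
Since $\int g(t,y,u)\,dy=m(t,u)$, we also have $t^{-1}(V*m(t))(\tfrac{x}{t})=t^{-1}\iint V(\tfrac{x}{t}-u)g(t,y,u)\,dy\,du$. Hence
\begin{equation*}
V*\rho(t,x)-t^{-1}(V*m_\infty)(\tfrac xt)=t^{-1}\,\mathrm{I}(t,\tfrac xt)+t^{-1}\big(V*m(t)-V*m_\infty\big)(\tfrac xt),
\end{equation*}
where
\begin{equation*}
\mathrm{I}(t,w)=\iint\big[V(w-u-\tfrac yt)-V(w-u)\big]g(t,y,u)\,dy\,du .
\end{equation*}
By Lemma \ref{scatmass}, the second term is bounded by $t^{-1}\cdot\eps\eps_0\jbrac{t}^{-1}$. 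This is the source of the $\eps\eps_0$ structure in the statement.

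\textbf{Step 2 (the dispersive term $\mathrm{I}$).} I will reuse the near/far decomposition from the proof of Lemma \ref{scatmass}, with the shift $\tfrac yt$ playing the role of $V_\infty-V(t)$. By Lemma \ref{gcharbds}, on the support of $g$ we have $|\tfrac yt|\le \tfrac{k+C\eps\ln\jbrac t}{t}$; set $R\sim 2\sup|\tfrac yt|$.
\begin{itemize}
\item \emph{Near region} ($|w-u|\lesssim R$): bound each singular term separately in $u$ by $\int_{|z|\le R}|z|^{-1}dz\lesssim R^2$, and then integrate $\|g\|_{L^\infty_u}$ in $y$.
\item \emph{Far region}: use the mean value theorem $|V(a-b)-V(a)|\lesssim |b|\,|a|^{-2}$, valid for $|a|\ge 2|b|$. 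This gives
\begin{equation*}
\lesssim t^{-1}\iint |y|\,|w-u|^{-2}g\,dy\,du\lesssim t^{-1}\,\|\jbrac{y}g\|_{L^1_yL^\infty_u},
\end{equation*}
by local integrability of $|z|^{-2}$ together with the compact $u$-support of $g$.
\end{itemize}
The weighted norms are then bounded by Lemma \ref{vlasovweighted}, which gives $\|\jbrac y^q g\|_{L^p}\lesssim \eps_0\ln^{q+3/p}\jbrac t$. The powers of logarithm from the weight $|y|$, from $R$, and from the $L^1_y$ integration over $|y|\lesssim\ln\jbrac t$ combine to at most $\ln^5\jbrac t$. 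Altogether this gives $t^{-1}\mathrm{I}\lesssim t^{-2}\ln^5\jbrac t$ times a data factor.

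\textbf{Main obstacle: obtaining the factor $\eps$ in front of the dispersive term.} The naive estimate in Step 2 only gives $\eps_0$. The part of $\mathrm{I}$ that is linear in $y$ and of size $k/t$ comes from the free spreading of the initial support, and it is not small in $\eps$ by itself. To extract $\eps$, I would first try a free/nonlinear decomposition. Write $g(t,y,u)=f_0(y,u)+[g(t)-f_0]$, where the difference is supported on characteristics that deviate by $O(\eps\ln\jbrac t)$ (Lemma \ref{gcharbds}, Proposition \ref{geoflow}).
\begin{itemize}
\item The nonlinear difference can be pushed through the flow using measure preservation. It then contributes $\eps\eps_0 t^{-2}\ln^5\jbrac t$, by the same near/far argument as above with $R\sim\eps\ln\jbrac t/t$.
\item The free part is $t^{-1}\iint[V(w-u-y/t)-V(w-u)]f_0\,dy\,du$. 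My intended argument is that this contribution should be absorbed into the definition of the comparison profile, or into the implicit constant through the bootstrap relation between $\eps$ and $\eps_0$: after the bootstrap closes one has $z\lesssim\eps_0^2$, so both sides can be expressed in terms of the data.
\end{itemize}
This is the weakest point of the plan. The free part is of size $\eps_0 t^{-2}$ with no $\eps$, and none of the steps above produces the missing factor, so the plan as it stands only establishes the bound with prefactor $\eps_0$ in place of $\eps\eps_0$. Verifying that the free contribution genuinely carries the factor $\eps$ is the step I expect to be hardest. It may not be achievable without modifying the statement.
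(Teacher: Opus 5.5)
Your Steps 1 and 2 are correct and are essentially the paper's proof. The paper also uses Lemma \ref{scatmass} to reduce to comparing $V*\rho(t,x)$ with $t^{-1}(V*m(t))(\tfrac{x}{t})$, and writes both quantities through $g$ and the homogeneity of $V$, working in the unrescaled variable $z=tu$. It then splits into two regions. On $|x-z|>2|y|$, the mean value bound plus Lemma \ref{potest} applied to $m_1(t,v)=\int|y|g(t,y,v)\,dy$ gives $\eps_0t^{-2}\ln^{10/3}\jbrac{t}$. On $|x-z|\le 2|y|$, direct integration over the support from Lemma \ref{gcharbds} gives $\eps_0t^{-3}\ln^5\jbrac{t}$. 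Your fixed radius $R$, and your use of the compact $u$-support instead of the $L^1$--$L^\infty$ interpolation, are cosmetic differences. One small citation point: the mixed norm $\|\jbrac{y}g\|_{L^1_yL^\infty_u}\les\eps_0\ln^4\jbrac{t}$ follows directly from Lemma \ref{gcharbds} and $\|f_0\|_{L^\infty}\le\eps_0$, not from Lemma \ref{vlasovweighted}, which only covers $L^p_{x,v}$ norms.

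On the factor $\eps$, your suspicion is right, and the paper's proof does not supply it either. Its bounds on the dispersive term carry only $\eps_0$, and Theorem \ref{modscat} records the estimate as (\ref{VrhoSSasymp}) with prefactor $\eps_0$. In fact the $\eps\eps_0$ version cannot hold uniformly in $\eps$. If $\phi_0=0$, then $\phi\equiv0$ and $E\equiv0$, so $z\equiv0\le\eps$ for every $\eps>0$. Yet for free transport, $t^2\big[V*\rho(t,tw)-t^{-1}(V*m_\infty)(w)\big]\to-\iint\nabla V(w-u)\cdot y\,f_0(y,u)\,dy\,du$. This limit is nonzero for, e.g., $f_0=c\,\chi_{B(a,1)}(y)\chi_{B(0,1)}(u)$ with $a\neq0$, evaluated at $|w|>1$ with $a\cdot w\neq0$. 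So your diagnosis that the term linear in $y/t$ is a free-spreading effect is exactly right. You should discard the free/nonlinear decomposition and the \emph{absorption through the bootstrap} idea. The latter is invalid, because an $O(\eps_0)$ term cannot be absorbed into $\eps\eps_0$ when $\eps$ is small. With prefactor $\eps_0$, as in (\ref{VrhoSSasymp}), your Steps 1--2 are a complete proof.
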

\begin{proof}
    In view of Lemma \ref{scatmass}, it suffices to estimate $\|V*\rho(t,x) - t^{-1}(V*m)(t,\tfrac{x}{t})\|_{L^\infty_x}$. By a change of variables,
$$\rho(t,x) = \int g(t,x-vt,v)dv = t^{-3}\int g(t,y,\tfrac{x-y}{t})dy.$$
Hence,
$$(V*\rho)(t,x) = t^{-3}\iint V(x-z)g(t,y,\tfrac{z-y}{t})dydz = t^{-3}\iint V(x-z-y)g(t,y,\tfrac{z}{t})dydz,$$
and using the $-1$ homogeneity of $V$,
\begin{align*}
    t^{-1}(V*m)(t,\tfrac{x}{t}) &= t^{-1}\iint V(\tfrac{x}{t}-v)g(t,y,v)dydv = t^{-3}\iint V(x-z)g(t,y,\tfrac{z}{t})dydz.
\end{align*}

Therefore, 
    \begin{align*}
        |(V*\rho)(t,x)-t^{-1}(V*m)(t,\tfrac{x}{t})|\leq t^{-3}\iint |V(x-z-y)-V(x-z)|g(t,y,\tfrac{z}{t}) dydz .
    \end{align*}
 When $y,z$ is such that $|x-z|>2|y|$, we have $|x-z-y|\geq |x-z|-|y| > \frac12|x-z|$, so using Lemmas \ref{potest} and \ref{gcharbds} gives, with $m_1(t,v)=\int |y|g(t,y,v)dy$, 
 \begin{align*}
     \iint_{|x-z|>2|y|} &|V(x-z-y)-V(x-z)|g(t,y,\tfrac{z}{t}) dydz \\
     &\les \iint_{|x-z|>2|y|} \frac{|y|}{|x-z||x-z-y|}g(t,y,\tfrac{z}{t}) dydz \\
     &\les \big\||\cdot|^{-2}*m_1(t,\tfrac{\cdot}{t})\big\|_{L^\infty_x}\\
     &\les \big(t^3\|m_1(t)\|_{L^1}\big)^{1/3} \|m_1\|_{L^\infty}^{2/3}\\
     &\les (t^3 \eps_0\ln^{4}\jbrac{t})^{1/3}(\eps_0\ln^3\jbrac{t})^{2/3} = \eps_0 t\ln^{10/3}\jbrac{t}.
 \end{align*}
 This gives the desired bound when $|x-z|>2|y|$. On the other hand, when $|x-z|\leq 2|y|$, we just bound directly 
 \begin{align*}
     \iint_{|x-z|\leq 2|y|} &|V(x-z-y)-V(x-z)|g(t,y,\tfrac{z}{t}) dydz \\
     &\leq  \iint_{|x-z|\leq 2|y|}\left( \frac{1}{|x-z-y|}+ \frac{1}{|x-z|}\right) g(t,y,\tfrac{z}{t}) dydz\\
     &\leq \eps_0\iint_{\{|x-z|\leq 2|y|\}\cap \{y:g(t,y,\frac{z}{t})\neq 0\}} \frac{1}{|x-z-y|}+ \frac{1}{|x-z|} dzdy.
 \end{align*}
 For the first term inside the integral, we use Lemma \ref{gcharbds} and the fact that $|x-y-z|\leq 3|y|$ to obtain
 \begin{align*}
     \iint_{\{|x-z|\leq 2|y|\}\cap\{y:g(t,y,\frac{z}{t})\neq 0\}} \frac{1}{|x-z-y|} dzdy &\leq \int_{|y|\les\ln\jbrac{t}}\int_{|x-z-y|\les \ln\jbrac{t}} \frac{1}{|x-z-y|} dzdy \\
     &\les \ln^3\jbrac{t} \int_0^{C\ln\jbrac{t}}rdr \les \ln^5\jbrac{t}.
 \end{align*}
 The estimate of the second term inside the integral is nearly identical, but easier. This gives the desired bounds.
\end{proof}

\section{Analysis of the bosonic part}\label{boson}

\subsection{Hartree energy estimates}

Recall that the vector field $J := x+it\nabla_x$ commutes with $i\p_t + \frac12 \Delta_x$, and moreover satisfies, with $M(t)$ the multiplication operator by $e^{it|x|^2/2t}$, 
$$J = M(t) (it\nabla) M(-t), \quad J = e^{-it\Delta/2}xe^{it\Delta/2}.$$
The first identity leads to the definition
$$|J|^s = t^s M(t)|\nabla|^s M(-t).$$
with $|\nabla|^s$ the Fourier multiplier operator with symbol $|\xi|^s$.

\begin{prop}\label{EEhartree}
There holds
    \begin{align*}
        \|\phi(t)\|_{H^s} &\les \eps_0 (1+ \eps), \\
        \big\||J|^{s}\phi(t)\big\|_{L^2} &\les \eps_0 (1+ \eps)\ln^2\jbrac{t}.
    \end{align*}
\end{prop}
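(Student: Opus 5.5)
Both bounds come from energy estimates on the Duhamel formulation. The only input on the fermion side is the decay of $V*\rho$ from Lemma \ref{potdecayest}, which is available under the bootstrap hypothesis.

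\emph{The $H^s$ bound.} Mass conservation gives $\|\phi(t)\|_{L^2}=\|\phi_0\|_{L^2}$, so only $\||\nabla|^s\phi\|_{L^2}$ needs attention. Apply $|\nabla|^s$ to the equation and take the $L^2$ inner product with $|\nabla|^s\phi$. Since $V*\rho$ is real, the commutator structure gives
$$\tfrac{d}{dt}\||\nabla|^s\phi\|_{L^2}\les \big\||\nabla|^s\big((V*\rho)\phi\big)-(V*\rho)|\nabla|^s\phi\big\|_{L^2}.$$
A Kato--Ponce commutator estimate bounds this by
$$\|\nabla (V*\rho)\|_{L^\infty}\||\nabla|^{s-1}\phi\|_{L^2}+\||\nabla|^s V*\rho\|_{L^\infty}\|\phi\|_{L^2}.$$
Since $s<2$, Lemma \ref{potdecayest} applies with $r=1$ and $r=s$ and gives decay $\eps_0\jbrac{t}^{-2}$ and $\eps_0\jbrac{t}^{-1-s}$ respectively. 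Both rates are integrable, so Gronwall yields $\|\phi(t)\|_{H^s}\les\eps_0$. This is at least as good as the claimed $\eps_0(1+\eps)$; the factor $(1+\eps)$ can absorb constants coming from the smallness of the data.

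\emph{The weighted bound.} The approach is the same, using $|J|^s=t^sM(t)|\nabla|^sM(-t)$. Because $J$ commutes with the linear Schr\"odinger operator, $|J|^s$ does as well. Since $M(t)$ is multiplication by a unimodular function that commutes with multiplication by $V*\rho$, one obtains
$$\tfrac{d}{dt}\||J|^s\phi\|_{L^2}\le t^s\big\|[|\nabla|^s,(V*\rho)]\,M(-t)\phi\big\|_{L^2}.$$
The same commutator estimate bounds the right side by
$$t^s\|\nabla V*\rho\|_{L^\infty}\||\nabla|^{s-1}M(-t)\phi\|_{L^2}+t^s\||\nabla|^sV*\rho\|_{L^\infty}\|\phi\|_{L^2}.$$
The second term is $\les\eps_0^2 t^{s}t^{-1-s}=\eps_0^2t^{-1}$. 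For the first term, write $t^{s-1}\||\nabla|^{s-1}M(-t)\phi\|_{L^2}=\||J|^{s-1}\phi\|_{L^2}$ and interpolate between $L^2$ and $|J|^s$:
$$\||J|^{s-1}\phi\|_{L^2}\les\|\phi\|_{L^2}^{1/s}\||J|^s\phi\|_{L^2}^{1-1/s}.$$
The first term is then $\les\eps_0 t^{-1}\cdot\||J|^{s-1}\phi\|_{L^2}$.

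These bounds give a differential inequality of the form $\tfrac{d}{dt}Y\les\eps_0 t^{-1}(\eps_0+Y)$ for $Y=\||J|^s\phi\|_{L^2}$. Gronwall by itself produces $t^{C\eps_0}$ growth, which is exactly what the earlier remark says the paper avoids, so this crude version is not enough.

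\emph{The main obstacle} is getting $\ln^2$ growth instead of the polynomial $t^{C\eps_0}$. The way to do it is to expose the leading part of $V*\rho$ before commuting. By Proposition \ref{asympfermpot}, $V*\rho=t^{-1}(V*m)(t,x/t)+O(\eps_0t^{-2}\ln^5\jbrac{t})$, and the remainder contributes integrably. For the main part $t^{-1}W(x/t)$ with $W=V*m$, the commutator of $|J|^s=e^{-it\Delta/2}|x|^se^{it\Delta/2}$ with $W(x/t)$ is of size roughly $t^{-1}\|\nabla W\|_{L^\infty}\||J|^{s-1}\phi\|_{L^2}$.

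An alternative that may be cleaner is to pass to the profile: $\||J|^s\phi\|_{L^2}=\||x|^s\psi\|_{L^2}$, and conjugating the nonlinearity shows that the derivative of $|\xi|^s$ falling on the phase factor is bounded, at leading order, by the $L^2$ norm of $\phi$. In either formulation the inequality should close as $\tfrac{d}{dt}Y\les\eps_0 t^{-1}\ln\jbrac{t}$, giving $Y\les\eps_0(1+\eps)\ln^2\jbrac{t}$. The single $\ln$ in the integrand would come from the time-derivative error $\|\nabla E\|\sim t^{-3}\ln\jbrac{t}$ in the bootstrap quantity, or equivalently from the support growth in Lemma \ref{gcharbds}.

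The step I expect to be hardest is carrying out this refined commutator bound. It requires moments of $g$ from Lemma \ref{vlasovweighted} to control $t^s\||\nabla|^s(V*\rho-t^{-1}W(\cdot/t))\|_{L^\infty}$. If the bound is only available for fractional $s$, I would fall back to working with the $s=1$ and $s=2$ endpoints and interpolating.
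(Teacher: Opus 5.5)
Your $H^s$ argument is correct and follows the paper up to packaging. The paper writes $|\nabla|^s=R\cdot|\nabla|^\sigma\nabla$ and uses a commutator estimate of order $\sigma=s-1<1$ instead of a Kato--Ponce estimate of order $s$, but the resulting terms are of the same type and equally integrable.

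\textbf{The gap is in the weighted bound, which you do not actually prove.} Your ``refined'' route does not address the real issue:
\begin{itemize}
\item Commuting $|J|^s$ with the self-similar part $t^{-1}W(x/t)$ reproduces exactly the term $t^{-1}\|\nabla W\|_{L^\infty}\||J|^{s-1}\phi\|_{L^2}$ you already had.
\item The remainder would need $L^\infty$ bounds on $\nabla$ and $|\nabla|^s$ of $V*\rho-t^{-1}W(\cdot/t)$. Proposition \ref{asympfermpot} only controls this difference in $L^\infty$, with no derivatives.
\item The closing inequality $\frac{d}{dt}Y\les\eps_0t^{-1}\ln\jbrac{t}$ is asserted without its one essential input: a bound $\||J|^{s-1}\phi\|_{L^2}\les\eps_0\ln\jbrac{t}$ that does not refer back to $Y$.
\end{itemize}
Your explanation of the logarithm is also off. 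Neither $\nabla E$ nor the support growth of Lemma \ref{gcharbds} enters the Hartree energy estimate, which sees the fermions only through Lemma \ref{potdecayest}.

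\textbf{The missing step is a separate first-order energy estimate for $J\phi$.} Since $J\big((V*\rho)\phi\big)=(V*\rho)J\phi+it(\nabla V*\rho)\phi$ and $V*\rho$ is real, the first term drops out of $\Imag\langle J((V*\rho)\phi),J\phi\rangle$. This leaves $\frac{d}{dt}\|J\phi\|_{L^2}\le t\|\nabla V*\rho\|_{L^\infty}\|\phi\|_{L^2}\les\eps_0^2\jbrac{t}^{-1}$. That is pure forcing with no Gronwall amplification, so $\|J\phi(t)\|_{L^2}\les\eps_0+\eps_0^2\ln\jbrac{t}$.

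Because $0<s-1<1$, interpolation then gives $\||J|^{s-1}\phi\|_{L^2}\le\|\phi\|_{L^2}^{2-s}\|J\phi\|_{L^2}^{s-1}\les\eps_0\ln\jbrac{t}$. Inserting this into your own commutator inequality yields $\frac{d}{dt}\||J|^s\phi\|_{L^2}\les\eps_0^2\jbrac{t}^{-1}\ln\jbrac{t}$, which integrates to the claimed $\ln^2\jbrac{t}$. This is precisely the paper's argument, and the single logarithm in the integrand is the growth of $\|J\phi\|_{L^2}$.

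Incidentally, your $t^{C\eps_0}$ growth was an artifact of linearizing $Y^{1-1/s}\les \eps_0^{-1/s}(\eps_0+Y)$. Kept in sublinear form, your interpolation gives $\frac{d}{dt}Y\les\eps_0^{1+1/s}\jbrac{t}^{-1}Y^{1-1/s}+\eps_0^2\jbrac{t}^{-1}$. Hence $Y^{1/s}\les\eps_0^{1/s}(1+\eps_0\ln\jbrac{t})$, i.e.\ $Y\les\eps_0(1+\eps_0\ln\jbrac{t})^s$, which would also suffice since $s<2$.
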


\begin{proof}

Let $\Lambda\in \{\text{Id},J,|\nabla|^s,|J|^s\}$. Since these all commute with the linear Schr\"odinger operator, we have
\begin{align}\label{schenergyid}
    \|\Lambda\phi(t)\|_{L^2}^2 = \|\Lambda\phi(0)\|_{L^2}^2 + 2\int_0^t \Imag \jbrac{ \Lambda [(V*\rho)\phi](\tau), \Lambda \phi(\tau) }d\tau.
\end{align}

When $\Lambda = \text{Id}$, we get conservation of mass:
$$\|\phi(t)\|_{L^2}^2 = \|\phi_0\|_{L^2}^2 + 2\Imag \int_0^t (V*\rho(\tau))|\phi(\tau)|^2 d\tau = \|\phi_0\|_{L^2}^2 \leq \eps_0^2.$$
When $\Lambda = \nabla$, we have
\begin{align*}
    \|\nabla\phi(t)\|_{L^2}^2 &= \|\nabla\phi_0\|_{L^2}^2 + 2\int_0^t \Imag \jbrac{ (\nabla V*\rho)\phi + (V*\rho)\nabla\phi, \nabla \phi }d\tau\\
    &\leq \|\nabla\phi_0\|_{L^2}^2 + C\int_0^t\|\nabla V*\rho(\tau)\|_{L^\infty}\|\phi(\tau)\|_{L^2}\|\nabla\phi(\tau)\|_{L^2}d\tau.
\end{align*}
Therefore, Gronwall, conservation of mass, and Lemma \ref{potdecayest} yield
$$\|\nabla\phi(t)\|_{L^2}\leq \|\nabla\phi_0\|_{L^2} + C\int_0^t \eps\eps_0\jbrac{\tau}^{-2}d\tau \les \eps_0 + \eps\eps_0.$$

Similarly, when $\Lambda = J$, 
\begin{align*}
    \|J\phi(t)\|_{L^2}^2 &= \|x\phi_0\|_{L^2}^2 + 2\int_0^t \Imag \jbrac{ (V*\rho)J\phi + i\tau(\nabla V*\rho)\phi, J \phi }d\tau\\
    &\leq \|x\phi_0\|_{L^2}^2 + C\int_0^t \tau\|\nabla V*\rho(\tau)\|_{L^\infty}\|\phi(\tau)\|_{L^2}\|J\phi(\tau)\|_{L^2}d\tau.
\end{align*}
So Gronwall and Lemma \ref{potdecayest} yields
$$\|J\phi(t)\|_{L^2}\leq \|x\phi_0\|_{L^2} + C\int_0^t \eps\eps_0\jbrac{\tau}^{-1}d\tau \les \eps_0 + \eps\eps_0\ln\jbrac{t}.$$

When $\Lambda = |\nabla|^s, |J|^s$, we need to use commutator estimates. Let $s=1+\sigma$ where $\sigma\in (1/2,1)$, and write $|\nabla|^s = R\cdot|\nabla|^\sigma\nabla$, where $R$ is the vector-valued Riesz transform. Then
\begin{align}\label{commid}
    |\nabla|^s (uv) &= R\cdot |\nabla|^\sigma(u\nabla v + v\nabla u)\notag \\
    &= R\cdot \big( u|\nabla|^\sigma\nabla v + v|\nabla|^\sigma \nabla u + C^\sigma(u,\nabla v) + C^\sigma(v,\nabla u)\big),
\end{align}
where
$$C^\sigma(w_1,w_2) = |\nabla|^\sigma (w_1w_2) - w_1|\nabla|^\sigma w_2.$$

Recall the following commutator estimate from Theorem 5.1 of \cite{li_kato-ponce_2019}:
 \begin{equation}\label{commest}
     \big\|C^\sigma(w_1,w_2)\big\|_{L^p} \les \big\||\nabla|^{\sigma}w_1\big\|_{L^q}\|w_2\|_{L^r},
 \end{equation}
 for all $1<p<\infty, 1<q,r\leq \infty$ and $\sigma>0$ satisfying $1/p = 1/q + 1/r$.\\

We use (\ref{commid}) with $u=V*\rho$ and $v=\phi$. When all derivatives fall on $\phi$, the fact that $R$ is $L^2$-self adjoint with $R\cdot R = \text{Id}$ and the identity $|\nabla|^s = R\cdot|\nabla|^\sigma\nabla$ gives
\begin{align*}
    \Imag \jbrac{ R\cdot((V*\rho)|\nabla|^\sigma\nabla\phi), |\nabla|^s\phi } = \Imag\jbrac{ (V*\rho)|\nabla|^\sigma\nabla\phi, |\nabla|^\sigma\nabla\phi } = 0.
\end{align*}
Therefore, the top order term in the energy estimate vanishes, which is responsible for logarithmic rather than slow algebraic growth of the norms. Using (\ref{commest}), Lemma \ref{potdecayest}, the bootstrap assumptions, and interpolation, 
\begin{align*}
    \|\phi(t)\|_{\dot{H}^s}^2 &\les \eps_0^2 + \int_0^t \Big(\big\||\nabla|^\sigma \nabla V*\rho\big\|_{L^\infty} \|\phi\|_{L^2} + \big\||\nabla|^\sigma V*\rho\big\|_{L^\infty} \|\nabla \phi\|_{L^2} +  \\
    & \qquad\qquad\qquad\quad +\|\nabla V*\rho\|_{L^\infty} \big\||\nabla|^\sigma\phi\|_{L^2}\Big) \|\phi\|_{\dot{H}^s} d\tau\\
    &\les \eps_0^2 + \int_0^t \eps\eps_0 \big( \jbrac{\tau}^{-1-s} + \jbrac{\tau}^{-1-\sigma} + \jbrac{\tau}^{-2} \big)\|\phi\|_{\dot{H}^s}d\tau.
\end{align*}
Therefore, Gronwall gives 
$$\|\phi(t)\|_{\dot{H}^s} \les \eps_0 + \eps\eps_0.$$

Estimating $|J|^s$ is very similar. Using (\ref{schenergyid}), we have, with $w = M(-t)\phi$,
\begin{align*}
    \big\||J|^s\phi(t)\big\|_{L^2}^2 = \big\||x|^s\phi_0\big\|_{L^2}^2+2\int_0^t t^{2s}\Imag\jbrac{ |\nabla|^s[(V*\rho)w](\tau),|\nabla|^s w(\tau)} d\tau.
\end{align*}
Proceeding in the same manner as the estimate of $\phi$ in $H^s$, namely, using (\ref{commid}), (\ref{commest}), and Lemma \ref{potdecayest}, we get 
\begin{align*}
    \big\||J|^s\phi(t)\big\|_{L^2}^2 &\les \eps_0^2 + \int_0^t \tau^{2s}\Big(\big\||\nabla|^\sigma \nabla V*\rho\big\|_{L^\infty} \|w\|_{L^2} + \big\||\nabla|^\sigma V*\rho\big\|_{L^\infty} \|\nabla w\|_{L^2} +  \\
    & \qquad\qquad\qquad\quad +\|\nabla V*\rho\|_{L^\infty} \big\||\nabla|^\sigma w\|_{L^2}\Big) \big\||\nabla|^s w\big\|_{L^2} d\tau\\
    &= \eps_0^2 + \int_0^t \tau^{s}\Big(\big\||\nabla|^\sigma \nabla V*\rho\big\|_{L^\infty} \|\phi\|_{L^2} + \big\||\nabla|^\sigma V*\rho\big\|_{L^\infty} \tau^{-1}\|J\phi\|_{L^2} +  \\
    & \qquad\qquad\qquad\quad +\|\nabla V*\rho\|_{L^\infty} \tau^{-\sigma}\big\||J|^\sigma \phi\|_{L^2}\Big) \big\||J|^s \phi\big\|_{L^2} d\tau\\
    &\les \eps_0^2 + \int_0^t \eps\eps_0\tau^{s}\big( \jbrac{\tau}^{-1-s} + \jbrac{\tau}^{-2-\sigma}
    \ln\jbrac{\tau} + \jbrac{\tau}^{-2-\sigma}\ln\jbrac{\tau}\big)\big\||J|^s\phi\big\|_{L^2}d\tau\\
    &\les \eps_0^2 + \int_0^t \eps\eps_0 \jbrac{\tau}^{-1}\ln\jbrac{\tau}\big\||J|^s\phi\|_{L^2}d\tau.
\end{align*}
In the third line we used interpolation to obtain $\||J|^\sigma \phi\|_{L^2}\les \|\phi\|_{L^2}+\|J\phi\|_{L^2} \les \eps\ln\jbrac{t}$. As a result, Gronwall yields

$$\big\||J|^s\phi(t)\big\|_{L^2} \les \eps_0+\eps\eps_0\ln^{2}\jbrac{t},$$
as desired. 
\end{proof}

\subsection{Decay estimates for the bosons}\label{hartreedecay}

First, recall the Fraunhofer formula, which gives a refined dispersive estimate for the Schr\"odinger semigroup and will enable us to prove the asymptotic expansions for the fields.

\begin{lem}\label{fraunhofer}
    There holds
    $$(e^{it\Delta/2}h)(t,x) = \frac{e^{i|x|^2/2t}}{(it)^{3/2}}  \widehat{h}(t,x/t) + R(t,x),$$
    where the remainder satisfies
    \begin{align*}
        \|R(t)\|_{L^\infty_x} &\les t^{-3/2-\delta}\|\jbrac{x}^s h(t,x)\|_{L^2_x},\\
        \|R(t)\|_{L^2_x} &\les t^{-s/2}\|\jbrac{x}^s h(t,x)\|_{L^2_x},
    \end{align*}
    
    for any $\delta<\frac12 s_{gap}$. As a consequence, taking $h$ to be the profile $\psi=e^{-it\Delta/2}\phi$,
    
    $$\|\phi(t)\|_{L^\infty} \les t^{-3/2}\|\hat{\psi}(t,\xi)\|_{L^\infty_\xi} + t^{-3/2 - \delta}\|\jbrac{x}^s\psi(t,x)\|_{L^2_x}.$$
\end{lem}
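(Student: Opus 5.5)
The plan is to factor the Schrödinger group with the standard $M$–$D$–$\mathcal{F}$ decomposition and estimate the deviation of one modulation factor from the identity. Write $M(t)$ for multiplication by $e^{i|x|^2/2t}$ and $(D(t)u)(x) = (it)^{-3/2}u(x/t)$. Then
$$e^{it\Delta/2} = M(t)D(t)\mathcal{F}M(t),$$
up to the normalization of $\mathcal{F}$; I will absorb $(2\pi)^{-3/2}$ factors into constants or into $D$. Hence
$$e^{it\Delta/2}h = M(t)D(t)\hat h + M(t)D(t)\mathcal{F}\big[(M(t)-1)h\big],$$
and the first term is exactly the main term $\frac{e^{i|x|^2/2t}}{(it)^{3/2}}\hat h(x/t)$. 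The remainder is therefore
$$R = M(t)D(t)\mathcal{F}\big[(e^{i|y|^2/2t}-1)h\big].$$

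For the $L^2$ bound, $M$ and $D$ are unitary on $L^2$ (after normalization), and so is $\mathcal{F}$ up to constants. Thus $\|R\|_{L^2}\les \|(e^{i|y|^2/2t}-1)h\|_{L^2}$. Using $|e^{i\theta}-1|\les |\theta|^{a}$ for $0\le a\le 1$ with $\theta=|y|^2/2t$ and $a=s/2<1$ gives
$$|e^{i|y|^2/2t}-1|\les t^{-s/2}|y|^{s},$$
and hence $\|R\|_{L^2}\les t^{-s/2}\|\jbrac{x}^s h\|_{L^2}$.

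For the $L^\infty$ bound, $D(t)$ supplies the factor $t^{-3/2}$, so it suffices to show
$$\|\mathcal{F}[(e^{i|y|^2/2t}-1)h]\|_{L^\infty}\les t^{-\delta}\|\jbrac{x}^sh\|_{L^2}.$$
I would bound the $L^\infty$ norm of a Fourier transform by the $L^1$ norm, and then use Cauchy–Schwarz with weight:
$$\|(e^{i|y|^2/2t}-1)h\|_{L^1}\le \big\|\jbrac{y}^{-s}|e^{i|y|^2/2t}-1|\big\|_{L^2}\,\|\jbrac{y}^sh\|_{L^2}.$$
Inserting $|e^{i|y|^2/2t}-1|\les t^{-a}|y|^{2a}$ makes the first factor finite precisely when $2(s-2a)>3$, i.e. $a<\frac12 s_{gap}$. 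It then equals $C t^{-a}$, so taking $a=\delta<\frac12 s_{gap}$ gives the claim. This integrability threshold is the only delicate point: it is exactly where the restriction $\delta<\frac12 s_{gap}$ enters, and it explains why $s>3/2$ is needed.

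Finally, for the consequence, apply the formula with $h=\psi(t)$, noting $e^{it\Delta/2}\psi(t)=\phi(t)$. Take the $L^\infty$ norm of the main term, which gives $t^{-3/2}\|\hat\psi(t)\|_{L^\infty_\xi}$, and add the remainder bound. No step here seems to be a real obstacle; the main care needed is tracking the Fourier normalization constants.
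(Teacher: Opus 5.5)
Your proposal is correct and is essentially the paper's argument: the factorization $e^{it\Delta/2}=M(t)D(t)\mathcal{F}M(t)$ is an operator-level restatement of the paper's expansion of the phase $|x-y|^2=|x|^2-2x\cdot y+|y|^2$. Both arguments isolate $R=M(t)D(t)\mathcal{F}\big[(e^{i|y|^2/2t}-1)h\big]$. Both then bound the $L^\infty$ norm via Hausdorff--Young and weighted Cauchy--Schwarz, which gives the threshold $s-2\delta>3/2$, and the $L^2$ norm via Plancherel and $|e^{i\theta}-1|\les|\theta|^{s/2}$.
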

\begin{proof}
    The proof is well-known, but the techniques serve as an important warm-up to the ideas that follow, so we include it. We expand the phase and extract the leading order term by replacing the slowly oscillating factor $e^{i|y|^2/2t}$ with $1$:
    \begin{align*}
        e^{it\Delta/2}h(t,x) &= (it)^{-3/2}\int e^{i|x-y|^2/2t}h(t,y)dy\\
        &= (it)^{-3/2}e^{i|x|^2/2t}\left(\int  e^{-iy\cdot x/t} h(t,y)dy + \int e^{-iy\cdot x/t}\big(e^{i|y|^2/2t}-1\big) h(t,y)dy \right)\\
        &= (it)^{-3/2}e^{i|x|^2/2t} \widehat{h}(t,x/t) + R(t,x).
    \end{align*}
    We observe that
    $$R(t,x) = (it)^{-3/2}e^{i|x|^2/2t}\mathcal{F}_y\big[(e^{i|y|^2/2t}-1)h(t,y) \big](x/t).$$
    Therefore, using Hausdorff-Young and the inequality $|e^{ix}-1| \leq 2|x|^{\delta}$,
    \begin{align*}
        \|R(t)\|_{L^\infty_x} \leq t^{-3/2}\|(e^{i|y|^2/2t}-1)h(t,y)\|_{L^1_y} \les t^{-3/2-\delta} \||y|^{2\delta} h(t,y)\|_{L^1_y}.
    \end{align*}
    Finally, by using Cauchy-Schwartz, we obtain the desired bound since $s-2\delta>3/2$, which is exactly the condition $\delta<\frac12\Big(s-\frac32\Big)=\frac12 s_{gap}$:
    \begin{align*}
        \||y|^{2\delta} h(t,y)\|_{L^1_y} \les \|\jbrac{y}^{-s+2\delta}\|_{L^2_y}\|\jbrac{y}^s h(t,y)\|_{L^2} \les \|\jbrac{y}^s h(t,y)\|_{L^2}.
    \end{align*}
    The $L^2$ estimate follows similarly by using Plancherel: 
    \begin{align*}
        \|R(t)\|_{L^2_x} = t^{-3/2}\|(e^{i|x/t|^2/2t}-1)h(t,x/t)\|_{L^2_x} = \|(e^{i|y|^2/2t}-1)h(t,y)\|_{L^2_y} \les t^{-s/2}\|\jbrac{y}^s h(t,y)\|_{L^2}.
    \end{align*}
\end{proof}

Motivated by the previous lemma, in the next section we will prove the following proposition, which will complete the bootstrap argument.

\begin{prop}\label{profilebd}
    We have 
    $$\|\hat\psi(t)\|_{L^\infty_\xi}\les \eps_0(1 + \eps).$$
\end{prop}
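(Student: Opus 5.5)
The plan follows the Kato--Pusateri strategy described in the introduction. The first step is to derive the evolution equation for $\hat\psi$. From Duhamel, $\p_t\psi = -ie^{-it\Delta/2}[(V*\rho)\phi]$, so
$$\p_t\hat\psi(t,\xi) = -i\,\mathcal{F}\big[e^{-it\Delta/2}((V*\rho)e^{it\Delta/2}\psi)\big](\xi) = -i(2\pi)^{-3}\int e^{it(|\xi|^2-|\eta|^2)/2}\,\widehat{V*\rho}(t,\xi-\eta)\hat\psi(t,\eta)\,d\eta .$$
Next I write $\rho$ through the Vlasov profile, $\widehat{\rho}(t,k)=\iint e^{-ik\cdot(x+vt)}g(t,x,v)\,dx\,dv$, and $\widehat V(k)=|k|^{-2}$. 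With the substitution $\eta=\xi-k$ the phase becomes $t(k\cdot\xi-|k|^2/2)-tk\cdot v - k\cdot x$. In the $k$ integral the main contribution comes from the scaling $k\sim t^{-1}$. Replacing $e^{-ik\cdot x}$, $e^{-it|k|^2/2}$ and $\hat\psi(\xi-k)$ by $1$, $1$ and $\hat\psi(\xi)$ respectively leaves
$$\hat\psi(\xi)\iint\!\!\int e^{itk\cdot(\xi-v)}|k|^{-2}\,dk\; g\,dx\,dv=t^{-1}(V*m)(t,\xi)\hat\psi(t,\xi),$$
which is exactly the leading term in (\ref{hartreeleading}).

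The remainder splits into three pieces, one for each replacement. Each is estimated with $|e^{ia}-1|\les|a|^{\theta}$ for a small $\theta$:
\begin{itemize}
\item \emph{The factor $e^{-ik\cdot x}-1$} gains $|k|^\theta|x|^\theta$. The factor $|x|^\theta$ is absorbed by the weighted bounds of Lemma \ref{vlasovweighted}, which cost only $\ln^{C}\jbrac{t}$. The factor $|k|^\theta$ gives an extra $t^{-\theta}$ after rescaling $k\to k/t$. I would make this rigorous by writing the term back in physical space as $\nabla$ of a Riesz-type potential of a weighted $\rho$-like density, and then applying Lemma \ref{potest}(i).
\item \emph{The factor $e^{-it|k|^2/2}-1$} gains $(t|k|^2)^{\theta}\sim t^{-\theta}$, and is handled the same way.
\item \emph{The difference $\hat\psi(\xi-k)-\hat\psi(\xi)$} uses the H\"older continuity of $\hat\psi$. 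Since $\hat\psi\in C^{\alpha}$ for $\alpha<s-\frac32$ by Morrey/Cauchy--Schwarz, we have $\|\hat\psi\|_{C^{\alpha}}\les\|\jbrac{x}^s\psi\|_{L^2}\les\eps_0\ln^2\jbrac{t}$ by Proposition \ref{EEhartree}. This gains $|k|^{\alpha}\sim t^{-\alpha}$.
\end{itemize}
In each case the $k$-integral $\int|k|^{-2+\theta}\,|\cdot|\,dk$ must be treated as a convolution in $v$ with a kernel homogeneous of degree $-1-\theta$ (up to the scaling in $t$), evaluated against $m$ or its moments. Here I use that $m$ and its $x$-moments are in $L^1\cap L^\infty$ with log losses (Lemmas \ref{gcharbds}, \ref{vlasovweighted}), together with Lemma \ref{potest}(i) with $\alpha=1+\theta<3$. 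Collecting terms gives
$$|\p_t\hat\psi(t,\xi)+it^{-1}(V*m)(t,\xi)\hat\psi(t,\xi)|\les\eps_0^2(1+\eps)\,t^{-1-\delta'}\ln^{C}\jbrac{t}$$
for $t\ge1$, for some $\delta'>0$ depending on $s_{gap}$. For $t\le1$ I would just use the crude bound $|\p_t\hat\psi|\les\|(V*\rho)\phi\|_{L^1}\les\eps_0^2$.

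Then I set $P(t,\xi)=\int_1^t\tau^{-1}(V*m)(\tau,\xi)\,d\tau$, which is real, and $\hat w=e^{iP}\hat\psi$. These satisfy $|\p_t\hat w|\les\eps_0^2 t^{-1-\delta'}\ln^C\jbrac t$, which is integrable. Integrating in time gives $\|\hat\psi(t)\|_{L^\infty}=\|\hat w(t)\|_{L^\infty}\le\|\hat\psi(1)\|_{L^\infty}+C\eps_0^2\les\eps_0(1+\eps)$, and $\|\hat\psi(1)\|_{L^\infty}\les\|\psi(1)\|_{L^1}\les\|\jbrac{x}^s\psi(1)\|_{L^2}\les\eps_0$.

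The main obstacle is the remainder estimate, especially the first piece. There $g$ has no regularity, so every gain must come from $x$-moments of $g$ and from the smoothing of the Coulomb kernel. Care is needed so that the kernel $|k|^{-2+\theta}$ stays locally integrable and its large-$k$ part decays enough, i.e.\ that the physical-space kernel has homogeneity in $(0,3)$. I would first try to handle this term by splitting $|k|\lessgtr t^{-1+\theta'}$ and estimating the high-frequency part crudely using the oscillation $e^{itk\cdot(\xi-v)}$ integrated against $m\in L^\infty$.
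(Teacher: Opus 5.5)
Your proposal is correct in outline and has the same skeleton as the paper: the same leading term $t^{-1}(V*m)(t,\xi)\hat\psi(t,\xi)$, the same real phase $P$, and the same integration of $\p_t\hat w$ from $t=1$. The difference is entirely in how you organize the remainder.

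\textbf{The paper's route.} The paper applies Plancherel in $(v,\eta)$ and computes $\mathcal{F}_{v,\eta}[e^{is(v\cdot\eta-|\eta|^2)}|\eta|^{-2}]=(2\pi)^3s^{-1}e^{i(|v|^2-\eta\cdot v)/s}|v|^{-2}$ exactly. As a result the whole remainder carries the single factor $e^{iv\cdot z/t}-1$. Written in physical space with the velocity-Fourier transform $\tilde g$, one use of $|e^{ia}-1|\les|a|^\delta$ splits it into $\mathcal{R}_1,\mathcal{R}_2,\mathcal{R}_3$. These are controlled by (\ref{lowmomentest}) and (\ref{vxweightedest}) and correspond roughly to your three error sources.

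\textbf{Your route.} You telescope three replacements on the $\rho$-frequency side, which is more elementary.
\begin{itemize}
\item If you order the telescoping so that $e^{-ik\cdot x}-1$ appears with the other two factors already replaced, then piece (a) equals, up to the factor $-i$, $\hat\psi(t,\xi)\bigl[(V*\rho)(t,t\xi)-t^{-1}(V*m)(t,\xi)\bigr]$. This is exactly the quantity bounded by $\eps_0t^{-2}\ln^5\jbrac{t}$ in the proof of Proposition~\ref{asympfermpot}.
\item Pieces (b) and (c) then involve only $\hat\rho(t,k)$, $\|\hat\psi\|_{L^\infty}\le\|\psi\|_{L^1}$, and $[\hat\psi]_{C^\alpha}\les\||x|^\alpha\psi\|_{L^1}$ with $\alpha<s_{gap}$.
\item You never need $\tilde g$.
\end{itemize}
What the paper's single representation (\ref{Rrep}) buys is that it also delivers the $L^2_\xi$ and $C^\delta_\xi$ bounds of Lemma~\ref{remest}. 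Those are not needed for this proposition, but they are used later to get $\phi_\infty\in C^\delta$ and hence $E_\infty\in C^{1,\delta}$.

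\textbf{A point to fix when you write it out.} Once you bound $|e^{-it|k|^2/2}-1|\les(t|k|^2)^\theta$ or $|\hat\psi(\xi-k)-\hat\psi(\xi)|\le[\hat\psi]_{C^\alpha}|k|^\alpha$, you have taken absolute values in $k$. You can then no longer turn $|k|^{-2+\theta}$ into a homogeneous physical-space kernel and apply Lemma~\ref{potest}(i); that mechanism is only available when the oscillatory factor is kept exact, as in piece (a). Likewise, oscillation against $m\in L^\infty$ gives no pointwise decay of $\hat m$, because $m$ has no regularity.

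What works is Plancherel. Split at $|k|=t^{-1}$:
\begin{itemize}
\item below the cutoff, use $|\hat\rho(t,k)|\le\|\rho(t)\|_{L^1}$;
\item above it, use Cauchy--Schwarz with $\|\hat\rho(t)\|_{L^2}\les(\|\rho\|_{L^1}\|\rho\|_{L^\infty})^{1/2}\les\eps_0t^{-3/2}$, from mass conservation and Lemma~\ref{rhodecay}.
\end{itemize}
This gives $\int|k|^{-2+\alpha}|\hat\rho(t,k)|\,dk\les\eps_0t^{-1-\alpha}$ for $\alpha<\frac12$, which handles (c). It also gives $t^\theta\int|k|^{-2+2\theta}|\hat\rho(t,k)|\,dk\les\eps_0t^{-1-\theta}$ for $\theta<\frac14$, which handles (b). Your splitting at $t^{-1+\theta'}$ for (a) works the same way. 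This Plancherel step is the counterpart of the paper's Plancherel in $v$ inside (\ref{vxweightedest}).
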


Granting this proposition for now, let us complete the proof of Proposition \ref{bootstrap} and thereby close the bootstrap argument for the decay estimates. First, (\ref{phidecaybs}) holds:

\begin{lem}\label{schrdecay}
    There holds 
    $$\|\phi(t)\|_{L^\infty}\les \eps_0(1 + \eps)\jbrac{t}^{-3/2}.$$
\end{lem}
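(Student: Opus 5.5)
The plan is to split into large and small times and, for large times, to combine the Fraunhofer formula (Lemma \ref{fraunhofer}) with Proposition \ref{profilebd} and the weighted energy estimate of Proposition \ref{EEhartree}.

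\textbf{Small times.} For $t\leq 1$ I will use Sobolev embedding $H^s\hookrightarrow L^\infty$, valid since $s>3/2$. Together with the bound $\|\phi(t)\|_{H^s}\les\eps_0(1+\eps)$ from Proposition \ref{EEhartree}, this gives $\|\phi(t)\|_{L^\infty}\les\eps_0(1+\eps)$. That is acceptable because $\jbrac{t}\sim 1$ in this range.

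\textbf{Large times.} For $t\geq 1$ I will apply the consequence of Lemma \ref{fraunhofer} with $h=\psi$:
$$\|\phi(t)\|_{L^\infty}\les t^{-3/2}\|\hat\psi(t)\|_{L^\infty_\xi}+t^{-3/2-\delta}\|\jbrac{x}^s\psi(t)\|_{L^2_x},$$
for some fixed $0<\delta<\frac12 s_{gap}$. Proposition \ref{profilebd} bounds the first term by $\eps_0(1+\eps)t^{-3/2}$.

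For the second term, I will bound $\|\jbrac{x}^s\psi(t)\|_{L^2}\les\|\psi(t)\|_{L^2}+\||x|^s\psi(t)\|_{L^2}$. Since $e^{-it\Delta/2}$ is unitary, $\|\psi(t)\|_{L^2}=\|\phi(t)\|_{L^2}$. By the conjugation identity $J=e^{it\Delta/2}xe^{-it\Delta/2}$ (equivalently $|J|^s=e^{it\Delta/2}|x|^se^{-it\Delta/2}$, consistent with the definition via $M(t)$), we have $\||x|^s\psi(t)\|_{L^2}=\||J|^s\phi(t)\|_{L^2}$. Proposition \ref{EEhartree} then gives
$$\|\jbrac{x}^s\psi(t)\|_{L^2}\les\eps_0(1+\eps)\ln^2\jbrac{t}.$$
The second term is therefore $\les\eps_0(1+\eps)t^{-3/2-\delta}\ln^2\jbrac{t}\les\eps_0(1+\eps)t^{-3/2}$, since $t^{-\delta}\ln^2\jbrac{t}$ is bounded.

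\textbf{Main obstacle.} The only real content sits in Proposition \ref{profilebd}, which this lemma assumes. Within the present argument, the one point needing care is checking that the logarithmic growth of the $|J|^s$ norm is absorbed by the $t^{-\delta}$ gain, which holds for any fixed positive $\delta$. A minor bookkeeping point is to confirm that the identification $\||x|^s\psi\|_{L^2}=\||J|^s\phi\|_{L^2}$ matches the fractional definition of $|J|^s$.
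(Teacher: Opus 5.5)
Your proposal is correct and follows the paper's proof essentially step for step: Proposition \ref{profilebd} handles the main Fraunhofer term, the remainder is controlled via $\|\jbrac{x}^s\psi\|_{L^2}\les\|\phi\|_{L^2}+\||J|^s\phi\|_{L^2}$ using unitarity, the conjugation $|J|^s=e^{it\Delta/2}|x|^se^{-it\Delta/2}$ and Proposition \ref{EEhartree}, and small times are handled by $H^s\hookrightarrow L^\infty$. Your explicit absorption of the $\ln^2\jbrac{t}$ factor into $t^{-\delta}$ is exactly what the paper leaves implicit when it says this ``gives the desired decay for $R$.''
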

\begin{proof}
In view of Proposition \ref{profilebd}, it suffices to bound the remainder $R(t,x)$ in Lemma \ref{fraunhofer}. Recall that $|J|^s = e^{it\Delta/2}|x|^se^{-it\Delta/2}$, so by unitarity of $e^{it\Delta/2}$ and Proposition \ref{EEhartree}, 
$$\|\jbrac{x}^s\psi\|_{L^2} \les \|\psi\|_{L^2}+\||x|^s \psi\|_{L^2}= \|\phi\|_{L^2}+\||J|^s \phi\|_{L^2}\les \eps_0 + \eps\eps_0\ln^2\jbrac{t}.$$
This gives the desired decay for $R$. Finally, the bound near $t=0$ follows from Sobolev embedding $H^s\hookrightarrow L^\infty$ and Proposition \ref{EEhartree}.
\end{proof}

Lemma allows us to close the bootstrap for the decay of $E$, yielding (\ref{improvedbs}) and thereby completing the proof of Proposition \ref{bootstrap}:
\begin{lem}\label{Edecay} 
There holds
    $$\|E\|_{L^\infty} \les \eps_0^2(1+\eps)^{4/3} \jbrac{t}^{-2}, \quad \|\nabla_x E\|_{L^\infty} \les \eps_0^2(1+\eps)^2\jbrac{t}^{-3}\ln\jbrac{t}.$$
\end{lem}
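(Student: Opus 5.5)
The plan is to feed the decay of $\phi$ from Lemma \ref{schrdecay} and the $H^s$ bound from Proposition \ref{EEhartree} into the potential estimates of Lemma \ref{potest}, with $u=|\phi|^2$. Since $E=-\nabla V*|\phi|^2$ and $|\nabla V(x)|\les |x|^{-2}$, the bound on $E$ uses part (i) of Lemma \ref{potest}, while the bound on $\nabla_x E$ uses part (ii) with well-chosen radii. The factor $\eps_0^2(1+\eps)^2$ arises because $u$ is quadratic in $\phi$. Throughout, mass conservation gives $\|u\|_{L^1}=\|\phi\|_{L^2}^2\le \eps_0^2$. Lemma \ref{schrdecay} gives $\|u\|_{L^\infty}\les \eps_0^2(1+\eps)^2\jbrac{t}^{-3}$.

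\textbf{Bound on $E$.} Apply Lemma \ref{potest}(i) with $\alpha=2$, so $r_\alpha=3$, and take $p=1$, $q=\infty$, which gives $\theta=1/3$. This yields
$$\|E(t)\|_{L^\infty}\les \|u\|_{L^1}^{1/3}\|u\|_{L^\infty}^{2/3}\les \eps_0^{2/3}\big(\eps_0^2(1+\eps)^2\jbrac{t}^{-3}\big)^{2/3}=\eps_0^2(1+\eps)^{4/3}\jbrac{t}^{-2}.$$
This matches the stated exponent $4/3$ exactly.

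\textbf{Bound on $\nabla_x E$.} Apply Lemma \ref{potest}(ii) to $u=|\phi|^2$ with some $p>3$. The $R^{-3}\|u\|_{L^1}$ term forces $R\sim t$, so that it contributes $\eps_0^2t^{-3}$. The logarithmic term contributes $(1+\ln(R/r))\eps_0^2(1+\eps)^2t^{-3}$. It remains to control the gradient term $r^{1-3/p}\|\nabla u\|_{L^p}$ with $\|\nabla u\|_{L^p}\le 2\|\phi\|_{L^\infty}\|\nabla\phi\|_{L^p}$.

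Here the choice $p=\frac{6}{5-2s}$ from the continuity argument gives $\|\nabla\phi\|_{L^p}\les\|\phi\|_{\dot H^s}\les\eps_0(1+\eps)$, hence
$$\|\nabla u\|_{L^p}\les \eps_0^2(1+\eps)^2\jbrac{t}^{-3/2}.$$
Note that $1-3/p=s-\tfrac32=s_{gap}>0$. To make $r^{s_{gap}}t^{-3/2}\les t^{-3}$, take $r=t^{-3/(2s_{gap})}$. Then $\ln(R/r)=(1+\tfrac{3}{2s_{gap}})\ln t\les \ln\jbrac{t}$. Summing the three terms gives $\|\nabla_x E\|_{L^\infty}\les \eps_0^2(1+\eps)^2\jbrac{t}^{-3}\ln\jbrac{t}$ for $t\ge 2$.

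For $t\le 2$, the bound follows from the same estimate with $r=1$, $R=2$, using Sobolev embedding and Proposition \ref{EEhartree}.

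\textbf{Main obstacle.} The gradient term is the crux: Proposition \ref{EEhartree} only bounds $\phi$ in $H^s$ uniformly, with no decay of $\nabla\phi$. One could obtain decay through $J$, but the polynomially small choice of $r$ avoids needing it. This is affordable precisely because the logarithm converts the polynomial gap into the single $\ln\jbrac{t}$ loss that the bootstrap norm $z(t)$ permits.
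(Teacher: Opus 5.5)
Your proposal is correct and follows the same route as the paper: Lemma \ref{potest}(i) with $p=1$, $q=\infty$ for $E$, and for $\nabla_x E$, Lemma \ref{potest}(ii) with $p=\frac{6}{5-2s}$, $R\sim t$ and a polynomially small $r$, combined with $\dot H^{s-1}\hookrightarrow L^{p}$, Proposition \ref{EEhartree} and Lemma \ref{schrdecay}. Your choice $r=t^{-3/(2s_{gap})}$ is in fact the one that is needed: the paper takes $r=\jbrac{t}^{-3}$ and writes $r^{1-3/p}=\jbrac{t}^{-3/2}$, but since $1-3/p=s_{gap}<\frac12$ this only yields $\jbrac{t}^{-3s_{gap}}$, whereas your choice gives the required $t^{-3/2}$ at the cost of only an $s$-dependent constant in front of $\ln\jbrac{t}$.
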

\begin{proof}
    The decay estimate for $E$ follows directly from the first claim of Lemma \ref{potest} with $q=1, p=\infty$, noting that $|\nabla V|\les |x|^{-2}$:
    $$\|E\|_{L^\infty} \les \||\phi|^2\|_{L^1}^{1/3}\||\phi|^2\|_{L^\infty}^{2/3}\les \eps_0^{2/3}(\eps_0(1+\eps)\jbrac{t}^{-3/2})^{4/3} = \eps_0^2(1+\eps)^{4/3}\jbrac{t}^{-2}.$$
    For $\|\nabla_x E\|_{L^\infty}$, we use the second claim of Lemma \ref{potest}, taking $r=\jbrac{t}^{-3}$, $R=\jbrac{t}$, and $p=\frac{6}{5-2s}>3$, so using the bootstrap assumptions, Sobolev embedding $\dot{H}^{s-1}\hookrightarrow L^{\frac{6}{5-2s}}$ and $H^s\hookrightarrow L^\infty$, and Lemma \ref{schrdecay} yields
    \begin{align*}
        \|\nabla_x E\|_{L^\infty} &\les \jbrac{t}^{-3/2}\|\nabla_x|\phi|^2\|_{L^{\frac{6}{5-2s}}} + (1+\ln\jbrac{t}) \||\phi|^2\|_{L^\infty} + \jbrac{t}^{-3}\||\phi|^2\|_{L^1}\\
        &\les \jbrac{t}^{-3/2}\|\phi\|_{L^\infty}\|\nabla_x\phi\|_{L^{\frac{6}{5-2s}}} + \ln\jbrac{t}\|\phi\|_{L^\infty}^2 + \jbrac{t}^{-3}\|\phi\|_{L^2}^2\\
        &\les \eps_0(1+\eps)\jbrac{t}^{-3}\|\phi\|_{H^s} + \eps_0^2(1+\eps)^2 \jbrac{t}^{-3}\ln\jbrac{t}\\
        &\les \eps_0^2(1+\eps)^2\jbrac{t}^{-3}\ln\jbrac{t}.
    \end{align*}
\end{proof}

\subsection{Control of $\|\hat\psi\|_{L^\infty_\xi}$}

Now we prove Proposition \ref{profilebd}, which requires us to identify the leading order dynamics and extract a phase correction. This will form the basis for establishing modified scattering in the next section. In essence, the proof is a more sophisticated version of the proof of Lemma \ref{fraunhofer} and is inspired by Kato and Pusateri's proof of modified scattering for Hartree in \cite{kato_new_2011}.

\begin{proof}[Proof of Proposition \ref{profilebd}]
 
Since $\hat{g}(t,\eta,v) = e^{ivt\cdot\eta}\hat{f}(t,\eta,v)$ and $\hat{\psi}(t,\xi) = e^{it|\xi|^2}\hat\phi(t,\xi)$, the Duhamel formula, the change of variables $v\mapsto \xi-v$, and Plancherel yield
\begin{align*}
    \hat{\psi}(t,\xi) &= \hat{\phi}_0(\xi) - i\int_0^t \int e^{is|\xi|^2/2}|\eta|^{-2}\hat{\rho}(s,\eta)\hat{\phi}(s,\xi-\eta) d\eta ds\\
    &=\hat{\phi}_0(\xi) - i\int_0^t \iint e^{is[(\xi-v)\cdot\eta - |\eta|^2]} |\eta|^{-2}\hat{g}(s,\eta,v)\hat{\psi}(s,\xi-\eta) dv d\eta ds\\
    &= \hat{\phi}_0(\xi) - i\int_0^t \iint e^{is(v\cdot\eta - |\eta|^2)} |\eta|^{-2}\hat{g}(s,\eta,\xi-v)\hat{\psi}(s,\xi-\eta) dv d\eta ds\\
    &= \hat{\phi}_0(\xi) - i \int_0^t \iint \mathcal{F}_{v,\eta}[e^{is(v\cdot\eta - |\eta|^2)}|\eta|^{-2}]\mathcal{F}^{-1}_{v,\eta}[\hat{g}(s,\eta,\xi-v)\hat{\psi}(s,\xi-\eta)] dv d\eta ds.
\end{align*}

Using the $-3$ homogeneity of the Dirac mass, we compute and extract the factor $s^{-1}$, which makes the phase slowly varying as $s\to\infty$:
\begin{align*}
    \mathcal{F}_{v,\eta}[e^{is(v\cdot\eta-|\eta|^2)} |\eta|^{-2}] &= \iint e^{-iv\cdot v' - i\eta\cdot \eta'} e^{is(v'\cdot\eta'-|\eta'|^2)}|\eta'|^{-2}  dv' d\eta'\\
    &= (2\pi)^3 \int \delta(v-s\eta') e^{-i\eta\cdot\eta'}e^{-is|\eta'|^2}|\eta'|^{-2}d\eta'\\
    &=  (2\pi)^3 s^{-3}\int \delta(v/s-\eta') e^{-i\eta\cdot\eta'}e^{-is|\eta'|^2}|\eta'|^{-2}d\eta'\\
    &= (2\pi)^3 s^{-1} e^{-i\eta\cdot v/s} e^{i|v|^2/s}|v|^{-2}.
\end{align*}
Therefore,
\begin{align*}
    \hat{\psi}(t,\xi) &= \hat{\phi}_0(\xi) -i(2\pi)^{3} \int_0^t \iint  s^{-1} e^{i(|v|^2-\eta\cdot v)/s}|v|^{-2} \mathcal{F}^{-1}_{v,\eta}[\hat{g}(s,\eta,\xi-v)\hat{\psi}(s,\xi-\eta)] dv d\eta ds.
\end{align*}

Since the phase is small when $s$ is large, to extract the leading term $\mathcal{L}$ we replace the phase factor with $1$ and introduce a remainder $\mathcal{R}$, similar to  \cite{kato_new_2011} and the proof of Lemma \ref{fraunhofer}:
\begin{gather*}
    \hat{\psi}(t,\xi) = \hat{\phi}_0(\xi) -i \int_0^t \mathcal{L}(s,\xi) + \mathcal{R}(s,\xi) ds,\\
    \mathcal{L}(s,\xi) = (2\pi)^3 s^{-1}\iint |v|^{-2}\mathcal{F}^{-1}_{v,\eta}[\hat{g}(s,\eta,\xi-v)\hat{\psi}(s,\xi-\eta)] dv d\eta,\\
    \mathcal{R}(s,\xi) = (2\pi)^3 s^{-1}\iint |v|^{-2}[e^{i(|v|^2-v\cdot\eta)/s} -1]\mathcal{F}^{-1}_{v,\eta}[\hat{g}(s,\eta,\xi-v)\hat\psi(s,\xi-\eta)]dv d\eta.
\end{gather*}

We compute the leading term using Plancherel:
\begin{align*}
    \mathcal{L}(s,\xi) &= (2\pi)^{3} s^{-1}\iint |v|^{-2}\mathcal{F}^{-1}_{v,\eta}[\hat{g}(s,\eta,\xi-v)\hat{\psi}(s,\xi-\eta)] dv d\eta \\
    &= (2\pi)^3 s^{-1} \iint \mathcal{F}_v^{-1}[|v|^{-2}]\mathcal{F}_\eta^{-1}[\hat{g}(s,\eta,\xi-v)\hat{\psi}(s,\xi-\eta)]dv d\eta\\
    &= (2\pi)^{-3} s^{-1}\int (4\pi|v|)^{-1} \iint e^{i\eta\cdot\eta'}\hat{g}(s,\eta',\xi-v)\hat{\psi}(s,\xi-\eta')d\eta' d\eta dv\\
    &= s^{-1}\int (4\pi|v|)^{-1}\int \hat{g}(s,\eta',\xi-v)\hat{\psi}(s,\xi-\eta')\delta(\eta')d\eta' dv\\
    &=  s^{-1}\hat\psi(s,\xi)\int (4\pi|v|)^{-1} \hat{g}(s,0,\xi-v)dv\\
    &=  s^{-1}\hat{\psi}(s,\xi)(V*_\xi \hat{g})(s,0,\xi)\\
    &= s^{-1}\hat\psi(s,\xi) \int (V*_\xi g)(s,x,\xi)dx\\
    &= s^{-1}\hat\psi(s,\xi) (V* m)(s,\xi).
\end{align*}

This suggests introducing an integrating factor in order to remove the leading term, leaving only the remainder, which, in analogy to Lemma \ref{fraunhofer}, we expect to decay faster. With this in mind, we define

$$P(t,\xi) := \int_1^t s^{-1}(V* m)(s,\xi) ds, \quad B(t,\xi) := e^{iP(t,\xi)}.$$

Since
$$\p_t\hat\psi(t,\xi) = -is^{-1}\hat\psi(t,\xi) (V* m)(t,\xi) - i\mathcal{R}(t,\xi),$$
the equation for $\hat{w}(t,\xi) := B(t,\xi)\hat\psi(t,\xi)$ satisfies
$$\p_t\hat w(t,\xi) = - iB(t,
\xi)\mathcal{R}(t,\xi).$$
Crucially, $|B(t,\xi)|=1$ since the phase of $B$ is purely imaginary, so bounding $\|\hat{\psi}(t)\|_{L^\infty}$ is equivalent to bounding $\|\hat{w}(t)\|_{L^\infty}$. Therefore, we can reduce the proof of the present proposition to the following lemma:
\begin{lem}\label{remest}
    There holds
    \begin{align*}
        \|\mathcal{R}(t)\|_{L^2_\xi \cap L^\infty_\xi}&\les \eps_0^2(1+\eps) t^{-1-\delta}, \quad \forall \delta<\frac12s_{gap},\\
        [\mathcal{R}(t)]_{C^\delta}&\les \eps_0^2(1+\eps) t^{-1-\delta}, \quad \forall \delta<\min\big(\frac13s_{gap},\frac18\big).
    \end{align*}
\end{lem}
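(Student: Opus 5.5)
Rather than staying in Fourier space, I will rewrite $\mathcal{R}$ in physical variables. There the small factor $e^{i(|v|^2-v\cdot\eta)/s}-1$ turns into a difference of the Coulomb kernel evaluated at shifted points, as in the proof of Lemma \ref{fraunhofer}. I undo the Plancherel step used for $\mathcal{L}$, writing $\mathcal{F}^{-1}_{v,\eta}[\hat g(s,\eta,\xi-v)\hat\psi(s,\xi-\eta)]$ as an explicit product of a partial inverse transform of $g$ in the velocity slot and of $\psi$ modulated by $e^{i\xi\cdot}$. This gives
$$\mathcal{R}(s,\xi)=s^{-1}\iint K_s(y,v)\,g(s,y,\xi-v)\,e^{-iy\cdot\xi}\psi(s,y)\ldots\,dy\,dv$$
with kernel $K_s=|v|^{-2}(e^{i(|v|^2-v\cdot\eta)/s}-1)$ transformed in $\eta$. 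Equivalently, and more cleanly, I return to the Duhamel integrand in the form $\int e^{is(v\cdot\eta-|\eta|^2)}|\eta|^{-2}\hat g\hat\psi$. I then observe that $\mathcal{R}(s,\xi)$ equals $\mathcal{F}_x$ of $e^{-is\Delta/2}[(V*\rho)\phi](s)$, minus its leading part $s^{-1}(V*m)(s,\xi)\hat\psi(s,\xi)$. In physical space this difference is governed by (i) $V*\rho(s,x)-s^{-1}(V*m)(s,x/s)$ and (ii) the Fraunhofer remainder of $\phi$. For each factor I then apply $|e^{ia}-1|\le 2|a|^{\delta}$, with $a=(|v|^2-v\cdot\eta)/s$ producing $s^{-\delta}$ times $|v|^{2\delta}$ or $|v|^\delta|\eta|^\delta$. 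In physical variables these become fractional derivatives, that is, moments in $x$ of $g$ and of $\psi$.

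The $L^\infty_\xi$ bound then reduces to showing
$$s^{-1-\delta}\iint |v|^{-2+2\delta}\,\big|\mathcal{F}^{-1}[\ldots]\big| + s^{-1-\delta}\iint |v|^{-2+\delta}|\eta|^{\delta}\ldots\ \les\ \eps_0^2 s^{-1-\delta}.$$
The weights $|\eta|^\delta$ and $|v|^{\delta}$ fall on $\hat\psi$ and $\hat g$ as multipliers. After Plancherel they become the moments $\|\jbrac{x}^{s}\psi\|_{L^2}$ (with $2\delta<s_{gap}$, exactly as in Lemma \ref{fraunhofer}) and $\|\jbrac{x}^q g\|_{L^1\cap L^\infty}$. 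The singular factor $|v|^{-2+\cdot}$ is handled by the kernel estimate of Lemma \ref{potest}(i), interpolating $L^1$ and $L^\infty$ of $g$ in $x$ as in Lemma \ref{scatmass}. Proposition \ref{EEhartree} and Lemma \ref{vlasovweighted} give these moments with only logarithmic growth. The resulting $\ln^k\jbrac{s}$ losses are absorbed by taking $\delta$ slightly below the threshold, so the bound holds for every $\delta<\frac12 s_{gap}$. For the $L^2_\xi$ bound I put $\psi$ in $L^2$ rather than $L^1$ and keep $g$ in weighted $L^1_xL^\infty_v$, the roles being otherwise identical.

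For the H\"older seminorm I will use $|\mathcal{R}(\xi_1)-\mathcal{R}(\xi_2)|\le |\xi_1-\xi_2|^\delta\cdot(\ldots)$. The $\xi$-dependence enters through $\hat\psi(\xi-\eta)$ and $\hat g(\eta,\xi-v)$. A $\delta$-H\"older increment in $\xi$ costs an extra weight $|x|^\delta$ on $\psi$ or on $g$ (in the $x$-variable conjugate to $\eta$, respectively the velocity-spatial variable). This consumes another $\delta$ of the moment budget on $\psi$, hence $3\delta<s_{gap}$. The constraint $\delta<\frac18$ should come from the integrability of the singular kernel $|v|^{-2+2\delta}$ combined with the extra H\"older weight, when interpolating in Lemma \ref{potest}.

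The main obstacle is bookkeeping the phase $e^{i(|v|^2-v\cdot\eta)/s}-1$: its $|v|^2$ part grows in $v$, and in the transformed picture the variable $v$ is unbounded. I will split $|v|\le s^{1/2}$ from $|v|>s^{1/2}$. On the first region the small-phase bound is used. On the second I simply bound the phase factor by $2$ and use the extra decay of $|v|^{-2}$ together with the moments to gain $s^{-\delta}$. I expect the constants in Lemma \ref{remest} to emerge as $\eps_0\cdot\eps_0(1+\eps)$ from Lemma \ref{vlasovweighted} and Proposition \ref{EEhartree}.
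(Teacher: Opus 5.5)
\textbf{There is a gap in the central estimate.} Your outline follows the paper's skeleton: invert $\mathcal{F}^{-1}_{v,\eta}[\hat g(\eta,\xi-v)\hat\psi(\xi-\eta)]$ explicitly, use $|e^{ia}-1|\le 2|a|^\delta$, take moments of $\psi$ from $\jbrac{x}^s\psi\in L^2$, and absorb logarithms using the openness of the condition on $\delta$. Your reduction of the $L^\infty_\xi$ bound to $s^{-1-\delta}\iint |v|^{-2}(|v|^{2\delta}+|v|^\delta|\eta|^\delta)|\cdots|\,dv\,d\eta$ is correct. However, the step that actually closes this estimate is misidentified.

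After inversion, the integrand is bounded by $\int|\psi(y-\eta)||\tilde g(y,v)|\,dy$, where $\tilde g$ is the Fourier transform of $g$ in the \emph{velocity} variable. So $\eta$ has become a spatial shift, but $v$ is the frequency dual to velocity. Consequently:
\begin{itemize}
\item Weights in $v$ are not spatial moments of $g$. A stand-alone $|v|^\delta$ would be a fractional velocity derivative of a possibly discontinuous $g$, which you do not control.
\item Lemma \ref{potest}(i) applied ``in $x$'' acts in the wrong variable.
\item The spatial moment of $g$ actually comes from splitting $|\eta|^\delta\le|y-\eta|^\delta+|y|^\delta$.
\end{itemize}
The singular weight has to be handled as in (\ref{vxweightedest}): for $\frac32<p<3$ and $0\le q<\frac12$,
\[
\iint |v|^{-p}|y|^q|\tilde g(t,y,v)|\,dv\,dy\les \eps_0\ln^4\jbrac{t}.
\]
The proof splits at $|v|=1$. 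On $|v|\le1$ use $|\tilde g(y,v)|\le\int g(y,u)\,du$. On $|v|>1$ use Cauchy--Schwarz with $|v|^{-p}\in L^2(B_1^c)$, Plancherel in velocity, $\jbrac{y}^{-2+q}\in L^2_y$, and Lemma \ref{vlasovweighted}. Your idea of using Lemma \ref{potest}(i) survives only in this form: apply it to $|\tilde g(y,\cdot)|$ in $v$, with the pair $L^2\cap L^\infty$.

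This works without any velocity regularity because the net power of $|v|$ always stays negative. In the paper's H\"older estimate the weights are $|v|^{-2+4\delta}$ and $|y|^{4\delta}$. The requirements $p=2-4\delta>\frac32$ and $q=4\delta<\frac12$ are exactly where $\delta<\frac18$ comes from, not from interpolation in $x$.

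\textbf{Three further points.}

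First, the ``equivalent, cleaner'' physical-space picture cannot be used for estimates. $\mathcal{R}$ is not just (i) plus (ii): there is also the error from $e^{-is\Delta/2}$ only approximately removing the quadratic phase on the output side. Moreover, the only available bound on (i), Proposition \ref{asympfermpot}, is in $L^\infty_x$. Passing to $L^\infty_\xi$ through $\|\widehat{F}\|_{L^\infty}\le\|F\|_{L^1}$ costs $\|\phi(s)\|_{L^1_x}$, generically of size $s^{3/2}$. That leaves $s^{-1/2}\ln^5\jbrac{s}$, which is not integrable.

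Second, the $|v|^2$ part of the phase is not an obstacle. The bound $|a|^\delta$ already produces $|v|^{-2+2\delta}$, which is admissible since $2\delta<s_{gap}<\frac12$. So your split at $|v|=s^{1/2}$ is harmless but unnecessary. The paper instead substitutes $z=v+\eta$, which turns the phase into $v\cdot z/t$. This exhibits $\mathcal{R}$ as an inverse Fourier transform in $z$, so Plancherel gives the $L^2_\xi$ bound directly. For the H\"older bound, the paper controls $|\nabla_\xi|^{2\delta}\mathcal{R}$ in $L^{3/\delta}$ via Hausdorff--Young and then applies Morrey.

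Third, your direct-increment H\"older argument (bounding $|\mathcal{R}(\xi_1)-\mathcal{R}(\xi_2)|$ by $|\xi_1-\xi_2|^\delta$ times an extra weight) is a legitimate and simpler alternative. It goes through once (\ref{vxweightedest}) is in hand.
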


Indeed, granting the lemma for now, using the $L^\infty$ estimate for $\mathcal{R}$ and integrating in time completes the proof of Proposition \ref{profilebd}:
\begin{align*}
    \|\hat\psi(t)\|_{L^\infty_\xi} = \|\hat{w}(t)\|_{L^\infty_\xi} &\leq \|\hat{w}(1)\|_{L^\infty_\xi} + \int_1^t \|\mathcal{R}(s)\|_{L^\infty_\xi} ds \\
    &\les \eps_0 + \eps_0^2(1+\eps) \les \eps_0(1+\eps).
\end{align*}

\end{proof}

\begin{proof}[Proof of Lemma \ref{remest}] Denoting by $\tilde{g}$ the Fourier transform of $g$ with respect to $v$, we compute 
\begin{align*}
    \mathcal{F}^{-1}_{v,\eta}[\hat{g}(t,\eta,\xi-v)\hat\psi(t,\xi-\eta)] &= \mathcal{F}^{-1}_\eta \big[ \hat\psi(t,\xi-\eta)\mathcal{F}^{-1}_v[\hat{g}(t,\eta,\xi-v)]  \big]\\
    &= (2\pi)^3 e^{i\xi\cdot v} \mathcal{F}^{-1}_\eta \big[ \hat\psi(t,\xi-\eta)\hat{\tilde{g}}(t,\eta,v)  \big]\\
    &= (2\pi)^3 e^{i\xi\cdot v} \mathcal{F}^{-1}_\eta [ \hat\psi(t,\xi-\eta)] *_\eta  \mathcal{F}^{-1}_\eta[\hat{\tilde{g}}(t,\eta,v)  ]\\
    &= (2\pi)^3 e^{i\xi\cdot (v+\eta)}\int \psi(t,y-\eta)\tilde{g}(t,y,v)dy.
\end{align*}

Therefore, by making the change of variables $\eta\mapsto z=v+\eta$, we get
\begin{align}\label{Rrep}
     \mathcal{R}(t,\xi) &= t^{-1}\iint |v|^{-2}[e^{i(v\cdot\eta+|v|^2)/t} -1]\mathcal{F}^{-1}_{v,\eta}[\hat{g}(t,\eta,\xi-v)\hat\psi(t,\xi-\eta)]dv d\eta \notag \\
    &= (2\pi)^3 t^{-1}\iint |v|^{-2}[e^{i(v\cdot\eta+|v|^2)/t} -1] e^{i\xi\cdot(v+\eta)} \int \psi(t,y-\eta)\tilde{g}(t,y,v)dy d\eta dv \notag \\
    &= (2\pi)^3 t^{-1}\iint e^{i\xi\cdot z} |v|^{-2}[e^{i(v\cdot(z-v))+|v|^2)/t} -1]  \int \psi(t,y-z+v)\tilde{g}(t,y,v)dy dv dz  \notag \\
    &= (2\pi)^6 t^{-1} \mathcal{F}^{-1}_{z\to\xi} \int |v|^{-2}[e^{iv\cdot z/t}-1]\int \psi(t,y-z+v)\tilde{g}(t,y,v)dydv.
\end{align}

Using Hausdorff-Young, $|e^{ix}-1|\leq 2|x|^{\delta}$, and Young's inequality, we obtain
\begin{align*}
    |\mathcal{R}(t,\xi)|
    &\les t^{-1-\delta}\iint |v|^{-2}\left(|z|^{2\delta} +|v|^{2\delta}\right) \int |\psi(t,y-z+v)||\Tilde{g}(t,y,v)|dy dz dv\\
    &\les t^{-1-\delta} \iiint |v|^{-2}(|y-z+v|^{2\delta} + |y|^{2\delta} + |v|^{2\delta})|\psi(t,y-z+v)||\Tilde{g}(t,y,v)|dzdydv\\
    &= t^{-1-\delta} \iiint |v|^{-2}(|z|^{2\delta}+|y|^{2\delta}+|v|^{2\delta})|\psi(s,z)||\Tilde{g}(t,y,v)|dzdydv\\
    &=: \mathcal{R}_1(t,\xi)+\mathcal{R}_2(t,\xi)+\mathcal{R}_3(t,\xi).
\end{align*}

Next we prove two estimates used often in estimating the terms above. Using the pointwise bound $|\Tilde{g}(t,y,v)| \leq \int g(t,y,u)du$, Cauchy-Schwartz, Plancherel in $v$, and Lemma \ref{vlasovweighted}, for any $3/2<p<3$ and $0\leq q<1/2$ we have
\begin{align}\label{vxweightedest}
    \iint |v|^{-p} |y|^q |\tilde{g}(t,y,v)|dv dy &= \left(\iint_{|v|\leq 1} + \iint_{|v|>1} \right)|v|^{-p}|y|^q |\Tilde{g}(t,y,v)|dv dy\notag\\
    &\leq \big\||v|^{-p} \big\|_{L^1_v(B_1)}\|\jbrac{x}^q g\|_{L^1_{x,v}} + \big\||v|^{-p}\big\|_{L^2_v(B_1^c)}\int |y|^q \|\Tilde{g}(t,y)\|_{L^2_v} dy\notag\\
    &\les \|\jbrac{x}^q g\|_{L^1_{x,v}} + \big\|\jbrac{x}^{-2+q} \big\|_{L^2_x}\|\jbrac{x}^2g\|_{L^2_{x,v}}\notag\\
    &\les \eps_0\ln^4\jbrac{t}.
\end{align}

Similarly, using H\"older and Proposition \ref{EEhartree}, as long as $0\leq q<s_{gap}$ we get
\begin{equation}\label{lowmomentest}
    \||z|^q\psi\|_{L^1_z} \les \|\jbrac{z}^{-s+q}\|_{L^2_z}\|\jbrac{z}^s\psi\|_{L^2_z} \les \|\phi\|_{L^2} + \||J|^s\phi\|_{L^2} \les \eps_0(1+\eps)\ln^2\jbrac{t}.
\end{equation}

Finally, we are ready to estimate the remainder. Since $2\delta<\min(s_{gap},\frac12)= s_{gap}$, we get
\begin{align*}
    \mathcal{R}_1(t,\xi) &\les t^{-1-\delta}  \||z|^{2\delta}\psi(t,z)\|_{L^1_z}\iint |v|^{-2}|\Tilde{g}(t,y,v)|dydv  \les \eps_0^2(1+\eps) t^{-1-\delta}\ln^6\jbrac{t},\\
    \mathcal{R}_2(t,\xi) &\les t^{-1-\delta} \|\psi(t,z)\|_{L^1_z}\iint |v|^{-2}|y|^{2\delta}|\tilde{g}(t,y,v)|dydv \les \eps_0^2(1+\eps) t^{-1-\delta}\ln^6\jbrac{t},\\
    \mathcal{R}_3(t,\xi) &\les t^{-1-\delta} \|\psi(t,z)\|_{L^1_z} \iint |v|^{-2+2\delta} |\tilde{g}(t,y,v)|dydv \les \eps_0^2(1+\eps) t^{-1-\delta}\ln^6\jbrac{t}.
\end{align*}

This almost gives the $L^\infty$ estimate, but we have additional logarithmic growth factors. These can easily be absorbed, possibly with a larger implicit constant, by noting that the condition $\delta<\frac12 s_{gap}$ is an open condition and any positive power of $\jbrac{t}$ grows faster than any power of $\ln\jbrac{t}$. We will use this idea frequently throughout the rest of the paper to absorb logarithmic factors as they appear.\\

To obtain $L^2$ estimates, we apply Plancherel and Minkowski's inequality to (\ref{Rrep}), and the same estimates used to bound $\mathcal{R}$ in $L^\infty$ yield 
\begin{align*}
    \|\mathcal{R}(t,\xi)\|_{L^2_\xi} &= (2\pi)^3 t^{-1} \left\|\int |v|^{-2}[e^{iv\cdot z/t}-1]\int \psi(t,y-z+v)\tilde{g}(t,y,v)dydv \right\|_{L^2_z}\\
    &\les t^{-1-\delta}\left\|\int |v|^{-2}(|v|^{2\delta}+|z|^{2\delta}) \int \psi(t,y-z+v)\tilde{g}(t,y,v)dydv \right\|_{L^2_z}\\
    &\les t^{-1-\delta}\left\|\iint |v|^{-2}(|v|^{2\delta}+|z-v-y|^{2\delta} + |y|^{2\delta}) \psi(t,y-z+v)\tilde{g}(s,y,v)dydv \right\|_{L^2_z}\\
    &\les t^{-1-\delta}\Big(\iint |v|^{-2+2\delta}|\tilde{g}(t,y,v)| \big\| \psi(t,y-z+v)\big\|_{L^2_z} \\
    &\qquad \qquad \qquad + |v|^{-2}|\tilde{g}(t,y,v)| \big\||y-z+v|^{2\delta} \psi(t,y-z+v)\big\|_{L^2_z}\\
    &\qquad \qquad \qquad + |v|^{-2}|y|^{2\delta}|\tilde{g}(t,y,v)| \big\| \psi(t,y-z+v)\big\|_{L^2_z} dy dv\Big)\\
    &\les \eps_0^2(1+\eps) t^{-1-\delta}.
\end{align*}

In the above we used (\ref{vxweightedest}), $\||\cdot|^{\delta}\psi\|_{L^2}\les \|\phi\|_{L^2}+\||J|^s\phi\|_{L^2}\les \eps_0(1+\eps)\ln^2\jbrac{t}$, and the same idea just mentioned to absorb the logarithmic factors coming from the moment estimates. \\

To estimate the H\"older seminorm of $\mathcal{R}$ we will instead bound $|\nabla_\xi|^{2\delta}\mathcal{R}$ in $L^{3/\delta}$ and then apply Morrey's inequality $\dot{H}^{\gamma,p}\hookrightarrow C^{\alpha}$ where $\alpha=\gamma-\frac{3}{p}$, where $\dot{H}^{\gamma,p}$ is the Riesz potential space $\dot{H}^{\gamma,p}=\{u: u=|\cdot|^{\gamma-3}*v, \ v\in L^p\}$. Using (\ref{Rrep}) and the same ideas as the $L^2$ estimate but with Hausdorff-Young and Young's inequality $|v|^{\delta}|z|^{3\delta} \les |v|^{4\delta}+|z|^{4\delta}$ gives
\begin{align*}
    \||\nabla_\xi|^{2\delta}\mathcal{R}(t,\xi)\|_{L^{3/\delta}_\xi} &\les t^{-1} \left\||z|^{2\delta}\int |v|^{-2}[e^{iv\cdot z/t}-1]\int \psi(t,y-z+v)\tilde{g}(t,y,v)dydv \right\|_{L^{\frac{3}{3-\delta}}_z}\\
    &\les t^{-1-\delta}\left\|\int |v|^{-2}(|v|^{4\delta}+|z|^{4\delta}) \int \psi(t,y-z+v)\tilde{g}(t,y,v)dydv \right\|_{L^{\frac{3}{3-\delta}}_z}\\
    &\les t^{-1-\delta}\left\|\iint |v|^{-2}(|v|^{4\delta}+|z-v-y|^{4\delta} + |y|^{4\delta}) \psi(t,y-z+v)\tilde{g}(t,y,v)dydv \right\|_{L^{\frac{3}{3-\delta}}_z}\\
    &\les t^{-1-\delta}\Big(\iint |v|^{-2+4\delta}|\tilde{g}(t,y,v)| \big\| \psi(t,y-z+v)\big\|_{L^{\frac{3}{3-\delta}}_z} \\
    &\qquad \qquad \qquad + |v|^{-2}|\tilde{g}(t,y,v)| \big\||y-z+v|^{4\delta} \psi(t,y-z+v)\big\|_{L^{\frac{3}{3-\delta}}_z}\\
    &\qquad \qquad \qquad + |v|^{-2}|y|^{4\delta}|\tilde{g}(t,y,v)| \big\| \psi(t,y-z+v)\big\|_{L^{\frac{3}{3-\delta}}_z} dy dv\Big).
\end{align*}

We have
\begin{align*}
    \||x|^{\beta}\psi\|_{L^{\frac{3}{3-\delta}}}\les \|\jbrac{x}^{-s+\beta}\|_{L^{p_1}}\|\jbrac{x}^s\psi\|_{L^2} \les \eps_0(1+\eps)\ln^2\jbrac{t},
\end{align*}
which holds for all $\beta\geq 0$ satisfying
$$s-\beta>\frac{3}{p_1} = 3\Big( -\frac12 + \frac{3-\delta}{3} \Big) = \frac32 - \delta \implies \beta < s_{gap}+\delta.$$
Since $4\delta<s_{gap}+\delta$ in view of $\delta<\frac13 s_{gap}$, this leads to
\begin{align*}
    \||x|^{4\delta} \psi\|_{L^{\frac{3}{3-\delta}}} + \|\psi\|_{L^{\frac{3}{3-\delta}}} \les \eps_0(1+\eps) \ln^2\jbrac{t}.
\end{align*}
Next, recalling (\ref{vxweightedest}) and the fact that $4\delta < \frac12$, we obtain
\begin{align*}
    \iint (|v|^{-2+4\delta} +|v|^{-2}|y|^{4\delta})|\tilde{g}(t,y,v)|dydv \les \eps_0\ln^4\jbrac{t}.
\end{align*}

Therefore, Morrey's inequality yields
$$[\mathcal{R}(t)]_{C^{\delta}}\les \||\nabla_\xi|^{2\delta} \mathcal{R}(t,\xi)\|_{L^{3/\delta}_{\xi}} \les \eps_0^2(1+\eps) t^{-1-\delta}\ln^6\jbrac{t}.$$
Again, we absorb the log factors by appealing to the fact that $\delta<\min(\frac13s_{gap},\frac18)$ is an open condition.

\end{proof}

\section{Asymptotic behavior}\label{asympbehavior}
We now turn to proving Theorem \ref{modscat}. Since Theorem \ref{decay} now holds, any bounds of the form $(\cdots)\les \eps_0(1+\eps)$ or $(\cdots)\les \eps$ from previous sections will be replaced by $(\cdots)\les \eps_0$; recall Proposition \ref{bootstrap} and the arguments following it.

\subsection{Construction of the phase correction for the bosons}

In this section we construct the limiting profile $\phi_\infty$ and phase $\Psi_\infty$ and prove the asymptotic formula (\ref{phiasymp}) for $\phi$. We use techniques inspired from \cite{hayashi_asymptotics_1998}.\\

Since the remainder $\mathcal{R}$ decays at the integrable rate $t^{-1-\delta}$ in $L^2\cap L^\infty$ by Lemma \ref{remest}, we may simply integrate to obtain the limiting profile $\phi_\infty$:

$$\phi_\infty(\xi) = \lim_{t\to\infty} \hat{w}(t,\xi) = \hat{w}(1,\xi) -i \int_1^\infty B(t,\xi) \mathcal{R}(t,\xi) dt \quad \text{in } L^\infty_{\xi}\cap L^2_\xi.$$

As a consequence, it follows that
$$\|\phi_\infty(t)-\hat{w}(t)\|_{L^2_\xi \cap L^\infty_{\xi}} \les \eps_0^2 t^{-\delta}.$$ 

Next we prove convergence of $\hat{w}(t)$ in $C^\delta$ for $\delta<\min(\frac13 s_{gap},\frac18)$. We have
\begin{align*}
    [\phi_\infty-\hat{w}(t)]_{C^\delta}\leq  \int_t^\infty \big([B(t)]_{C^\delta} \|\mathcal{R}(t)\|_{L^\infty_\xi} + \|B(t)\|_{L^\infty_\xi}[\mathcal{R}(t)]_{C^\delta}\big) dt.
\end{align*}

Note that by Lemmas \ref{potest} and \ref{gcharbds},
\begin{align*}
    [B(t)]_{C^\delta}\les [P(t)]_{C^\delta} &\leq \int_1^t s^{-1}[V*m(s)]_{C^\delta} ds \les \int_1^t s^{-1}\||\cdot|^{-1+\delta}*m(s)\|_{L^\infty}ds\\
    &\les\int_1^t s^{-1}\|m(s)\|_{L^1\cap L^\infty} ds \les \eps_0 \ln^4\jbrac{t}.
\end{align*}

Hence, using Lemma \ref{remest} and using the open condition to absorb the logarithmic factors, we get
\begin{align*}
    [\phi_\infty-\hat{w}(t)]_{C^\delta} \les \int_t^\infty \eps_0^2 t^{-1-\delta}\ln^4\jbrac{t}dt \les \eps_0^2 t^{-\delta}.
\end{align*}
Hence, $\phi_\infty\in C^\delta$ since by Morrey's inequality and Proposition \ref{EEhartree}, $$[\hat{w}(1)]_{C^\delta} = [\hat\psi(1)]_{C^\delta}\les \|\hat\psi(1)\|_{H^s} \les \|\phi(1)\|_{L^2} + \||J|^s\psi(1)\|_{L^2} < \infty.$$

Next, we use the convergence of the scattering mass to derive the asymptotic phase correction in terms of $V*m_\infty$. Indeed, let
\begin{align*}
    \Psi(t,\xi) &:= P(t,\xi) - \ln(t)(V* m)(t,\xi) = \int_1^t \frac1s [(V*m)(s,\xi) - (V* m)(t,\xi)] ds,  \\
    \Psi_\infty(\xi) &:= \lim_{t\to\infty}\Psi(t,\xi) = \int_1^\infty \frac1s [(V*m)(s,\xi) - (V* m)(t,\xi)] ds .
\end{align*}

From Lemma \ref{scatmass}, $\Psi_\infty$ is well-defined and satisfies
$$\|\Psi_\infty - \Psi(t)\|_{L^\infty_{\xi}} \les \int_t^\infty s^{-1} \|V*m(s)-V*m(t)\|_{L^\infty_v} ds\les \eps_0^3 \jbrac{t}^{-1}.$$

Therefore, we compute 
\begin{align*}
    P(t,\xi) - (\Psi_\infty(\xi) + \ln(t)(V*m_\infty)(\xi)) = \big(\Psi(t,\xi)-\Psi_\infty(\xi)\big) + \ln(t) \big((V*m)(t,\xi) - (V*m_\infty)(\xi)\big).
\end{align*}
Hence, 
\begin{align*}
    \|P(t)-(\Psi_\infty + \ln(t)(V*m_\infty))\|_{L^\infty_\xi} \les \eps_0^3 \jbrac{t}^{-1}\ln(t).
\end{align*}

Finally, for the claimed asymptotic formula (\ref{phiasymp}), we write
\begin{align*}
    \phi(t,x) - \frac{e^{i|x|^2/2t}}{(it)^{3/2}}&\phi_\infty(\tfrac{x}{t}) \exp\big( -i(\Psi_\infty(\tfrac{x}{t}) + \ln(t) (V*m_\infty)(\tfrac{x}{t})) \big) \\
    &= R_1(t,x)+R_2(t,x)+R_3(t,x),
\end{align*}
where
\begin{align*}
    R_1(t,x) &= \phi(t,x)-\frac{e^{i|x|^2/2t}}{(it)^{3/2}} \hat{\psi}(t,\tfrac{x}{t}),\\
    R_2(t,x) &= \frac{e^{i|x|^2/2t}}{(it)^{3/2}}e^{-iP(t,\tfrac{x}{t})}\big( \hat{w}(t,\tfrac{x}{t}) - \phi_\infty(\tfrac{x}{t}) \big),\\
    R_3(t,x) &= \frac{e^{i|x|^2/2t}}{(it)^{3/2}} \phi_\infty(\tfrac{x}{t}) \big( e^{-iP(t,\tfrac{x}{t})} - e^{-i\big(\Psi_\infty(\tfrac{x}{t}) + \ln(t) (V*m_\infty)(\tfrac{x}{t})\big)} \big).
\end{align*}
First, $R_1$ is the remainder in Lemma \ref{fraunhofer} and satisfies, by Proposition \ref{EEhartree},
\begin{align*}
    \|R_1(t)\|_{L^\infty_x} &\les t^{-3/2-\delta} \|\jbrac{x}^2\psi(t)\|_{L^2_x} \les \eps_0 t^{-3/2-\delta} \ln^2\jbrac{t},\\
    \|R_1(t)\|_{L^2_x} &\les t^{-1} \|\jbrac{x}^2\psi(t)\|_{L^2_x} \les \eps_0 t^{-1}\ln^2\jbrac{t}.
\end{align*}

Similar to Lemma \ref{remest}, the $\ln^2\jbrac{t}$ factor appearing in the first estimate can be absorbed by slightly increasing $\delta$ in the proof of Lemma \ref{fraunhofer}, since the condition $\delta<\frac12 s_{gap}$ is open. The bound on $R_2$ follows from the convergence of $\hat{w}$ to $\phi_\infty$: 
\begin{align*}
    \|R_2(t)\|_{L^\infty_x} &\les t^{-3/2}\|\hat{w}(t)-\phi_\infty\|_{L^\infty} \les \eps_0^2 t^{-3/2-\delta},\\
    \|R_2(t)\|_{L^2_x} &\les t^{-3/2}\|\hat{w}(t,\tfrac{x}{t})-\phi_\infty(\tfrac{x}{t})\|_{L^2_x} = \|\hat{w}(t,y)-\phi_\infty(y)\|_{L^2_y} \les \eps_0^2 t^{-\delta}.
\end{align*}

Finally, the bound on $R_3$ is a consequence of the convergence of $P(t)$ and the bound $|e^{ix}-e^{iy}|\leq 2|x-y|$:
\begin{align*}
    \|R_3(t)\|_{L^\infty_x} &\les t^{-3/2}\|P(t) - (\Psi_\infty + \ln(t)(V*m_\infty)) \|_{L^\infty} \|\phi_\infty\|_{L^\infty} \les \eps_0^4 t^{-3/2-1}\ln(t),\\
    \|R_3(t)\|_{L^2_x} &\les t^{-3/2}\|P(t) - (\Psi_\infty + \ln(t)(V*m_\infty)) \|_{L^\infty} \|\phi_\infty(\tfrac{x}{t})\|_{L^2_x} \les \eps_0^4 t^{-1}\ln(t).
\end{align*}

\subsection{Asymptotic force generated by the bosons}

In this section, we identify and prove convergence to the asymptotic dynamics of the field $E$ along the trajectories of free transport. This will be the key estimate enabling us to prove modified scattering for the fermionic subsystem.\\

Let $|\alpha|\leq 1$ be an arbitrary multi-index. By factoring out the Schr\"odinger free dynamics and using $\hat{\bar\phi}(\xi) = \overline{\hat\phi(-\xi)}$,
\begin{align*}
    (\p^\alpha_x E)(t,x+vt) &= -\iint e^{i(x+vt)\cdot\xi}  (i\xi)^{1+\alpha}\hat{V}(\xi) \hat{\phi}(t,\eta)\overline{\hat\phi(t,\eta-\xi)}d\eta d\xi\\
    &= -\iint e^{i(x+vt)\cdot\xi}(i\xi)^{1+\alpha}\hat{V}(\xi) e^{it|\xi|^2/2} e^{-it\xi\cdot\eta}  \hat{\psi}(t,\eta)\overline{\hat\psi(t,\eta-\xi)}d\eta d\xi\\
    &= -\int e^{i(x+vt)\cdot\xi} (i\xi)^{1+\alpha}\hat{V}(\xi) e^{it|\xi|^2/2}   \mathcal{F}^{-1}_{\eta}[\hat{\psi}(t,\eta)\overline{\hat\psi(t,\eta-\xi)}](-t\xi) d\xi.
\end{align*}

We compute
\begin{align*}
    \mathcal{F}^{-1}_{\eta}[\hat{\psi}(t,\eta)\overline{\hat\psi(t,\eta-\xi)}](-t\xi) &= [\mathcal{F}^{-1}_{\eta}(\hat\psi(t,\eta)) * \mathcal{F}^{-1}_{\eta}(\hat{\bar{\psi}}(t,\xi-\eta))](-t\xi)\\
    &= \int e^{i(-t\xi-y)\cdot\xi} \psi(t,y)\overline{\psi(y+t\xi)} dy.
\end{align*}

Inserting this back into the expression for $(\p^\alpha_xE)(t,x+vt)$ and changing variables $\sigma = -t\xi$ gives
\begin{align*}
    (\p^\alpha_x E)(t,x+vt) &= -\iint e^{i((x-y)+vt)\cdot\xi}e^{-it|\xi|^2/2} (i\xi)\hat{V}(\xi) \psi(t,y)\overline{\psi(t,y+t\xi) }dy d\xi\\
    &= t^{-2-|\alpha|}\iint e^{-i(x+y)\cdot\sigma/t}e^{-i|\sigma|^2/2t} e^{-iv\cdot\sigma} (i\sigma)^{1+\alpha}\hat{V}(\sigma)  \psi(t,y)\overline{\psi(t,y-\sigma)} dy d\sigma.
\end{align*}

In the $t\to\infty$ limit, the factors with phase $O(1/t)$ converge to $1$, so to extract the leading order term we replace those factors by $1$ and simplify:
\begin{align*}
    \text{Leading order term} &= t^{-2-|\alpha|}\iint e^{-iv\cdot\sigma} (i\sigma)^{1+\alpha}\hat{V}(\sigma) \psi(t,y)\overline{\psi(t,y-\sigma)} dyd\sigma \\
    &= t^{-2-|\alpha|}\mathcal{F}_{\sigma}\big\{\widehat{\p^\alpha\nabla V}(\sigma) [\psi(t,y)*\bar\psi(t,-y)](\sigma)\big\}(v)\\
    &= t^{-2-|\alpha|} [(\p^\alpha\nabla V)(-u) * (\hat\psi(t,u)\bar{\hat{\psi}}(t,u))](v)\\
    &= -t^{-2-|\alpha|} [\p^\alpha\nabla V * |\hat\psi(t)|^2](v).
\end{align*}
Furthermore, since $\hat{w}(t,\xi) = B(t,\xi)\hat{\psi}(t,\xi)$ converges to a modified asymptotic profile $\phi_\infty(\xi)$ as $t\to\infty$, and $|B(t,\xi)|=1$, we expect $-\nabla V *|\hat\psi(t)|^2$ to be well-approximated by the limiting field profile 
$$E_\infty(v) := -(\nabla V * |\phi_\infty|^2)(v).$$
Putting everything together, we write
$$(\p^\alpha_x E)(t,x+vt) = t^{-2-|\alpha|}\p^\alpha_v E_\infty(v) + t^{-2-|\alpha|}\text{Err}^\alpha(t,x,v),$$
where
\begin{align*}
    \text{Err}^\alpha(t,x,v) &= \text{Err}^\alpha_1(t,x,v)+\text{Err}^\alpha_2(t,x,v) + \text{Err}^\alpha_3(t,v),\\
    \text{Err}^\alpha_1(t,x,v) &= \iint (e^{-i(x+y)\cdot\sigma/t}-1) e^{-iv\cdot\sigma} (i\sigma)^{1+\alpha}\hat{V}(\sigma)  \psi(t,y)\overline{\psi(t,y-\sigma)} dy d\sigma,\\
    \text{Err}^\alpha_2(t,x,v) &= \iint e^{-i(x+y)\cdot\sigma/t}(e^{-i|\sigma|^2/2t}-1)  e^{-iv\cdot\sigma} (i\sigma)^{1+\alpha}\hat{V}(\sigma) \psi(t,y)\overline{\psi(t,y-\sigma)} dy d\sigma,\\
    \text{Err}^\alpha_3(t,v) &= -\big[\p^\alpha\nabla V * (|\hat\psi|^2-|\phi_\infty|^2)\big](v).
\end{align*}

The error is estimated in the following proposition.

\begin{prop}\label{Eerrest}
    For any $\delta<\min(\frac13 s_{gap},\frac18)$, the function $E_\infty$ is in $C^{1,\delta}$ with $\|E_\infty\|_{C^{1,\delta}}\les \eps_0^2$ and 
\begin{align*}
    \|E(t,x+vt)-t^{-2}E_\infty(v)\|_{L^\infty_v}&\les \eps_0^2 \jbrac{x}^{2\delta} t^{-2-\delta},\\
    \|(\nabla_xE)(t,x+vt)-t^{-3}\nabla E_\infty(v)\|_{L^\infty_v}&\les \eps_0^2 \jbrac{x}^{2\delta} t^{-3-\delta}.
\end{align*}
\end{prop}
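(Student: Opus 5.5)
The plan is to estimate the three error terms $\text{Err}^\alpha_1,\text{Err}^\alpha_2,\text{Err}^\alpha_3$ separately for $|\alpha|\le 1$, in the same spirit as the proof of Lemma \ref{remest}. We then establish the regularity of $E_\infty$ directly from the properties of $\phi_\infty$.

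\emph{Regularity of $E_\infty$.} We have $\phi_\infty\in L^2\cap L^\infty\cap C^\delta$ with norms $\les\eps_0$. Hence $|\phi_\infty|^2\in L^1\cap L^\infty\cap C^\delta$ with norm $\les\eps_0^2$. Since $E_\infty=-\nabla V*|\phi_\infty|^2$ is a Riesz-type potential of order one, Lemma \ref{potest}(i) bounds $\|E_\infty\|_{L^\infty}$. The standard Schauder estimate for the Newtonian potential, namely $\nabla^2(V*u)\in C^\delta$ whenever $u\in C^\delta\cap L^1$, bounds $[\nabla E_\infty]_{C^\delta}$ and $\|\nabla E_\infty\|_{L^\infty}$. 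This gives $\|E_\infty\|_{C^{1,\delta}}\les\eps_0^2$.

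\emph{Terms $\text{Err}_1$ and $\text{Err}_2$.} Write $(i\sigma)^{1+\alpha}\hat V(\sigma)$, a homogeneous symbol of degree $|\alpha|-1\in\{-1,0\}$. We use $|e^{ia}-1|\le 2|a|^{\delta'}$ with $a=(x+y)\cdot\sigma/t$ or $a=|\sigma|^2/2t$. This gains $t^{-\delta'}$ at the cost of a factor $|x+y|^{\delta'}|\sigma|^{\delta'}$ or $|\sigma|^{2\delta'}$. Using $|\sigma|\le|y|+|y-\sigma|$, $\jbrac{x+y}\les\jbrac{x}\jbrac{y}$, and taking absolute values in $\sigma,y$, each term is bounded by
\[
t^{-\delta'}\jbrac{x}^{\delta'}\iint |\sigma|^{|\alpha|-1}\jbrac{y}^{2\delta'}\jbrac{y-\sigma}^{2\delta'}|\psi(t,y)||\psi(t,y-\sigma)|\,dy\,d\sigma .
\]
The same argument with $\delta'=2\delta$ yields the $\jbrac{x}^{2\delta}$ weight. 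For $|\alpha|=1$ (symbol bounded) this is $\les\|\jbrac{y}^{2\delta'}\psi\|_{L^1}^2\les\eps_0^2\ln^4\jbrac{t}$ by (\ref{lowmomentest}), provided $2\delta'<s_{gap}$. For $|\alpha|=0$ the kernel $|\sigma|^{-1}$ is locally integrable. We split $|\sigma|\le1$ and $|\sigma|>1$, bounding the first part by $\|\jbrac{\cdot}^{2\delta'}\psi\|_{L^1}\|\jbrac{\cdot}^{2\delta'}\psi\|_{L^\infty}$, or more safely by Lemma \ref{potest}(i) applied to $|\cdot|^{-1}*(\jbrac{\cdot}^{2\delta'}|\psi|)$ using $L^1$ and $L^{2}$-type moment bounds. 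Here $\||y|^{q}\psi\|_{L^2}$ is controlled by $\||J|^s\phi\|_{L^2}$. The logarithmic losses are absorbed by the openness of the condition on $\delta$, as before.

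\emph{Term $\text{Err}_3$ (main obstacle).} Here $|\hat\psi|^2=|\hat w|^2$, so $|\hat\psi|^2-|\phi_\infty|^2=(|\hat w|-|\phi_\infty|)(|\hat w|+|\phi_\infty|)$. Both factors are small in $L^1\cap L^\infty$, by Cauchy--Schwarz in $L^2$ together with the convergence $\|\hat w(t)-\phi_\infty\|_{L^2\cap L^\infty}\les\eps_0^2t^{-\delta}$. For $\alpha=0$, Lemma \ref{potest}(i) then gives $\|\text{Err}_3\|_{L^\infty}\les\eps_0^2t^{-\delta}$ directly. For $|\alpha|=1$ a second-derivative Newtonian potential is not bounded on $L^\infty$, so we instead use Lemma \ref{potest}(ii)-type splitting in Hölder form. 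Near the singularity we use the $C^\delta$ convergence $[\hat w(t)-\phi_\infty]_{C^\delta}\les\eps_0^2t^{-\delta}$, and since $|B|=1$ with $|\hat\psi|^2=|\hat w|^2$, this transfers to $[|\hat\psi|^2-|\phi_\infty|^2]_{C^\delta}\les\eps_0^2t^{-\delta}$. The intermediate region costs only a logarithm, and the far region is handled by $L^1$. Concretely, the Schauder bound gives $\|\nabla^2 V*u\|_{L^\infty}\les\|u\|_{C^\delta}+\|u\|_{L^1}$, which suffices. We expect the delicate point to be this use of Hölder convergence rather than mere $L^\infty$ convergence, which is exactly why the $C^\delta$ estimate in Lemma \ref{remest} and the restriction $\delta<\min(\frac13 s_{gap},\frac18)$ were needed. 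Combining the three bounds yields both claimed estimates.
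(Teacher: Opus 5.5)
Your argument follows the paper's proof closely. You treat the regularity of $E_\infty$ by elliptic/Schauder estimates. You handle $\text{Err}_3$ by Lemma \ref{potest}(i) for $\alpha=0$, and for $|\alpha|=1$ by the H\"older bound (\ref{holderest}) together with the $C^\delta$ convergence of $\hat w(t)$. You handle $\text{Err}_1,\text{Err}_2$ by $|e^{ia}-1|\le 2|a|^{\delta'}$ plus the moment bound (\ref{lowmomentest}).

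\textbf{The gap.} The choice $\delta'=2\delta$ fails as written. Your own requirement $2\delta'<s_{gap}$ then becomes $4\delta<s_{gap}$, which the hypothesis does not imply. The range $\frac14 s_{gap}\le\delta<\min(\frac13 s_{gap},\frac18)$ is never empty, and on it $\|\jbrac{y}^{4\delta}\psi\|_{L^1}$ is not controlled by $\|\jbrac{y}^{s}\psi\|_{L^2}$, since that needs $s-4\delta>\frac32$. The moment $|y|^{2\delta'}$ genuinely arises in your bound: where $\sigma\approx y$ and $|y|\gg|x|$, the phase $(x+y)\cdot\sigma/t$ is of size $|y|^2/t$.

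\textbf{The fix.} Only $t^{-\delta}$ is claimed, so lower the exponent. Take $\delta<\delta'\le 2\delta$ with $\delta'<\frac12 s_{gap}$; this is possible because $\delta<\frac13 s_{gap}<\frac12 s_{gap}$. Then $t^{-\delta'}\ln^C\jbrac{t}\les t^{-\delta}$ and $\jbrac{x}^{\delta'}\le\jbrac{x}^{2\delta}$, and the rest of your argument goes through unchanged.

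\textbf{Comparison with the paper.} The paper also uses the exponent $2\delta$ in $\text{Err}_1$. It hides the same issue inside its bound $\int|\sigma|^{-p}|\psi(t,y-\sigma)|\,d\sigma\les\eps_0\ln^2\jbrac{t}$ for $p\in(-s_{gap},3)$, which is uniform in $y$ only when $p\ge 0$. For $|\alpha|=1$ one has $p=-2\delta<0$, and the honest bound carries a factor $\jbrac{y}^{2\delta}$, which produces the same $|y|^{4\delta}$ moment. Your splitting $|\sigma|\le|y|+|y-\sigma|$ is therefore the more careful bookkeeping, and the reduced exponent repairs both arguments. For the $|\sigma|^{-1}$ kernel you use Lemma \ref{potest}(i) with weighted $L^1$ and $L^2$ norms instead of the paper's near/far split with $\|\psi\|_{L^\infty}$; both work.

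\textbf{A smaller point.} For the $C^\delta$ part of $\text{Err}_3$, write $|\hat w|^2-|\phi_\infty|^2=\mathrm{Re}\big((\hat w-\phi_\infty)\overline{(\hat w+\phi_\infty)}\big)$ and use the product estimate in $C^\delta$, as the paper implicitly does. The factorization through $|\hat w|-|\phi_\infty|$ that you used for $L^1\cap L^\infty$ does not transfer H\"older rates, because $[\,|\hat w|-|\phi_\infty|\,]_{C^\delta}$ is not controlled by $\|\hat w-\phi_\infty\|_{C^\delta}$.
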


\begin{proof}
    The fact that $E_\infty\in C^{1,\delta}$ follows immediately from elliptic regularity and the fact that $\phi_\infty\in C^\delta\cap L^2\cap L^\infty$.\\
    
    To prove convergence, we start with $\text{Err}^\alpha_3$ since it's the easiest. Lemma \ref{potest} and the convergence of $\hat{w}$ to $\phi_\infty$ in $L^2\cap L^\infty$ gives
\begin{align*}
    |\text{Err}^0_3(t,v)| &\leq |\nabla V * (|\phi_\infty|^2-|\hat\psi(t)|^2)|\\
    &\les \||\phi_\infty|^2-|\hat\psi(t)|^2\|_{L^1}^{1/3}\||\phi_\infty|^2-|\hat\psi(t)|^2\|_{L^\infty}^{2/3}\\
    &\les \|\phi_\infty+\hat{w}(t)\|_{L^2\cap L^\infty}\|\phi_\infty-\hat{w}(t)\|_{L^2}^{1/3}\|\phi_\infty-\hat{w}(t)\|_{L^\infty}^{2/3}\\
    &\les \eps_0^3 t^{-\delta}.
\end{align*}

To handle $|\alpha|=1$, observe that for any $u$, 
\begin{equation}\label{holderest}
        \||\cdot|^{-3}*u\|_{L^\infty} \les \||\cdot|^{-3+\delta}\|_{L^1(B_1)}[u]_{C^\delta} + \||\cdot|^{-3}\|_{L^\infty(B_1^c)}\|u\|_{L^1} \les \|u\|_{C^\delta} + \|u\|_{L^1}.
    \end{equation}

so that the convergence of $\hat{w}$ to $\phi_\infty$ in $L^2\cap C^\delta$ yields
\begin{align*}
    |\text{Err}^\alpha_3(t,v)| &\les \||\phi_\infty|^2-|\hat\psi(t)|^2\|_{C^\delta}+\||\phi_\infty|^2-|\hat\psi(t)|^2\|_{L^1}\\
    &\les \|\phi_\infty+\hat{w}(t)\|_{C^\delta}\|\phi_\infty-\hat{w}(t)\|_{C^\delta}+\|\phi_\infty+\hat{w}(t)\|_{L^2}\|\phi_\infty-\hat{w}(t)\|_{L^2}\\
    &\les \eps_0^3 t^{-\delta}.
\end{align*}

To estimate the other two terms, first note that as long as $p\in(-s_{gap},3)$ we have
\begin{align*}
    \int |\sigma|^{-p} |\psi(t,y-\sigma)|d\sigma &= \left(\int_{|\sigma|\leq 1} + \int_{|\sigma|>1}\right) |\sigma|^{-p} |\psi(t,y-\sigma)|d\sigma\\
    &\les \||\cdot|^{-p}\|_{L^1(B_1)} \|\psi(t)\|_{L^\infty} + \|\jbrac{\cdot}^{-p-s}\|_{L^2(B_1^c)}\||x|^s\psi(t,x)\|_{L^2}\\
    &\les \|\phi\|_{H^s} + \|J^s\phi\|_{L^2}\\
    &\les \eps_0\ln^2\jbrac{t}.
\end{align*}
Therefore, using $|e^{ix}-1|\leq 2|x|^{2\delta}$ and recalling (\ref{lowmomentest}),
\begin{align*}
    |\text{Err}^\alpha_1(t,x,v)| &\les t^{-2\delta}\iint |x+y|^{2\delta} |\sigma|^{-1+|\alpha|+2\delta} |\psi(t,y)||\psi(t,y-\sigma)|dyd\sigma\\
    &\les \eps_0 t^{-2\delta}  \ln^2\jbrac{t} \big(|x|^{2\delta} \|\psi(t,y)\|_{L^1_y} + \||y|^{2\delta} \psi(t,y)\|_{L^1_y} \big)\\
    &\les \eps_0^2 \jbrac{x}^{2\delta} t^{-\delta}\\
    |\text{Err}^\alpha_2(t,x,v)| &\les t^{-2\delta}\iint |\sigma|^{-1+|\alpha|+2\delta} |\psi(t,y)||\psi(t,y-\sigma)|dyd\sigma\\
    &\les \eps_0 t^{-2\delta} \ln^2\jbrac{t}\|\psi(t)\|_{L^1}\\
    &\les \eps_0^2 t^{-\delta}.
\end{align*}

Also $1-|\alpha|-2\delta\in (-s_{gap},3)$ since $|\alpha|\leq 1$ and $2\delta < \frac23 s_{gap}<s_{gap}$.
\end{proof}

In a similar vein to the previous result and Proposition \ref{asympfermpot}, we also have the following self-similar asymptotic expression for the force $E$ in the rest frame, which proves (\ref{Easymp}).

\begin{prop}\label{asympbosonforce}
    There holds 
    $$\|E(t,x)-t^{-2}E_\infty(\tfrac{x}{t})\|_{L^\infty_x} \les \eps_0^2 t^{-2-\delta}$$
\end{prop}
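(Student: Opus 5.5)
We prove this exactly as Proposition~\ref{Eerrest}, but in the rest frame: we never shift $x\mapsto x+vt$. The aim is to write $E(t,x)$ as $t^{-2}$ times a convolution of $\nabla V$ against $|\hat\psi(t)|^2$ evaluated at $x/t$, plus errors that are uniform in $x$.

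\textbf{Step 1: Fourier representation.} Start from
$$E(t,x) = -\iint e^{ix\cdot\xi}(i\xi)\hat V(\xi)\hat\phi(t,\eta)\overline{\hat\phi(t,\eta-\xi)}\,d\eta\, d\xi.$$
Substitute $\hat\phi = e^{-it|\cdot|^2/2}\hat\psi$ and repeat the computation preceding Proposition~\ref{Eerrest} with $v=0$ and $x$ in place of $x$. After the change of variables $\sigma=-t\xi$ this gives
$$E(t,x) = t^{-2}\iint e^{-i(x+y)\cdot\sigma/t}e^{-i|\sigma|^2/2t}(i\sigma)\hat V(\sigma)\psi(t,y)\overline{\psi(t,y-\sigma)}\,dy\, d\sigma.$$

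\textbf{Step 2: cleaner route via Fraunhofer.} It is more convenient to work directly with $|\phi|^2$. By Lemma~\ref{fraunhofer},
$$|\phi(t,x)|^2 = t^{-3}|\hat\psi(t,x/t)|^2 + \mathcal{E}(t,x),$$
where
$$\mathcal{E} = 2\,\mathrm{Re}\big(\overline{(it)^{-3/2}e^{i|x|^2/2t}\hat\psi(t,x/t)}\,R\big) + |R|^2 .$$
By homogeneity, $\nabla V*\big(t^{-3}|\hat\psi(t,\cdot/t)|^2\big)(x) = t^{-2}(\nabla V*|\hat\psi(t)|^2)(x/t)$. The leading term then converges to $t^{-2}E_\infty(x/t)$ up to $t^{-2}\,\mathrm{Err}^0_3$, which is already bounded by $\eps_0^3t^{-2-\delta}$ in Proposition~\ref{Eerrest}.

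\textbf{Step 3: bounding the remainder.} It remains to show $\|\nabla V*\mathcal{E}(t)\|_{L^\infty}\les \eps_0^2t^{-2-\delta}$. Apply Lemma~\ref{potest}(i) with $\alpha=2$, $p=1$, $q=\infty$:
$$\|\nabla V*\mathcal{E}\|_{L^\infty}\les \|\mathcal{E}\|_{L^1}^{1/3}\|\mathcal{E}\|_{L^\infty}^{2/3}.$$
For the $L^\infty$ factor, Lemma~\ref{fraunhofer} with Proposition~\ref{EEhartree} gives $\|R\|_{L^\infty}\les\eps_0 t^{-3/2-\delta}$ (logarithms absorbed by openness of $\delta<\tfrac12 s_{gap}$). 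Together with $\|\hat\psi\|_{L^\infty}\les\eps_0$ this yields $\|\mathcal{E}\|_{L^\infty}\les\eps_0^2t^{-3-\delta}$. For the $L^1$ factor, Cauchy--Schwarz gives
$$\|\mathcal{E}\|_{L^1}\les \big(\|\phi\|_{L^2}+\|R\|_{L^2}\big)\|R\|_{L^2}\les \eps_0^2 t^{-s/2}.$$
Combining, the bound is $\eps_0^2t^{-s/6}t^{-2-2\delta/3}$. Since $s/6>1/4>\delta/3$, the total decay is $t^{-2-\delta}$ or better.

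\textbf{Main obstacle.} The one delicate point is the $L^1$ bound on the cross term, since Lemma~\ref{fraunhofer} only controls $R$ in $L^2$ and $L^\infty$. The Cauchy--Schwarz pairing above handles it, because $t^{-3/2}\hat\psi(t,\cdot/t)$ has $L^2$ norm $\|\hat\psi\|_{L^2}\les\eps_0$. The exponent bookkeeping also needs checking: $\delta<\tfrac18$ and $s>\tfrac32$ make the interpolated power strictly better than $t^{-2-\delta}$.

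If this route fails, the fallback is Step 1's direct representation. There we would estimate the two phase errors exactly as $\mathrm{Err}_1$ and $\mathrm{Err}_2$, using $|e^{ia}-1|\les|a|^{2\delta}$. The $|x|^{2\delta}$ weight from $\mathrm{Err}_1$ would then be problematic for $L^\infty_x$ uniformity, which is why the Fraunhofer route is preferred.
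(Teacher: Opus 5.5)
Your proof is correct and is essentially the paper's argument. Both routes apply Lemma \ref{potest}(i) to interpolate an $L^1$ and $L^\infty$ bound on a density difference, and that difference is controlled by the Fraunhofer remainder together with the $L^2\cap L^\infty$ convergence of $\hat w$ to $\phi_\infty$.

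The only difference is bookkeeping. The paper feeds the packaged formula (\ref{phiasymp}) into $\big||a|^2-|b|^2\big|\le(|a|+|b|)|a-b|$. You instead split off $t^{-3}|\hat\psi(t,x/t)|^2$ by homogeneity and reuse $\mathrm{Err}^0_3$, which has the mild advantage that the phase correction (and hence Lemma \ref{scatmass}) never enters. Your Step 1 and the fallback route are unnecessary.
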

\begin{proof}
    First, note that the change of variables $z=vt$  and the $-2$ homogeneity of $\nabla V$ yields
$$t^{-2}E_\infty\big( \tfrac{x}{t}) = -t^{-2}\int \nabla V(v) \phi_\infty\big(\tfrac{x}{t}-v\big)dv = -t^{-3} \int \nabla V(z) \phi_\infty\big(\tfrac{x-z}{t}\big)dz.$$

Then Lemma \ref{potest} yields
    \begin{align*}
        \|E(t,x)-t^{-2}E_\infty(\tfrac{x}{t})\|_{L^\infty_x} \les \big\||\phi(t,x)|^2-t^{-3}|\phi_\infty(\tfrac{x}{t})|^2\big\|_{L^1_x}^{1/3}\big\||\phi(t,x)|^2-t^{-3}|\phi_\infty(\tfrac{x}{t})|^2\big\|_{L^\infty_x}^{2/3}.
    \end{align*}
    Also,
    \begin{align*}
        |\phi(t,x)&|^2-t^{-3}|\phi_\infty(\tfrac{x}{t})|^2\\ 
        &\leq \big(|\phi(t,x)|+t^{-3/2}|\phi_\infty(\tfrac{x}{t})|\big) \Big|\phi(t,x)- \frac{e^{i|x|^2/2t}}{(it)^{3/2}} \phi_\infty(\tfrac{x}{t}) \exp\big(-i\Psi_\infty(\tfrac{x}{t}) - i\ln(t)(V*m_\infty)(\tfrac{x}{t}) \big)\Big|.
    \end{align*}
    Hence, using the convergence of the asymptotic formula for $\phi$ in $L^2_x\cap L^\infty_x$, we get
    {\small\begin{align*}
        \|E(t,x)-t^{-2}E_\infty(\tfrac{x}{t})\|_{L^\infty_x} \les \big( (\|\phi\|_{L^2} + \|\phi_\infty\|_{L^2})t^{-\delta}\big)^{1/3}\big( (\|\phi\|_{L^\infty} + t^{-3/2}\|\phi_\infty\|_{L^\infty})t^{-3/2-\delta}\big)^{2/3} \les \eps_0^2 t^{-2-\delta}.
    \end{align*}}
\end{proof}

\subsection{Asymptotic spatial characteristics}\label{sec:xlim}
In this section we construct the asymptotic spatial characteristics of the fermions. Since the fermions disperse, the asymptotic behavior of the particle positions can only be meaningfully defined in a moving frame. However, long-range effects cause deviations from the frame associated to free transport, the obvious candidate for a frame in which to establish a limit. To derive the modified frame, consider the following heuristics. We know that in the frame of the free flow $(x,v)\mapsto (x+vt,v)$, the force $E$ generated by the bosons can be well-approximated by $t^{-2}E_\infty(v)$ for large times by Proposition \ref{Eerrest}, and moreover by Proposition \ref{Vlim} the velocity limit is given by $V_\infty = v+O(\eps_0^2)$, so the force acting on a particle $(x,v)$ for $t$ very large is roughly $t^{-2}E_\infty(v)$, which means the asymptotic position of said particle is roughly $x+vt-\ln(t)E_\infty(v)$ by Newton's laws. This is the frame in which we establish a limit. To this end, define the modified forward free flow and its inverse:
\begin{align*}
    \mathcal{M}_t(x,v)&=(x+vt-\ln(t)E_\infty(v),v),\\
    \mathcal{M}_t^{-1}(x,v)&=(x-vt+\ln(t)E_\infty(v),v).
\end{align*}
The object which we expect to have a limit as $t\to\infty$ is
\begin{equation*}
    \mathcal{A}_{t,s} := \mathcal{M}_t^{-1} \circ \Phi_{t,s},
\end{equation*}
where recall that $\Phi_{t,s}$ is the \emph{forward} ($t\geq s$) nonlinear flow. The limit $\mathcal{A}_{\infty,s}(x,v)$ which we will construct later in this section describes the asymptotic coordinates \emph{in the frame of the modified free flow} of a particle initially at $(x,v)$ at time $s$. In particular this means the forward particle trajectories may be approximated by $\Phi_{t,s}\approx \mathcal{M}_t\circ \mathcal{A}_{\infty,s}$ for large $t$. We write 
$$\mathcal{A}_{t,s}(x,v) := (Z(t,s,x,v),V(t,s,x,v)).$$ Also, recall that we write $\mathcal{S}_t$ for the support of $f(t)$: 
$$\mathcal{S}_t := \{(x,v)\in \R^6: f(t,x,v)\neq 0\}.$$

Before constructing the spatial limit, we need some preliminary estimates on 
$$Y(t,s,x,v):= X(t,s,x,v)-tV(t,s,x,v).$$

\begin{lem}\label{Ybds}
    For any $(x,v)\in\mathcal{S}_s$ and $t\geq s$ there holds
    \begin{align*}
        |Y(t,s,x,v)|&\les \ln\jbrac{t}\\
        |\nabla_x Y(t,s,x,v)|&\les \ln^2\jbrac{t}\\
        |\nabla_v Y(t,s,x,v)|&\les s+\eps_0^2\ln^2\jbrac{t}
    \end{align*}
\end{lem}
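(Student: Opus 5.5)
The approach is to differentiate $Y$ in $t$ and integrate. Since $\frac{d}{dt}X = V$ and $\frac{d}{dt}V = E(t,X)$, we have
$$\p_t Y(t,s,x,v) = V - V - tE(t,X(t,s,x,v)) = -tE(t,X(t,s,x,v)), \qquad Y(s,s,x,v) = x - sv.$$
Hence
$$Y(t,s,x,v) = x - sv - \int_s^t \tau E(\tau,X(\tau,s,x,v))\,d\tau.$$
For the first bound, $(x,v)\in\mathcal{S}_s$ gives $|x-sv|\les 1+s$. This comes from the support bounds of Lemma \ref{gcharbds} translated back: $f(s,x,v)=g(s,x-sv,v)$, so $|x-sv|\les \ln\jbrac{s}$ and $|v|\les 1$. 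I would therefore use $|x-sv|\les \ln\jbrac{s}\le \ln\jbrac{t}$. Together with $\|E(\tau)\|_{L^\infty}\les \eps_0^2\jbrac{\tau}^{-2}$ from Theorem \ref{decay}, the integral is $\les \eps_0^2\ln\jbrac{t}$, which gives $|Y|\les\ln\jbrac{t}$.

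For the derivatives, differentiate the same identity:
$$\nabla_x Y = I - \int_s^t \tau\, \nabla_x E(\tau,X)\,\nabla_x X(\tau,s,x,v)\,d\tau, \qquad \nabla_v Y = -sI - \int_s^t \tau\,\nabla_x E(\tau,X)\,\nabla_v X(\tau,s,x,v)\,d\tau.$$
Proposition \ref{geoflow} gives $|\nabla_x X|\les 1$ and $|\nabla_v X|\les 1+(\tau-s)\les \tau$, while $|\nabla_xE(\tau)|\les\eps_0^2\jbrac{\tau}^{-3}\ln\jbrac{\tau}$. In the $x$ derivative the integrand is then $\les \eps_0^2\jbrac{\tau}^{-2}\ln\jbrac{\tau}$, which is integrable, so $|\nabla_x Y|\les 1\les\ln^2\jbrac{t}$. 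In the $v$ derivative the integrand is $\les \eps_0^2\jbrac{\tau}^{-1}\ln\jbrac{\tau}$, whose integral is $\les\eps_0^2\ln^2\jbrac{t}$. Combined with the $-sI$ term, this yields $|\nabla_vY|\les s+\eps_0^2\ln^2\jbrac{t}$.

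The only delicate point is the $v$-derivative. Here the linear growth of $\nabla_vX$ (as opposed to $(\tau-s)$) is exactly compensated by the $\tau^{-3}\ln\tau$ decay, producing the $\ln^2$ loss. One must also use Proposition \ref{geoflow} in the form $|\nabla_vX(\tau,s)|\le |\tau-s|(1+C\eps)$, which gives $|\nabla_v X|\les\tau$ uniformly. Everything else is direct integration. If one prefers not to track the support of $x-sv$, the first bound also holds with $\jbrac{s}$ replaced by $\ln\jbrac{t}$ thanks to Lemma \ref{gcharbds}, which is what the statement requires.
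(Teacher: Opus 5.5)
There is a gap in your $\nabla_x Y$ step. The Duhamel identity for $Y$ and the bound $|Y|\les\ln\jbrac{t}$ are fine. That bound needs $|x-sv|\les\ln\jbrac{s}$ from Lemma \ref{gcharbds}; the $|x-sv|\les 1+s$ you first wrote would not suffice. Your $\nabla_v Y$ bound is also fine: bounding $|\nabla_v X(\tau,s,x,v)|\les\tau$ directly is equivalent to the paper's splitting $\nabla_v X=\nabla_v Y+\tau\nabla_v V$ with $|\nabla_v V|\les 1$ and Gronwall.

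The problem is that you use $|\nabla_x X(\tau,s,x,v)|\les 1$ along the \emph{forward} flow $\tau\geq s$. Proposition \ref{geoflow} is stated ``for all $s,t$'', but this particular bound only holds for backward characteristics. Its Gronwall step needs the weight $(\tau-s)\jbrac{\tau}^{-3}\ln\jbrac{\tau}$ to be integrable on $[s,t]$ with $s<t$. Forward in time, $\nabla_x X(\tau)=I+\int_s^\tau(\tau-\sigma)\nabla_x E(\sigma,X(\sigma))\nabla_x X(\sigma)\,d\sigma$, and the factor $\tau-\sigma$ is unbounded as $\tau\to\infty$.

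The bound is in fact false. $\nabla_x V(\tau,s,x,v)$ converges to $\nabla_x V_\infty(s,x,v)$, which is generically a nonzero matrix. For large $s$, Proposition \ref{Eerrest} suggests $\nabla_x V_\infty(s,x,v)\approx\tfrac12 s^{-2}\nabla E_\infty(v)$, and $\nabla\cdot E_\infty=|\phi_\infty|^2$. Hence $\nabla_x X(\tau,s)=I+\int_s^\tau\nabla_x V\,d\sigma$ grows linearly in $\tau$: particles with equal velocities but different positions acquire different asymptotic velocities and separate linearly. All you can say is $|\nabla_x X(\tau,s)|\leq 1+C\eps(\tau-s)$. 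Your integrand is then only $\les\eps_0^2\jbrac{\tau}^{-2}\ln\jbrac{\tau}+\eps\eps_0^2\jbrac{\tau}^{-1}\ln\jbrac{\tau}$, whose integral is of size $\ln^2\jbrac{t}$, not $O(1)$.

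Getting $|\nabla_x Y|\les 1$, strictly better than the stated $\ln^2\jbrac{t}$, should have been a warning sign. The paper avoids the issue by writing $\nabla_x X=\nabla_x Y+\tau\nabla_x V$. The forward bound $|\nabla_x V(\tau,s)|\les\eps$ does hold, by a continuity argument using $\int_s^\infty\jbrac{\sigma}^{-3}\ln\jbrac{\sigma}(1+\eps\sigma)\,d\sigma<\infty$. The term $\tau^2\|\nabla_x E(\tau)\|_{L^\infty}|\nabla_x V|$ then integrates to $\eps_0^2\ln^2\jbrac{t}$. The remaining term $\tau\|\nabla_x E(\tau)\|_{L^\infty}|\nabla_x Y|$ has an integrable kernel, so Gronwall closes. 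The $\ln^2\jbrac{t}$ in the statement is exactly the contribution of $\tau\nabla_x V$, i.e.\ of the linear growth your argument discarded.
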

\begin{proof}
    First, the characteristic ODEs yield
    $$Y(t,s,x,v) = x-vs - \int_s^t \tau E(\tau,X(\tau,s,x,v))d\tau.$$
    Next, recall from Lemma \ref{gcharbds} that $\sup\{|x-vs| : (x,v)\in\mathcal{S}_s\}\les \ln\jbrac{s}$. Using the decay of $E$, this implies that, for $(x,v)\in \mathcal{S}_s$,
    \begin{align*}
        |Y(t,s,x,v)| \leq |x-vs| + \int_0^t \tau |E(\tau,X(\tau,t,x,v))| d\tau \les \ln\jbrac{s} + \ln\jbrac{t} \les \ln\jbrac{t}
    \end{align*}
    For the derivatives, we have
    \begin{align*}
        \nabla_x Y(t) = I-\int_s^t \tau\nabla_x E(\tau,X(\tau))\nabla_x(Y(\tau)+\tau V(\tau))d\tau\\
        \nabla_v Y(t) = -sI-\int_s^t \tau\nabla_x E(\tau,X(\tau))\nabla_v(Y(\tau)+\tau V(\tau))d\tau.
    \end{align*}
    Therefore, Gronwall, Lemma \ref{geoflow}, and the decay of $\nabla_x E$ yields
    \begin{align*}
        |\nabla_x Y(t)| &\les 1 + \int_s^t \tau^2 \|\nabla_x E(\tau)\|_{L^\infty}|\nabla_x V(\tau)| d\tau + \int_s^t \tau \|\nabla_x E(\tau)\|_{L^\infty} |\nabla_x Y(\tau)| d\tau\\
        &\les 1+\eps_0^2\ln^2\jbrac{t} \les \ln^2\jbrac{t}.
    \end{align*}
    In a nearly identical fashion, we get
    \begin{align*}
        |\nabla_v Y(t)| &\les s + \int_s^t \tau^2 \|\nabla_x E(\tau)\|_{L^\infty}|\nabla_v V(\tau)|d\tau + \int_s^t \tau \|\nabla_x E(\tau)\|_{L^\infty} |\nabla_v Y(\tau)| d\tau\\
        &\les s+\eps_0^2\ln^2\jbrac{t}.
    \end{align*}
\end{proof}

Now we can construct the spatial limit, which will complete the proof of (\ref{Aasymp}).

\begin{prop}\label{Xlim}
    The function 
    $$Z_\infty(s,x,v) := \lim_{t\to\infty} Z(t,s,x,v) = x-vs + \ln(s)E_\infty(v) + \int_s^\infty \frac{d}{d\tau}Z(\tau,s,x,v)d\tau.$$
    
    is well-defined in $C^1_{x,v}$, and the following hold for any $s\geq 1$ uniformly in $(x,v)\in \mathcal{S}_s$.\\
    
    (i) For any $\delta<\min(\frac13 s_{gap},\frac18)$, there holds
    \begin{align*}
        |Z_\infty(s,x,v)-Z(t,s,x,v)|&\les \eps_0^2 t^{-\delta},\\
        |\nabla_xZ_\infty(s,x,v)-\nabla_x Z(t,s,x,v)|&\les \eps_0^2 t^{-\delta},\\
        |\nabla_v Z_\infty(s,x,v)-\nabla_v Z(t,s,x,v)|&\les \eps_0^2 t^{-\delta}\jbrac{s}.
    \end{align*}
    (ii) For any $s>0$, the map $\mathcal{A}_{\infty,s}: \mathcal{S}_s\to \mathcal{A}_{\infty,s}(\mathcal{S}_s)$ defined by $$\mathcal{A}_{\infty,s}(x,v) := (Z_\infty(s,x,v),V_\infty(s,x,v))$$ is a $C^1_{x,v}$ volume-preserving diffeomorphism.
\end{prop}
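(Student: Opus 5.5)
We begin by computing the time derivative of $Z(t,s,x,v)=X(t)-tV(t)+\ln(t)E_\infty(V(t))$, abbreviating $X(t)=X(t,s,x,v)$ and similarly for $V$. Since $\dot X=V$ and $\dot V=E(t,X(t))$,
\begin{equation*}
\frac{d}{dt}Z = -t\,E(t,X(t)) + t^{-1}E_\infty(V(t)) + \ln(t)\,\nabla E_\infty(V(t))\,E(t,X(t)).
\end{equation*}
The last term is $O(\eps_0^4 t^{-2}\ln t)$ by $\|E_\infty\|_{C^1}\les\eps_0^2$ and Theorem \ref{decay}, so it is integrable. For the first two terms we write $X(t)=Y(t)+tV(t)$ with $|Y|\les\ln\jbrac t$ (Lemma \ref{Ybds}). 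Proposition \ref{Eerrest}, evaluated at the point $(Y(t),V(t))$, then gives
\begin{equation*}
|t E(t,X(t))-t^{-1}E_\infty(V(t))|\les \eps_0^2\jbrac{Y}^{2\delta}t^{-1-\delta}\les \eps_0^2 t^{-1-\delta}\ln^{2\delta}\jbrac t.
\end{equation*}
After absorbing the logarithm using the openness of the condition on $\delta$, $\frac{d}{dt}Z$ is integrable. This shows $Z_\infty$ exists and proves the first bound in (i). The initial value $Z(s)=x-vs+\ln(s)E_\infty(v)$ matches the stated formula.

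For the derivative bounds we differentiate $\frac{d}{dt}Z$ in $x$ or $v$. The main obstacle appears here: differentiating $t^{-1}E_\infty(V(t))$ and $\ln(t)\nabla E_\infty(V(t))E$ would naively require $\nabla^2E_\infty$, which does not exist since $E_\infty$ is only $C^{1,\delta}$. Following the idea from the introduction, we do not differentiate $\ln(t)\nabla E_\infty(V(t))E$ directly. Instead we regroup: $Z = X-tV+\ln(t)E_\infty(V_\infty)+\ln(t)\bigl(E_\infty(V(t))-E_\infty(V_\infty)\bigr)$, where $V_\infty=V_\infty(s,x,v)$ is frozen. The last term tends to $0$ in $C^1$. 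Indeed, its gradient is
\begin{equation*}
\ln t\bigl[\nabla E_\infty(V(t))\nabla V(t)-\nabla E_\infty(V_\infty)\nabla V_\infty\bigr],
\end{equation*}
which by the $C^\delta$ bound on $\nabla E_\infty$ and Proposition \ref{Vlim} is $O\bigl(\eps_0^2\ln t\,(t^{-\delta}+t^{-1}\ln t)\jbrac{\cdot}\bigr)$, decaying after absorbing logs. The $v$-derivative carries the factor $|\nabla_v V|\les1$.

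It then remains to show that $\tilde Z(t):=X-tV+\ln(t)E_\infty(V_\infty)$ converges in $C^1$. Its time derivative is $-tE(t,X(t))+t^{-1}E_\infty(V_\infty)$. Differentiating in $x$ (or $v$) gives
\begin{equation*}
-t\,\nabla_xE(t,X)\bigl(\nabla Y+t\nabla V\bigr)+t^{-1}\nabla E_\infty(V_\infty)\nabla V_\infty .
\end{equation*}
We split $t^2\nabla_xE(t,X)\nabla V=t^{-1}\nabla E_\infty(V(t))\nabla V+O(\eps_0^2\jbrac{Y}^{2\delta}t^{-1-\delta})$ using the second estimate of Proposition \ref{Eerrest}. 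Then $t^{-1}[\nabla E_\infty(V(t))\nabla V(t)-\nabla E_\infty(V_\infty)\nabla V_\infty]=O(t^{-1-\delta})$ by Hölder continuity of $\nabla E_\infty$ together with Proposition \ref{Vlim}. The remaining term $t\nabla_xE\,\nabla Y$ is $O(\eps_0^2t^{-2}\ln t\,(\ln^2\jbrac t+s))$ by Lemma \ref{Ybds}; it is integrable, and the factor $s$ is what produces the $\jbrac s$ in the $\nabla_v$ bound. Integrating from $t$ to $\infty$ gives (i), and shows $Z_\infty\in C^1$ as a uniform $C^1$ limit.

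For (ii), $\mathcal{A}_{t,s}=\mathcal{M}_t^{-1}\circ\Phi_{t,s}$ is a composition of measure-preserving maps. The map $\mathcal{M}_t^{-1}$ is a shear in $x$ depending only on $v$, so it has Jacobian $1$. Hence $\det\nabla\mathcal{A}_{t,s}=1$, and by the $C^1$ convergence from (i) and Proposition \ref{Vlim}, $\det\nabla\mathcal{A}_{\infty,s}=1$ on $\mathcal{S}_s$. Injectivity comes from passing to the limit in $\mathcal A_{t,s}$. The cleaner route is to use Proposition \ref{Vlim} together with the remark after it: $v\mapsto V_\infty$ is bi-Lipschitz for $s\gg1$ or $\eps$ small, and $x\mapsto Z_\infty$ has $\nabla_xZ_\infty$ close to $I$, since $\nabla_x X\approx I$ and the remaining corrections are small. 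A triangular-type argument then gives a global bi-Lipschitz bound. Together with the Jacobian bound and the inverse function theorem, this yields a $C^1$ diffeomorphism onto its image. For general $s$ we reduce to large $s'$ via the identity $\mathcal{A}_{\infty,s}=\mathcal{A}_{\infty,s'}\circ\Phi_{s',s}$, which follows from the flow property.
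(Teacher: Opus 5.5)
Your part (i) follows the paper's proof step for step. You use the same formula for $\frac{d}{dt}Z$, Proposition~\ref{Eerrest} evaluated at $(Y(t),V(t))$ together with Lemma~\ref{Ybds}, and the same frozen quantity $\tilde Z=Y+\ln(t)E_\infty(V_\infty)$ to avoid $\nabla^2E_\infty$, with $\nabla_vY$ producing the $\jbrac{s}$.

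Part (ii) takes a genuinely different route. The paper does three separate things:
\begin{itemize}
\item it uses the Schur complement only to get $|\det\mathcal{D}\mathcal{A}_{\infty,s}-1|\les\eps_0^2s^{-\delta}$, which gives local injectivity;
\item it obtains global injectivity from the Brouwer-type Lemma~\ref{toplem}, applied to the uniform limit of the injective maps $\mathcal{A}_{t,s}$;
\item it proves volume preservation by passing to the limit in $\iint\varphi\circ\mathcal{A}_{t,s}$.
\end{itemize}
You instead note that $\det\nabla\mathcal{A}_{t,s}\equiv1$, by Liouville for $\Phi_{t,s}$ and because $\mathcal{M}_t^{-1}$ is a shear. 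The $C^1$ convergence from (i) and Proposition~\ref{Vlim} then gives $\det\nabla\mathcal{A}_{\infty,s}=1$ exactly. Once injectivity is known, volume preservation is just the change-of-variables formula, so your version is sharper and shorter at that step. Your reduction $\mathcal{A}_{\infty,s}=\mathcal{A}_{\infty,s'}\circ\Phi_{s',s}$ is also a useful addition. It covers the range $0<s<1$ asserted in (ii), which the paper's estimates (written for $s\geq1$) do not explicitly address.

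Your injectivity argument is quantitative rather than topological, which avoids invariance of domain and Brouwer. It needs a bit more than you wrote:
\begin{itemize}
\item The sentence ``injectivity comes from passing to the limit'' is not an argument by itself. Uniform limits of injective maps need not be injective; that is exactly what Lemma~\ref{toplem} plus local injectivity repairs. Rely only on your second route.
\item In the ``triangular'' argument the block $\nabla_vZ_\infty\approx-sI$ is large. The argument closes only because $|\nabla_vZ_\infty|\,|\nabla_xV_\infty|\les\eps\jbrac{s}^{-1}\ln\jbrac{s}\ll1$. In effect it is a Schur-complement bound on the Jacobian averaged along the segment joining two points, and you should say so.
\item Take that segment to be straight and inside a convex set where the bounds of (i) hold, e.g.\ $\{|x-vs|\les\ln\jbrac{s}\}\supset\mathcal{S}_s$. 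Lemma~\ref{Ybds} uses nothing more than this. Coordinate paths through mixed points such as $(x_2,v_1)$ can leave that set, where $\jbrac{Y}^{2\delta}$ is no longer controlled.
\item The closeness $|\nabla_xZ_\infty-I|\les\eps_0^2s^{-\delta}$ should come from your derivative bound in (i) at $t=s$, where $\nabla_xZ(s,s,x,v)=I$. Do not justify it via ``$\nabla_xX\approx I$'': the term $t\nabla_xV(t)$ inside $\nabla_xZ(t)$ is not small termwise.
\end{itemize}
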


\begin{proof}
The proof involves several steps, with the key tools being the convergence of the velocity characteristics (Proposition \ref{Vlim}) and the asymptotics for $E$ along the trajectories of free transport (Proposition \ref{Eerrest}). Using fairly straightforward arguments, we can show the first claim. The second claim requires the convergence of derivatives, which requires a somewhat delicate ``freezing" argument in the long-range correction term to avoid derivative loss. The convergence estimates for derivatives will imply that $Z_\infty(s)$ is $C^1$ and injective for every $s$.\\

\textbf{a. Uniform convergence.} For the first claim of the proposition, we compute 
    $$\frac{d}{d\tau}Z(\tau,s,x,v)=-\tau E\big(Y(\tau)+\tau V(\tau)\big) + \tau^{-1}E_\infty(V(\tau)) + \ln(\tau) \nabla E_\infty(V(\tau)) E(\tau,X(\tau)).$$
    Then, using Proposition \ref{Eerrest}, Lemma \ref{Ybds}, and the decay of $E$, we obtain
    \begin{align*}
        \big|\tfrac{d}{d\tau}Z(\tau,s,x,v)\big| &\les \tau \big( \eps_0^2 \jbrac{Y(\tau)}^{2\delta}\tau^{-2-\delta} \big) + \ln(\tau) \|\nabla E_\infty\|_{L^{\infty}} \eps_0^2 \jbrac{\tau}^{-2}\\
        &\les \eps_0^2 \tau^{-1-\delta}\ln^{2\delta}\jbrac{\tau}.
    \end{align*}
    Again we may use the fact that the condition on $\delta$ is open in order to absorb the logarithmic factor in $\tau$. Using the fundamental theorem of calculus, we get the desired convergence of $Z(s)$ to $Z_\infty$:
    \begin{equation}\label{C0Zconv}
        |Z(t,s,x,v)-Z_\infty(s,x,v)| = \left| \int_t^\infty \frac{d}{d\tau}Z(\tau,s,x,v)d\tau \right| \les \eps_0^2 t^{-\delta}.
    \end{equation}

    \textbf{b. Convergence of derivatives.} In order to prove that the asymptotic flow is $C^1$ we need to prove convergence estimates for derivatives of $Z$. If we tried to repeat the above argument for derivatives, we would apparently require $E_\infty\in W^{2,\infty}$, but we only have $E_\infty\in C^{1,\delta}$ so we need a different approach. To avoid putting another derivative on $E_\infty$, we remove the $t$-dependence in the argument of $E_\infty$ by ``freezing" the velocity at infinity, i.e. replacing the velocity characteristic by the asymptotic velocity:
    $$\tilde{Z}(t,s,x,v) := Y(t,s,x,v) + \ln(t)E_\infty(V_\infty(s,x,v)).$$
    First, we show that $\tilde{Z}$ converges in $W^{1,\infty}$ to a limit $\tilde{Z}_\infty$, defined in the natural way as
    $$\tilde{Z}_\infty(s,x,v) := x-vs + \ln(s)E_\infty(V_\infty(s,x,v)) + \int_s^\infty \frac{d}{d\tau} \tilde{Z}(\tau,s,x,v)d\tau.$$
    Using Propositions \ref{Eerrest} and \ref{Vlim} and Lemma \ref{Ybds}, we get
    \begin{align*}
        \big|\tfrac{d}{d\tau}\tilde{Z}(\tau,s,x,v)\big|&=|-\tau E(\tau,Y(\tau)+\tau V(\tau))+\tau^{-1}E_\infty(V_\infty)|\\
        &\leq \tau|E(\tau,Y(\tau)+\tau V(\tau))-\tau^{-2}E_\infty(V(\tau))|+\tau^{-1}|E_\infty(V(\tau))-E_\infty(V_\infty)|\\
        &\les \eps_0^2 \tau^{-1-\delta}\jbrac{Y(\tau)}^{2\delta}  + \tau^{-1}[E_\infty]_{C^\delta}|V(\tau)-V_\infty|^{\delta}\\
        &\les \eps_0^2 \tau^{-1-\delta}\ln^{2\delta}\jbrac\tau + \eps_0^2\jbrac{\tau}^{-1-\delta}.
    \end{align*} 
    Just as before, we aborb the logarithmic factor and use the fundamental theorem of calculus, implying that 
    $$|\tilde{Z}(t,s,x,v)-\tilde{Z}_\infty(s,x,v)| \les \eps_0^2 t^{-\delta}. $$
    Now we turn to convergence of the derivatives of $\tilde{Z}$. To simplify the notation we write $V_\infty = V_\infty(s,x,v)$ and $V = V(\tau,s,x,v)$ in the following estimates. Using the same estimates used to prove the convergence of $\tilde{Z}$ (namely, Propositions \ref{Eerrest} and \ref{Vlim}) plus the decay of $\nabla_x E$ and Lemmas \ref{geoflow} and \ref{Ybds}, we obtain
    \begin{align*}
        \big|\tfrac{d}{d\tau}\nabla_x\tilde{Z}(\tau,s,x,v)\big| &= \big|-\tau (\nabla_x E)(\tau,Y+\tau V)\nabla_x( Y+\tau V) + \tau^{-1}\nabla E_\infty(V_\infty)\nabla_x V_\infty \big|\\
        &\leq \tau^2|(\nabla_x E)(\tau,Y+\tau V) - \tau^{-3}\nabla E_\infty(V)||\nabla_x V| \\
        &\quad+ \tau^{-1}|\nabla E_\infty(V) - \nabla E_\infty(V_\infty)||\nabla_x V|+ \tau^{-1} |\nabla E_\infty(V_\infty)||\nabla_x(V-V_\infty)| \\
        &\quad+ \tau|(\nabla_x E)(\tau,Y+\tau V)||\nabla_x Y| \\
        &\les \eps_0^2\tau^{-1-\delta}\jbrac{Y}^{2\delta} + \eps_0^2\tau^{-1}[\nabla E_\infty]_{C^\delta}|V_\infty-V|^\delta + \eps_0^2\tau^{-3}\ln\jbrac\tau + \eps_0^2\tau^{-2}\ln^3\jbrac{\tau}\\
        &\les\eps_0^2 \tau^{-1-\delta}\ln^{2\delta}\jbrac{\tau}.
    \end{align*}
    For $v$ derivatives, a nearly identical argument yields
    $$\big|\tfrac{d}{d\tau}\nabla_v\tilde{Z}(\tau,s,x,v)\big| \les \eps_0^2 \tau^{-1-\delta}\jbrac{s}.$$
    Therefore, again absorbing the logarithmic factors,
    \begin{align*}
        |\nabla_x\tilde{Z}_\infty(s,x,v) - \nabla_x Z(t,s,x,v)| &\les \eps_0^2 t^{-\delta},\\
        |\nabla_v\tilde{Z}_\infty(s,x,v) - \nabla_v Z(t,s,x,v)| &\les \eps_0^2 t^{-\delta}\jbrac{s}.
    \end{align*}    
    
    Next, we will show that $\tilde{Z}(t)$ gets asymptotically close to $Z(t)$ as $t\to\infty$ in $W^{1,\infty}_{x,v}$. First, using Proposition \ref{Vlim} we get
    \begin{align*}
        |\tilde{Z}(t,s,x,v)-Z(t,s,x,v)| \leq \ln(t)\|\nabla E_\infty\|_{L^\infty}|V(t)-V_\infty| \les \eps_0^2 \jbrac{t}^{-1}\ln(t).
    \end{align*}
    Therefore, since $Z(s)\to Z_\infty$ and $\tilde{Z}(s)\to \tilde{Z}_\infty$ in $L^\infty$, we must have $Z_\infty = \tilde{Z}_\infty$, since
    $$|Z_\infty-\tilde{Z}_\infty|\leq |Z_\infty-Z(s)|+|Z(s)-\tilde{Z}(s)|+|\tilde{Z}(s)-\tilde{Z}_\infty| \to 0 \quad \text{ as }s\to \infty.$$
    Crucially, this means that once we prove that $\tilde{Z}(t)$ and $Z(t)$ are close in $W^{1,\infty}_{x,v}$, we will get the $W^{1,\infty}_{x,v}$ convergence of $Z(t)$ from the $W^{1,\infty}_{x,v}$ convergence of $\tilde{Z}(t)$. To this end, using Propositions \ref{geoflow} and \ref{Vlim},
    \begin{align*}
        |\nabla_x \tilde{Z}(t,s,x,v)-\nabla_x Z(t,s,x,v)| &\leq \ln(t)\big(|\nabla E_\infty(V)-\nabla E_\infty(V_\infty)||\nabla_x V| + |\nabla E_\infty(V_\infty)||\nabla_x (V-V_\infty)|\big)\\
        &\les \ln(t)\big( \eps_0[\nabla E_\infty]_{C^\delta}|V-V_\infty|^\delta + \eps_0^2\jbrac{t}^{-2}\ln\jbrac{t} \big)\\
        &\les \eps_0^2\jbrac{t}^{-\delta}\ln(t).
    \end{align*}
    A nearly identical argument for $v$ derivatives gives
    \begin{align*}
        |\nabla_v \tilde{Z}(t,s,x,v)-\nabla_v Z(t,s,x,v)| \les \eps_0^2 \jbrac{t}^{-\delta}\ln(t).
    \end{align*}
    Using the fact that $Z_\infty=\tilde{Z}_\infty$ and the convergence estimates just proved,  
    \begin{align*}
        |\nabla_x Z(t,s,x,v)-\nabla_x Z_\infty(s,x,v)| &\leq |\nabla_x Z(t) - \nabla_x \tilde{Z}(t)| + |\nabla_x \tilde{Z}(t)-\nabla_x \tilde{Z}_\infty|\\
        &\les \eps_0^2\jbrac{t}^{-\delta}\ln(t) + \eps_0^2 t^{-\delta}
    \end{align*}
    Similarly, again absorbing the log factor since the condition on $\delta$ is open,
    \begin{align*}
        |\nabla_v Z(t,s,x,v)-\nabla_v Z_\infty(s,x,v)| &\leq |\nabla_v Z(t) - \nabla_v \tilde{Z}(t)| + |\nabla_v \tilde{Z}(t)-\nabla_v \tilde{Z}_\infty|\\
        &\les \eps_0\jbrac{t}^{-\delta}\ln(t) + \eps_0 t^{-\delta}\jbrac{s}\\
        &\les \eps_0^2 t^{-\delta}\jbrac{s}.
    \end{align*}
    Setting $t=s$ in the above, we see that 
    \begin{align}
        |\nabla_x Z_\infty(s,x,v)-I| &\les \eps_0^2 s^{-\delta}, \label{C1Zconvx}\\
        |\nabla_v Z_\infty(s,x,v)+sI-\ln(s)\nabla E_\infty(v)|&\les \eps_0^2 \jbrac{s}^{1-\delta}.\label{C1Zconvv}
    \end{align}
    In particular, the second line implies
    \begin{equation}\label{Zinfvbd}
        |\nabla_v Z_\infty(s,x,v)|\les \jbrac{s}.
    \end{equation}
    
    \textbf{c. Bounds on the Jacobian.} For ease of notation let $\mathcal{D}=\nabla_{x,v}$. Following \cite{pankavich_asymptotic_2022}, write
    $$\mathcal{D}\mathcal{A}_{\infty,s} = \begin{bmatrix} \nabla_x Z_\infty(s) & \nabla_x V_\infty(s)\\ \nabla_v Z_\infty(s) & \nabla_v V_\infty(s) \end{bmatrix} =: \begin{bmatrix} A(s) & B(s) \\ C(s) & D(s)\end{bmatrix}.$$
    
    Using the Schur complement, we have
    $$\det \mathcal{D}\mathcal{A}_{\infty,s} = \det(D) \det(A-BD^{-1}C).$$
    From Proposition \ref{Vlim}, (\ref{C1Zconvx}), and (\ref{Zinfvbd}), we have 
    $$|\det D - 1|\les \eps_0^2\jbrac{s}^{-1}\ln\jbrac{s}, \quad |A-I|\les \eps_0^2 s^{-\delta}, \quad |BD^{-1}C|\les \eps_0^2\jbrac{s}^{-1}\ln\jbrac{s},$$
    which implies that, since $M\mapsto \det M$ is Lipschitz, that $|\det \mathcal{D} \mathcal{A}_{\infty,s}|\geq \frac12$ if $\eps_0$ is chosen small enough, and moreover 
    \begin{equation}\label{detconv}
        |\det \mathcal{D}\mathcal{A}_{\infty,s}(x,v)-1|\les \eps_0^2 s^{-\delta},
    \end{equation}
    uniformly for $(x,v)\in \mathcal{S}_s$.\\
    
    \textbf{d. Injectivity and volume-preserving property.} By the inverse function theorem, the lower bound on the Jacobian is enough to give local injectivity, but to obtain global injectivity, we need the following topological lemma. It appears standard but we include a proof since we were unable to find it in the literature.
    \begin{lem}\label{toplem}
        Suppose $f_n:\R^d\to \R^d$ are injective continuous functions converging uniformly to a locally injective function $f$. Then $f$ is injective.
    \end{lem}
    \begin{proof}
        Suppose there are $x_1\neq x_2$ with $f(x_1)=f(x_2)=y$. Choose $\eps>0$ small enough so that $B_i := B(x_i,\eps)$ are disjoint and $f|_{B_i}$ are injective, and hence are open maps by invariance of domain. Let $V$ be an open ball centered at $y$ whose closure is contained in $f|_{B_1}(B_1)\cap f|_{B_2}(B_2)$. For $x\in \overline{V}$ and $i\in\{1,2\}$ define $\varphi_{n,i}(x) = y+x-f_n(f|_{B_i}^{-1}(x))$. Since $f_n\to f$ uniformly, $\varphi_{n,i}\to y$ uniformly on $\overline{V}$, so by choosing $n$ large, $\varphi_{n,i}$ is a map from $\overline{V}$ to itself and hence has a fixed point $p_i$ by Brouwer's theorem. This implies $f_n(f|_{B_i}^{-1}(p_i)) = y$ for $i\in\{1,2\}$, which contradicts the injectivity of $f_n$ since the $B_i$ are disjoint.
    \end{proof}

    Since $\mathcal{A}_{t,s}$ is injective and converges uniformly to $\mathcal{A}_{\infty,s}$ on $\mathcal{S}_s$ as $t\to\infty$ by (\ref{C0Zconv}) and Proposition \ref{Vlim}, the Lemma yields the global injectivity of $Z_\infty$. The inverse function theorem also shows that the inverse is $C^1$, concluding the proof that $\mathcal{A}_{\infty,s}$ is a diffeomorphism from $\mathcal{S}_s$ onto its image. \\

    Finally, we show that $\mathcal{A}_{\infty,s}$ is volume-preserving. If $\varphi\in C_c(\R^6)$, then using the uniform convergence of $\mathcal{A}_{t,s}\to\mathcal{A}_{\infty,s}$ and the fact that $\mathcal{A}_{t,s}$ is volume preserving, we get
    $$\iint \varphi(\mathcal{A}_{\infty,s}(x,v))dxdv=\lim_{t\to\infty}\iint \varphi(\mathcal{A}_{t,s}(x,v))dxdv=\iint \varphi(x,v)dxdv$$
    Approximating indicator functions of sets in $\mathcal{S}_s$ by continuous compactly supported functions in the $L^1_{x,v}$ topology yields the volume-preserving property.
\end{proof}

\subsection{Asymptotic Vlasov profile}

In this section, we show the existence of a profile $f_\infty$ such that $f$ converges in $L^p$ along the modified characteristics to $f_\infty$. This will prove (\ref{fasymp}).

\begin{prop}\label{profileconvprop}
    There exists $f_\infty\in L^\infty$ with compact support such that 
    $$\lim_{t\to\infty}\|f(t)\circ \mathcal{M}_t - f_\infty\|_{L^p_{x,v}}=0, \quad 1\leq p<\infty.$$
\end{prop}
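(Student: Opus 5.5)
Fix a time $s\geq 1$ (say $s=1$) and define $f_\infty := f(s)\circ\mathcal{A}_{\infty,s}^{-1}$ on $\mathcal{A}_{\infty,s}(\mathcal{S}_s)$, extended by zero. Since $\mathcal{A}_{\infty,s}$ is a measure-preserving $C^1$ diffeomorphism by Proposition \ref{Xlim}, we have $\|f_\infty\|_{L^p}=\|f(s)\|_{L^p}$ for all $p$. Compact support follows from the uniform bounds on $Z_\infty,V_\infty$ on the compact set $\mathcal{S}_s$. Because $f$ is transported, $f(t)=f(s)\circ\Phi_{s,t}$, so
\[
f(t)\circ\mathcal{M}_t=f(s)\circ\Phi_{s,t}\circ\mathcal{M}_t=f(s)\circ\mathcal{A}_{t,s}^{-1}.
\]
Here $\mathcal{A}_{t,s}=\mathcal{M}_t^{-1}\circ\Phi_{t,s}$ is also measure preserving, since $\mathcal{M}_t$ is a shear in $x$ depending only on $v$. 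Hence $\|f(t)\circ\mathcal{M}_t\|_{L^p}=\|f(s)\|_{L^p}=\|f_\infty\|_{L^p}$ for every $t$. Since everything is uniformly bounded in $L^\infty$ and supported in a fixed compact set (the supports $\mathcal{A}_{t,s}(\mathcal{S}_s)$ stay bounded by the uniform convergence in (\ref{Aasymp})), interpolation shows it suffices to prove convergence in $L^2$.

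The proof uses a density argument. First assume $f(s)$ is replaced by some $h\in C_c(\R^6)$ with $\|h-f(s)\|_{L^2}<\eta$. Then measure preservation of both maps gives
\[
\|f(s)\circ\mathcal{A}_{t,s}^{-1}-h\circ\mathcal{A}_{t,s}^{-1}\|_{L^2}<\eta,\qquad \|f(s)\circ\mathcal{A}_{\infty,s}^{-1}-h\circ\mathcal{A}_{\infty,s}^{-1}\|_{L^2}<\eta.
\]
It therefore remains to show $h\circ\mathcal{A}_{t,s}^{-1}\to h\circ\mathcal{A}_{\infty,s}^{-1}$ in $L^2$. For continuous $h$, we avoid inverting the maps and instead test weakly, then use the norm identity. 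For $\varphi\in C_c(\R^6)$, a change of variables gives
\[
\iint \varphi\,(h\circ\mathcal{A}_{t,s}^{-1})\,dxdv=\iint (\varphi\circ\mathcal{A}_{t,s})\,h\,dxdv\to\iint(\varphi\circ\mathcal{A}_{\infty,s})\,h=\iint\varphi\,(h\circ\mathcal{A}_{\infty,s}^{-1}),
\]
by the uniform convergence $\mathcal{A}_{t,s}\to\mathcal{A}_{\infty,s}$ on the compact set $\operatorname{supp}h$ (we may take $\operatorname{supp}h$ inside a neighborhood of $\mathcal{S}_s$ where the convergence (\ref{Aasymp}) holds) and the uniform continuity of $\varphi$. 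Combined with the uniform $L^2$ bound, this gives weak $L^2$ convergence. Since the $L^2$ norms are equal, $\|h\circ\mathcal{A}_{t,s}^{-1}\|_{L^2}=\|h\|_{L^2}=\|h\circ\mathcal{A}_{\infty,s}^{-1}\|_{L^2}$, and weak convergence plus convergence of norms gives strong convergence in $L^2$. In fact this argument applies directly to $f(s)$ itself: test against $\varphi\in C_c$, use $\iint(\varphi\circ\mathcal{A}_{t,s})f(s)\to\iint(\varphi\circ\mathcal{A}_{\infty,s})f(s)$ by dominated convergence, then invoke equality of norms. So the density step may be unnecessary, but I would keep it as a fallback.

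The main obstacle I expect is a domain subtlety. Proposition \ref{Xlim} gives $\mathcal{A}_{\infty,s}$ and the convergence estimates only on $\mathcal{S}_s$, not globally, so the change of variables and the convergence $\varphi\circ\mathcal{A}_{t,s}\to\varphi\circ\mathcal{A}_{\infty,s}$ must be carried out with integrands supported in $\mathcal{S}_s$. This is fine because $f(s)$ vanishes off $\mathcal{S}_s$, and the weak formulation only needs convergence on that set. Care is also needed that the measure-preservation identity of $\mathcal{A}_{\infty,s}$, proved via test functions, yields $\|f_\infty\|_{L^2}=\|f(s)\|_{L^2}$ for the discontinuous $f(s)$; this follows by approximating in $L^1$, as at the end of the proof of Proposition \ref{Xlim}. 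Finally, $L^p$ convergence for $1\leq p<2$ follows from the uniformly bounded supports and Hölder's inequality, and for $2<p<\infty$ from interpolation with the uniform $L^\infty$ bound. Independence of $f_\infty$ from $s$ follows from $\mathcal{A}_{\infty,s}=\mathcal{A}_{\infty,s'}\circ\Phi_{s',s}$.
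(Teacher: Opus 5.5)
Your proposal is correct and follows essentially the paper's route. You test against $\varphi\in C_c$ via $\langle \varphi\circ\mathcal{A}_{t,s}, f(s)\rangle$ and pass to the limit using the convergence of $\mathcal{A}_{t,s}$ on $\mathcal{S}_s$. You then upgrade to weak $L^2$ convergence by density and the uniform $L^2$ bound, and to strong $L^2$ convergence via the norm equality coming from measure preservation. Finally you get $L^p$ convergence from the common compact support and interpolation.

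As you yourself note, approximating $f(s)$ first by a continuous $h$ is superfluous. It would also require $\mathcal{A}_{\infty,s}$ to be defined, injective and measure-preserving on a neighborhood of $\mathcal{S}_s$. The direct argument applied to $f(s)$ is exactly what the paper does, so you can drop that step.
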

\begin{proof}
The proof will proceed in a few steps. First we will show weak convergence against continuous compactly supported functions (i.e. weak convergence in the sense of measures). Then, via a density argument and uniform boundedness along the flow, we will upgrade this to weak $L^2$ convergence, which, using the fact that $L^2$ norms remain invariant under the relevant flows, actually implies strong $L^2$ convergence, from which we may obtain $L^p$ convergence for any finite $p$ by interpolation.\\

Let $\varphi\in C_c(\R^3\times\R^3)$ be a continuous compactly supported function. Then using the invertibility and volume preserving property of $\Phi_{t,s}$ and $\mathcal{M}_t$ (and recalling that $\Phi_{t,s}^{-1}=\Phi_{s,t}$), we get
\begin{align*}
    \jbrac{ \varphi, f(t)\circ \mathcal{M}_t } &= \jbrac{ \varphi\circ\mathcal{M}_t^{-1}, f(t) }\\
    &= \jbrac{ \varphi\circ\mathcal{M}_t^{-1}, f(s)\circ\Phi_{s,t} }\\
    &= \jbrac{ \varphi\circ\mathcal{M}_t^{-1}\circ\Phi_{t,s}, f(s) }\\
    &= \jbrac{ \varphi\circ \mathcal{A}_{t,s},f(s)}.
\end{align*}
Therefore, Propositions \ref{Vlim} and \ref{Xlim}, the continuity of $\varphi$, and the invertibility and volume preserving property of $\mathcal{A}_{\infty,s}$ yield
\begin{equation}\label{profileconv}
    \lim_{t\to\infty}\jbrac{ \varphi, f(t)\circ \mathcal{M}_t } = \jbrac{ \varphi\circ \mathcal{A}_{\infty,s},f(s) \chi_{\mathcal{S}_s}} = \jbrac{ \varphi,f(s) \circ \mathcal{A}_{\infty,s}^{-1} \chi_{\mathcal{A}_{\infty,s}(\mathcal{S}_s)}}.
\end{equation}
We therefore define 
$$f_\infty := f(s)\circ \mathcal{A}_{\infty,s}^{-1}\chi_{\mathcal{A}_{\infty,s}(\mathcal{S}_s)},$$
which is clearly bounded and compactly supported as $\mathcal{A}_{\infty,s}$ is continuous and $\mathcal{S}_s$ is compact. As a consequence of the volume-preserving property of $\mathcal{A}_{\infty,s}$, $\mathcal{M}_t$, and $\Phi_{t,s}$, we have 
\begin{equation}\label{L2pres}
    \|f_\infty\|_{L^2_{x,v}} = \|f(s)\|_{L^2_{x,v}} = \|f_0\|_{L^2_{x,v}} = \|f(t)\circ \mathcal{M}_t\|_{L^2_{x,v}}.
\end{equation}
This is enough to obtain weak $L^2$ convergence: given $\eta>0$ and $h\in L^2(\R^6)$, by density we can find $\varphi\in C_c(\R^6)$ with $\|\varphi-h\|_{L^2_{x,v}}\leq \eta$, so that
\begin{align*}
    |\jbrac{h,f(t)\circ\mathcal{M}_t-f_\infty}| \leq |\jbrac{\varphi,f(t)\circ\mathcal{M}_t-f_\infty}| + 2\|f_0\|_{L^2_{x,v}}\|\varphi-h\|_{L^2_{x,v}}.
\end{align*}
The first term converges to zero in view of (\ref{profileconv}), and the second term can be made as small as we wish by choosing $\eta$ small. This yields weak $L^2$ convergence, but using (\ref{L2pres}), we can upgrade this to strong $L^2$ convergence:
\begin{align*}
    \lim_{t\to\infty}\|f(t)\circ \mathcal{M}_t - f_\infty\|_{L^2}^2 &= \lim_{t\to\infty}\|f(t)\circ \mathcal{M}_t\|_{L^2}^2 + \|f_\infty\|_{L^2}^2 - 2\lim_{t\to\infty}\jbrac{f(t)\circ \mathcal{M}_t, f_\infty}\\
    &= \|f_\infty\|_{L^2}^2 + \|f_\infty\|_{L^2}^2 - 2\|f_\infty\|_{L^2}^2 = 0.
\end{align*}
Now we deal with $L^p$ convergence. Observe that $f(t)\circ \mathcal{M}_t$ is zero outside of $\mathcal{A}_{t,s}(\mathcal{S}_s)$ since $$\mathcal{A}_{t,s}(\mathcal{S}_s)=\mathcal{M}_t^{-1}(\Phi_{t,s}(\mathcal{S}_s)) = \mathcal{M}_t^{-1}(\mathcal{S}_t).$$
Therefore, since $\mathcal{A}_{t,s}\to\mathcal{A}_{\infty,s}$, and $f_\infty$ is supported on the latter, for large enough $t$ the supports of $f(t)\circ \mathcal{M}_t$ and $f_\infty$ are both contained in a common compact set independent of $t$. Therefore, H\"older yields
$$\|f(t)\circ \mathcal{M}_t - f_\infty\|_{L^1}\les \|f(t)\circ \mathcal{M}_t - f_\infty\|_{L^2}\to 0.$$
Interpolation then gives $L^p$ convergence for $1\leq p\leq 2$. Similarly, since 
$$\|f(t)\circ \mathcal{M}_t - f_\infty\|_{L^\infty}\leq 2\|f_0\|_{L^\infty},$$
interpolation also gives $L^p$ convergence for $2\leq p<\infty$.

\end{proof}

\subsection{Improved convergence at higher regularity}

In this section, we prove Corollary \ref{higherreg}. Many of the arguments are similar to ones appearing earlier. As a preliminary, let us first estimate weighted norms of $g$. The proof is very similar to the proofs of Proposition \ref{geoflow} and Lemma \ref{gcharbds}.

\begin{lem}\label{vlasovweightedderiv}
There holds
    $$\|\jbrac{x}\nabla_{x,v}g(t)\|_{L^1_{x,v}\cap L^\infty_{x,v}}\les \|f_0\|_{W^{1,\infty}_{x,v}}\ln^6\jbrac{t}.$$
\end{lem}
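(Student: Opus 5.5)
We will work directly from the characteristic representation (\ref{gcharrep}), $g(t,x,v)=f_0(\mathcal{X}(0,t,x,v),\mathcal{V}(0,t,x,v))$. The chain rule gives
$$\nabla_{x,v}g(t,x,v) = (\nabla f_0)(\mathcal{X}(0),\mathcal{V}(0))\,\nabla_{x,v}(\mathcal{X}(0),\mathcal{V}(0)),$$
so $|\nabla_{x,v}g|\le \|f_0\|_{W^{1,\infty}}\,|\nabla_{x,v}(\mathcal{X},\mathcal{V})(0,t,x,v)|\,\chi_{\{g(t)\neq0\}}$, or more precisely it is supported where $(\mathcal{X}(0),\mathcal{V}(0))$ lies in the support of $f_0$. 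By Lemma \ref{gcharbds}, this support lies in $\{|x|\les \ln\jbrac{t},\ |v|\les 1\}$. Hence the weight $\jbrac{x}$ costs one factor of $\ln\jbrac{t}$, and the $L^1$ norm costs the volume factor $\ln^3\jbrac{t}$. This is exactly as in Lemma \ref{vlasovweighted}, and the whole lemma reduces to a pointwise bound
$$|\nabla_{x,v}(\mathcal{X},\mathcal{V})(0,t,x,v)|\les \ln^2\jbrac{t},$$
since then $1+3+2=6$ logarithms appear in the $L^1$ bound, and fewer in $L^\infty$.

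For the Jacobian bound we use $\mathcal{X}(0,t,x,v)=X(0,t,x+vt,v)$ and $\mathcal{V}(0,t,x,v)=V(0,t,x+vt,v)$. Then $\nabla_x\mathcal{X}=\nabla_xX$ and $\nabla_x\mathcal{V}=\nabla_x V$, which are $O(1)$ by Proposition \ref{geoflow}. In the $v$ direction we have
$$\nabla_v\mathcal{X}=t\nabla_xX+\nabla_vX, \qquad \nabla_v\mathcal{V}=t\nabla_xV+\nabla_vV.$$
The crude bounds of Proposition \ref{geoflow} only give $O(\eps t)$ here, so we need the cancellation between $tI$ and $-tI$. We therefore redo the Gronwall argument of Proposition \ref{geoflow} directly on the profile characteristics. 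Write $\mathcal{X}(s)=X(s,t,x+vt,v)$, $\mathcal{V}(s)$, and set $A(s):=\nabla_v\mathcal{X}(s)-(t-s)\cdot 0$. More precisely, note that $\nabla_v\mathcal{X}(t)=tI$, $\nabla_v\mathcal{V}(t)=I$, and that the free solution has $\nabla_v\mathcal{X}(s)=sI$, $\nabla_v\mathcal{V}(s)=I$. Set $a(s)=\nabla_v\mathcal{X}(s)-sI$ and $b(s)=\nabla_v\mathcal{V}(s)-I$. These satisfy
$$a'=b,\qquad b'=\nabla_xE(s,\mathcal{X}(s))\,(sI+a),\qquad a(t)=b(t)=0.$$
Integrating backward gives
$$|b(s)|\le\int_s^t \eps\jbrac{\tau}^{-3}\ln\jbrac{\tau}\,(\tau+|a(\tau)|)\,d\tau$$
and $|a(s)|\le\int_s^t|b|$. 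A Gronwall or bootstrap argument then yields $|b(s)|\les \eps$ and $|a(s)|\les \eps\int_s^t \jbrac{\tau}^{-1}\ln\jbrac{\tau}\,d\tau\les\eps\ln^2\jbrac{t}$. At $s=0$ this gives $|\nabla_v\mathcal{X}(0)|\les \ln^2\jbrac{t}$ and $|\nabla_v\mathcal{V}(0)|\les 1$.

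The main obstacle is the step just described: we must avoid the naive $O(t)$ bound by working in the free-transport frame. The $\tau\nabla_xE\sim\tau^{-2}\ln\tau$ forcing is integrable once and produces $\ln^2$ after the second integration, which is the source of the $\ln^2$ factor. Everything else is bookkeeping with Lemma \ref{gcharbds}, and the small constant $\eps$ together with the $\|f_0\|_{W^{1,\infty}}$ factor (no smallness needed) gives the stated estimate.
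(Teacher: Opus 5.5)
Your proposal is correct and follows essentially the same route as the paper. Both redo the Gronwall argument of Proposition \ref{geoflow} for the profile characteristics, linearized around the free profile Jacobian $sI$, to get $|\nabla_v\mathcal{X}(0)|\les\eps\ln^2\jbrac{t}$. Both then combine the chain rule with the support bounds of Lemma \ref{gcharbds}, which gives $1+3+2$ logarithms in $L^1$ and $3$ in $L^\infty$.

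One small correction: feeding $|a(\tau)|\les\eps\ln^2\jbrac{t}$ back into $b'=\nabla_xE\,(sI+a)$ only yields $|\nabla_v\mathcal{V}(0)-I|\les\eps+\eps^2\ln^2\jbrac{t}$, not $\les\eps$ uniformly in $t$. The paper correspondingly records only $|\nabla_v\mathcal{V}(0)|\les\ln^2\jbrac{t}$. This still fits within the $\ln^2\jbrac{t}$ budget, so your conclusion is unaffected.
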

\begin{proof}
    First we need some estimates on the characteristics of $g$, recall (\ref{gcharrep}). We use a nearly identical approach as in the proof of Proposition \ref{geoflow}. Using Taylor's theorem, 
    \begin{align*}
        \nabla_v \mathcal{X}(s) &= \nabla_v\mathcal{X}(t) + (s-t)\nabla_v \mathcal{V}(t) + \int_s^t  (\tau-s)\nabla_v\mathcal{X}(\tau)\nabla_x E(\tau,\mathcal{X}(\tau))d\tau\\
        &= tI + (s-t)I + \int_s^t (\tau-s)\nabla_v\mathcal{X}(\tau)\nabla_x E(\tau,\mathcal{X}(\tau))d\tau.
    \end{align*}
    Therefore, 
    \begin{align*}
        |\nabla_v \mathcal{X}(s) - sI| &\les \eps_0^2\int_s^t \tau(\tau-s) \jbrac{\tau}^{-3}\ln\jbrac{\tau} d\tau + \eps_0^2\int_s^t (\tau-s) \jbrac{\tau}^{-3}\ln\jbrac{\tau} |\nabla_v \mathcal{X}(\tau)-\tau I|d\tau.
    \end{align*}
    Just as in the proof of Proposition \ref{geoflow}, Gronwall yields
    \begin{align*}
        |\nabla_v \mathcal{X}(s) - sI| &\les \eps_0^2 \int_s^t \tau(\tau-s)\jbrac{\tau}^{-3}\ln\jbrac{\tau} d\tau \les \eps_0^2\ln^2\jbrac{t}.
    \end{align*}
    Hence,
    $$|\nabla_v \mathcal{X}(0)|\les \eps_0^2 \ln^2\jbrac{t}, \quad |\nabla_v \mathcal{V}(0)|\les 1 + \int_0^t \eps_0^2 \ln^2\jbrac{\tau}\jbrac{\tau}^{-3}\ln\jbrac{\tau}d\tau \les \ln^2\jbrac{t}.$$
    In an identical manner we can show
    $$|\nabla_x \mathcal{X}(0)| + |\nabla_x \mathcal{V}(0)|\les 1.$$
    
    Now we turn to the weighted estimate. We have, with $\p$ standing in for either $\p_{x_i}$ or $\p_{v_i}$,
    $$\p g(t) = \nabla_x g_0(t,\mathcal{X}(0),\mathcal{V}(0)) \cdot \p\mathcal{X}(0) + \nabla_v g_0(t,\mathcal{X}(0),\mathcal{V}(0)) \cdot \p\mathcal{V}(0).$$
    Bounding this using the estimates just proved and using Lemma \ref{gcharbds} gives
    $$\|\jbrac{x}\nabla_{x,v}g(t)\|_{L^\infty_{x,v}}\les \|f_0\|_{W^{1,\infty}_{x,v}}\ln^3\jbrac{t}, \quad \|\jbrac{x}\nabla_{x,v}g(t)\|_{L^1_{x,v}}\les \|f_0\|_{W^{1,\infty}_{x,v}}\ln^6\jbrac{t}.$$
\end{proof}

\textbf{a. Convergence of $m(t)$ to $m_\infty$.} Now we show the uniform convergence of $m$. It suffices to prove that $\p_t m$ is integrable in time. Using the equation for $g$, integrating by parts in $x$ yields
\begin{align*}
    \p_t m(t,v) &= \int E(t,x+vt)\cdot(\nabla_v - t\nabla_x)g(t,x,v) dx\\
    &= \int E(t,x+vt) \cdot\nabla_v g + t\nabla_x E(t,x+vt)\cdot \nabla_x g \ dx,
\end{align*}
so that Lemmas \ref{gcharbds} and \ref{vlasovweightedderiv} give
\begin{align*}
    \|\p_t m(t)\|_{L^1_{v}\cap L^\infty_{v}}  &\leq \|E(t)\|_{L^\infty} \|\nabla_v g(t)\|_{(L^1_v\cap L^\infty_v)L^1_x} + t\|\nabla_x E(t)\|_{L^\infty}\|\nabla_x g\|_{(L^1_v\cap L^\infty_v)L^1_x} \\
    &\les \eps_0^2\|f_0\|_{W^{1,\infty}_{x,v}} \jbrac{t}^{-2} \ln^9\jbrac{t}.
\end{align*}
Therefore, by the fundamental theorem of calculus there exists a limit $\tilde{m}_\infty(v)=\lim_{t\to\infty}m(t,v)$ in $L^1_v L^\infty_v$, with
$$\|m(t)-\tilde{m}_\infty\|_{L^1_v\cap L^\infty_v}\les\eps_0^2 \|f_0\|_{W^{1,\infty}_{x,v}} \jbrac{t}^{-1} \ln^9\jbrac{t}.$$
Moreover, by uniqueness of limits, $m_\infty = \tilde{m}_\infty$.\\

\textbf{b. Asymptotic formula for $\rho$.} The change of variables $y=x-vt$ yields
    \begin{equation*}
        \rho(t,x) = \int g(t,x-vt,v)dv = t^{-3}\int g(t,y,\tfrac{x-y}{t})dy,
    \end{equation*}
    so that
    \begin{align*}
        |\rho(t,x)-t^{-3}m(t,\tfrac{x}{t})| &\leq t^{-3}\int |g(t,y,\tfrac{x-y}{t})-g(t,y,\tfrac{x}{t})| dy\\
        &\leq t^{-4}\int \int_0^1 |y||\nabla_v g(t,y,\tfrac{x-y}{t}+\tfrac{\sigma y}{t})| d\sigma dy.
    \end{align*}
    For the $L^\infty$ estimate, we use the convergence of $m(t)$ to $m_\infty$ and Lemma \ref{vlasovweightedderiv} to obtain 
    $$\|\rho(t,x)-t^{-3}m_\infty(\tfrac{x}{t})\|_{L^\infty_x} \les  t^{-4} \||x|\nabla_v g\|_{L^\infty_{x,v}} + \eps_0^2 t^{-4}\ln^9\jbrac{t} \les \|f_0\|_{W^{1,\infty}_{x,v}} t^{-4}
    \ln^9\jbrac{t}.$$
    Meanwhile, for the $L^1$ estimate we integrate in $x$, make the change of variables $u=x/t$, and use Lemma \ref{vlasovweightedderiv} to obtain
    \begin{align*}
        \|\rho(t,x)-t^{-3}m(t,\tfrac{x}{t})\|_{L^1_{x,v}} &\leq t^{-1}\iint \int_0^1 |y||\nabla_v g(t,y,u+(1-\sigma)\tfrac{y}{t})|d\sigma du dy\\
        &= t^{-1}\iint |y||\nabla_v g(t,y,u)|dudy\\
        &\les \|f_0\|_{W^{1,\infty}_{x,v}} \jbrac{t}^{-1}\ln^6\jbrac{t}.
    \end{align*}
    Finally,
    \begin{align*}
        \|t^{-3}m_\infty(\tfrac{x}{t})-t^{-3}m(t,\tfrac{x}{t})\|_{L^1_x} = \|m_\infty - m(t)\|_{L^1}\les \eps_0^2\|f_0\|_{W^{1,\infty}_{x,v}}\jbrac{t}^{-1}\ln^9\jbrac{t}.
    \end{align*}

\textbf{c. Convergence to $f_\infty$.} Finally, we show the convergence to $f_\infty$ along the modified trajectories. The analysis here is very similar to \cite{ionescu_asymptotic_2022}, \cite{pankavich_asymptotic_2022}. We look for $h(t,x,v) = g(t,\tilde{x},v)$, where $\tilde{x} = x+\ln(t)E_\infty(v)$, which, after a straightforward calculation, leads to the equation 
$$\p_t h(t,x,v) = E(t,\tilde{x}+vt)\cdot (\nabla_v g)(t,\tilde{x},v) + \ip{ t^{-1} E_\infty(v) - tE(t,\tilde{x}+vt)}\cdot(\nabla_x g)(t,\tilde{x},v).$$
Then we define $\tilde{f}_{\infty}(x,v) = \lim_{t\to\infty} h(t,x,v)$, which we construct by integrating the equation for $\p_t h$:
$$\tilde{f}_{\infty}(x,v) = g(1,x,v) + \int_1^\infty E(t,\tilde{x}+vt)\cdot (\nabla_v g)(t,\tilde{x},v) + \ip{ t^{-1} E_\infty(v) - tE(t,\tilde{x}+vt)}\cdot(\nabla_x g)(t,\tilde{x},v) dt.$$

To prove the integrand above is integrable in time, we use that $(x,v)\mapsto (\tilde{x},v)$ is volume preserving, so Lemma \ref{vlasovweightedderiv} gives
\begin{align*}
     \|E(t,\tilde{x}+vt)\cdot (\nabla_v g)(t,\tilde{x},v)\|_{L^1_{x,v}\cap L^\infty_{x,v}} &\leq \|E(t)\|_{L^\infty}\|\nabla_v g(t)\|_{L^1_{x,v}\cap L^\infty_{x,v}} \\
     &\les \eps_0^2 \|f_0\|_{W^{1,\infty}_{x,v}}\jbrac{t}^{-2}\ln^6\jbrac{t}.
\end{align*}

To estimate the second term, we use Proposition \ref{Eerrest} and Lemma \ref{vlasovweightedderiv}:
\begin{align*}
    \Big\|\big( t^{-1} E_\infty(v) - tE(t,\tilde{x}+vt) \big)\cdot(\nabla_x g)(t,\tilde{x},v) \Big\|_{L^1_{x,v}\cap L^\infty_{x,v}}
    &\les \eps_0^2 t^{-1-\delta}\|  \jbrac{x}^{2\delta} g(t,\tilde{x},v)\big\|_{L^1_{x,v}\cap L^\infty_{x,v}}\\
    &\les \eps_0^2 \|f_0\|_{W^{1,\infty}_{x,v}} t^{-1-\delta}\ln^6\jbrac{t}.
\end{align*}

This is integrable and therefore $\tilde{f}_\infty$ is well-defined and

$$ \|f(t,x+vt+\ln(t)E_\infty(v),v) - \tilde{f}_\infty(x,v)\|_{L^1_{x,v}\cap L^\infty_{x,v}} \les \eps_0^2 \|f_0\|_{W^{1,\infty}_{x,v}} \jbrac{t}^{-\delta}\ln^6\jbrac{t}.$$

As usual, because the condition on $\delta$ is an open condition, we may freely absorb the logarithmic factors. Finally, by uniqueness of limits, $\tilde{f}_\infty = f_\infty$.

\printbibliography

\end{document}